\DeclareMathAlphabet{\pazocal}{OMS}{zplm}{m}{n}
\newcommand{\R}{\mathbb{R}}
\renewcommand{\H}{\mathbb{H}}
\newcommand{\U}{\pazocal{U}}
\newcommand{\K}{\pazocal{K}}
\newcommand{\M}{\pazocal{M}}
\newcommand{\Hpazo}{\pazocal{H}}
\newcommand{\Npazo}{\pazocal{N}}
\newcommand{\Lpazo}{\pazocal{L}}
\newcommand{\Ppazo}{\pazocal{P}}
\newcommand{\Cpazo}{\pazocal{C}}
\newcommand{\Lcal}{\mathcal{L}}
\newcommand{\Pcal}{\mathcal{P}}
\newcommand{\Ccal}{\mathcal{C}}
\newcommand{\Ncal}{\mathcal{N}}
\newcommand{\Ucal}{\mathcal{U}}
\newcommand{\Id}{\textnormal{Id}}
\newcommand{\D}{\textnormal{D}}
\newcommand{\Tan}{\textnormal{Tan}}
\newcommand{\Var}{\textnormal{Var}}
\newcommand{\supp}{\textnormal{supp}}
\newcommand{\sign}{\textnormal{sign}}
\newcommand{\Lip}{\textnormal{Lip}}
\newcommand{\Graph}{\textnormal{Graph}}
\newcommand{\loc}{\textnormal{loc}}
\newcommand{\Hess}{\textnormal{Hess} \,}
\newcommand{\xb}{\boldsymbol{x}}
\newcommand{\yb}{\boldsymbol{y}}
\newcommand{\rb}{\boldsymbol{r}}
\newcommand{\sbold}{\boldsymbol{s}}
\newcommand{\pb}{\boldsymbol{p}}
\newcommand{\ub}{\boldsymbol{u}}
\newcommand{\vb}{\boldsymbol{v}}
\newcommand{\wb}{\boldsymbol{w}}
\newcommand{\hb}{\boldsymbol{h}}
\newcommand{\Ab}{\boldsymbol{A}}
\newcommand{\Bb}{\boldsymbol{B}}
\newcommand{\Cb}{\boldsymbol{C}}
\newcommand{\Lb}{\boldsymbol{L}}
\newcommand{\Kb}{\boldsymbol{K}}
\newcommand{\Fb}{\boldsymbol{F}}
\newcommand{\Db}{\textnormal{\textbf{D}}}
\newcommand{\Bphi}{\boldsymbol{\phi}}
\newcommand{\Bvarphi}{\boldsymbol{\varphi}}
\newcommand{\Bnu}{\boldsymbol{\nu}}
\newcommand{\Bsigma}{\boldsymbol{\sigma}}
\newcommand{\INTDom}[3]{\int_{#2} #1 \textnormal{d} #3}
\newcommand{\INTSeg}[4]{\int_{#3}^{#4} #1 \textnormal{d} #2}
\newcommand{\NormL}[3]{\parallel \hspace{-0.1cm} #1 \hspace{-0.1cm} \parallel _ {L^{#2}(#3)}}
\newcommand{\NormC}[3]{\left\| #1  \right\| _ {C^{#2}(#3)}}
\newcommand{\Norm}[1]{\parallel \hspace{-0.1cm} #1 \hspace{-0.1cm} \parallel}
\newcommand{\derv}[2]{\frac{\textnormal{d} #1}{ \textnormal{d} #2}}
\newcommand{\tderv}[2]{\tfrac{\textnormal{d} #1}{ \textnormal{d} #2}}
\newcommand{\BGrad}[1]{\mathbf{Grad}_{#1} \,}
\newcommand{\BHess}[1]{\mathbf{Hess}_{#1} \,}
\newcommand{\textbn}[1]{\textnormal{\textbf{#1}}}
\newtheorem{rmk}{Remark}
\newtheorem{lem}{Lemma}
\newtheorem{Def}{Definition}
\newtheorem{thm}{Theorem}
\newtheorem{prop}{Proposition}
\newtheorem{cor}{Corollary}
\newenvironment{taggedhyp}[1]
    {\taggedhypx}
    {\endtaggedhypx}
\newenvironment{taggedhypsing}[1]
    {\taggedhypsingx}
    {\endtaggedhypsingx}
\title{Intrinsic Lipschitz Regularity of Mean-Field Optimal Controls}
\author{Benoît Bonnet\footnote{CNRS,  IMJ-PRG,  UMR  7586,  Sorbonne  Université, 4  place  Jussieu,  75252  Paris,  France. \hfill \textit{E-mail}: \texttt{benoit.bonnet@imj-prg.fr}} , Francesco Rossi\footnote{Dipartimento di Matematica ``Tullio Levi-Civita'', Università degli Studi di Padova, 63 via Trieste, Padova, Italy. \textit{E-mail}: \texttt{francesco.rossi@math.unipd.it}}}
\begin{document}

\maketitle

\begin{abstract}
In this article, we provide sufficient conditions under which the controlled vector fields solution of optimal control problems formulated on continuity equations are Lipschitz regular in space. Our approach involves a novel combination of mean-field approximations for infinite-dimensional multi-agent optimal control problems, along with a careful extension of an existence result of locally optimal Lipschitz feedbacks. The latter is based on the reformulation of a coercivity estimate in the language of Wasserstein calculus, which is used to obtain uniform Lipschitz bounds along sequences of approximations by empirical measures.
\end{abstract}

{\small
\textbf{Keywords.} Multi-Agent Systems, Mean-Field Optimal Control, Regularity of Minimisers, Wasserstein Calculus}

\medskip

{\small
\textbf{MSC2020 Subject Classification.} 35B65, 49J20, 49J30, 49Q22, 58E25, 93A16
}


\section{Introduction}

The mathematical analysis of collective behaviours in large systems of interacting agents has received an increasing attention from several communities during the past decades. Multi-agent systems are ubiquitous in applications ranging from aggregation phenomena in biology \cite{Camazine2001} to the understanding of crowd motion \cite{CPT}, animal flocks \cite{CS1} and swarms of autonomous vehicles \cite{Bullo2009}. While the first studies on multi-agent systems were formulated in a graph-theoretic framework (see e.g. \cite{Egerstedt2010} and references therein), several models now rely on continuous-time dynamical systems to depict this type of dynamics. In this context, a multi-agent system is usually described by a family of coupled ordinary differential equations (ODEs for short) of the form
\begin{equation}
\label{eq:IntroODE}
\dot x_i(t) = \vb_N(t,\xb(t),x_i(t)), 
\end{equation}
where $\xb = (x_1,\dots,x_N)$ denotes the state of all the agents and $\vb_N : [0,T] \times (\R^d)^N \times \R^d \rightarrow \R^d$ is a \textit{non-local velocity field} depending both on the running agent and on the whole state of the system (see e.g. \cite{bellomo,CPT,CS1}). However general and useful, these models are generically not the most powerful ones when it comes to capturing the global features of a multi-agent system. Besides, their intrinsic dependence on the number $N \geq 1$ of agents makes most of the classical computational approaches practically intractable for realistic scenarios. 

One of the most natural ideas to circumvent this model limitation is to approximate the large system of coupled ODEs written in \eqref{eq:IntroODE} by a single infinite-dimensional dynamics via a process called \textit{mean-field limit} (see e.g. the survey \cite{golse}). In this setting, the agents are supposed to be indistinguishable, and the assembly of particles is described by means of its \textit{spatial density} $\mu(\cdot)$, which is represented by a measure. The evolution through time of this global quantity is then prescribed by a \textit{non-local continuity equation} of the form 
\begin{equation}
\label{eq:IntroPDE}
\partial_t \mu(t) + \nabla \cdot ( v(t,\mu(t),\cdot)\mu(t)) = 0.
\end{equation}
Such a macroscopic approach has been successfully used e.g. to model pedestrian dynamics and biological systems \cite{Camazine2001,CPT}, as well as to transpose the study of classical patterns such as consensus or flocking formation to the mean-field setting \cite{Carrillo2010,HaLiu}. These research endeavours have hugely benefited from the recent progresses made in the theory of \textit{optimal transportation}, for which we point to the reader to the monographs \cite{AGS,OTAM,villani1}. 

More recently, the problem of controlling multi-agent systems in order to promote a desired behaviour or configuration became relevant in a growing number of applications. Motivated by implementability and efficiency, many contributions have therefore aimed at generalising relevant notions of control theory to PDEs of the form \eqref{eq:IntroPDE} serving as mean-field approximations of the discrete system \eqref{eq:IntroODE}. The resulting class of \textit{controlled continuity equations} are usually written as
\begin{equation}
\label{eq:IntroPDEControl}
\partial_t \mu(t) + \nabla \cdot \big( (v(t,\mu(t),\cdot) + u(t,\cdot))\mu(t) \big) = 0.
\end{equation}
While a few articles have been dealing with controllability results \cite{Duprez2019,Duprez2020} or explicit syntheses of control laws \cite{Caponigro2015,ControlKCS}, the major part of the literature has been focusing on \textit{mean-field optimal control problems}, with contributions ranging from existence results \cite{Fornasier2019,FornasierPR2014,Fornasier2014} to first-order optimality conditions \cite{MFPMP,PMPWassConst,SetValuedPMP,PMPWass,Cavagnari2018,Cavagnari2020,Jimenez2020,Pogodaev2016} and numerical methods \cite{achdou2,Burger2020}. 

One of the distinctive features of continuity equations is that they require fairly restrictive regularity assumptions to be classically well-posed. While \eqref{eq:IntroPDEControl} makes sense whenever the drift and control are measurable and satisfy some integrability bounds, the associated notion of so-called \textit{superposition solution} (see e.g. \cite[Theorem 8.2.1]{AGS}) is relatively weak and of limited practical use. In \cite{AmbrosioPDE,DiPernaLions}, a theory of well-posedness was developed for continuity equations with Sobolev and $BV$ velocity fields. However powerful and general, the latter has not yet been generalised to non-local driving fields, and is inherently restricted to measures which are absolutely continuous with respect to the ambient Lebesgue measure. Up to now, the only identified setting in which a strong form of classical well-posedness (see Theorem \ref{thm:Classical_Wellposedness} below) holds for arbitrary measure curves solution of \eqref{eq:IntroPDEControl} is that of \textit{Cauchy-Lipschitz} regularity (see e.g. \cite[Section 3]{AmbrosioC2014}). In this framework, solutions of non-local continuity equations exist, are unique, and stability estimates are available both with respect to the initial data and the velocity fields, see e.g. \cite{BonnetF2021,Pedestrian}. This latter fact is highly relevant to our purpose, since optimal control problems formulated on continuity equations are frequently studied in an ``optimise-then-discretise'' spirit. Indeed, the main desirable property of a control law designed for the kinetic model \eqref{eq:IntroPDEControl} is to provide a strategy which can be in turn applied -- either exactly or approximately -- to finite-dimensional systems of the form \eqref{eq:IntroODE}. As the infinite-dimensional strategy is not strictly optimal for the discrete multi-agent system in general, one would also like to have access to quantitative error estimates between the true solution and the approximate one. From a computational standpoint, Cauchy-Lipschitz regularity is also relevant to ensure the well-posedness of numerical methods such as semi-Lagrangian schemes (see e.g. \cite{Carlini2014,Chang1970}), as well as to prevent the apparition of Lavrentiev-type instabilities in the context of optimal control (see e.g. \cite{Lavrentiev1927}). For all these reasons, a wide portion of the literature  of mean-field control has been dealing with problems in which one imposes an a priori Lipschitz-in-space regularity on the admissible controls (see e.g. \cite{PMPWassConst,PMPWass,SparseJQMF,FornasierPR2014,Fornasier2014,ControlKCS}), or at least some continuity assumptions on the driving fields (see \cite{Cavagnari2018,Cavagnari2020,Jimenez2020}). A natural question is then to ask whether such regularity property may hold intrinsically or not. In this paper, we investigate this problem in the setting of mean-field optimal control problems formulated on the controlled dynamics \eqref{eq:IntroPDEControl}. 

It is well-known that solutions of optimal control problems in Wasserstein spaces need not be regular in general. Indeed, there exists a vast literature devoted to the study of the regularity properties of solutions to the optimal transport problem in Monge formulation (see e.g. \cite{DePhillipis2013,Figalli2013} for some of the farthest-reaching contributions on this topic), mostly via PDE techniques. However, few of these results can be translated into regularity properties on the optimal tangent velocity field $v^*(\cdot,\cdot)$ solving the Benamou-Brenier problem
\begin{equation*}
(\Ppazo_{\textnormal{BB}}) ~~ 
\left\{ 
\begin{aligned}
\min_{v \in L^2} & \left[ \INTSeg{\INTDom{|v(t,x)|^2}{\R^d}{\mu(t)(x)}}{t}{0}{T} \right] \\
\text{s.t.} & \left\{
\begin{aligned}
& \partial_t \mu(t) + \nabla \cdot (v(t,\cdot)\mu(t)) = 0, \\
& \mu(0) = \mu^0 ~~~\text{and}~~~ \mu(T) = \mu^1.
\end{aligned}
\right.
\end{aligned}
\right.
\end{equation*}
For the optimal control problem $(\Ppazo_{\textnormal{BB}})$, it can be shown for instance by building on \cite[Theorem 12.50]{villani1}) that $v(t,\cdot) \in C^{k+1,\alpha}_{\loc}(\R^d,\R^d)$ whenever $\mu^0,\mu^1 \in \Pcal^{ac}(\R^d)$ have densities with respect to the ambient Lebesgue measure with regularity at least $C^{k,\alpha}_{\loc}(\R^d,\R^d)$ for some $\alpha > 0$, and that their supports are smooth and convex. Another context in which the regularity of mean-field optimal controls has been (indirectly) investigated is that of \textit{mean-field games} (see \cite{Huang2006,Lasry2007}). Indeed, there is a large literature dedicated to the regularity of the value function $(t,x) \mapsto V^*(t,x) \in \R$ solving the backward Hamilton-Jacobi equation of the coupled system
\begin{equation*}
\left\{
\begin{aligned}
& \partial_t V(t,x) + H(t,x,\nabla_x V(t,x)) = f(t,x,\mu(t)), \hspace{0.31cm} V(T,x) = g_T(x,\mu(T)), \\
& \partial_t \mu(t) - \nabla \cdot \left( \nabla_p H(t,x,\nabla_x V(t,x)) \mu(t) \right) = 0, \hspace{0.45cm} \mu(0) = \mu^0.
\end{aligned}
\right.
\end{equation*}
In the setting of \textit{variational mean-field games}, the velocity field $v^*(t,x) = -\nabla_p H(t,x,\D_x V^*(t,x))$ is the optimal control associated to a mean-field optimal control problem. Therefore, $v^*(t,\cdot)$ is expected to have a regularity with one order of differentiation fewer than the value function. We refer the reader e.g. to \cite{Cardaliaguet2018} for Sobolev regularity results on the value function $V^*(\cdot,\cdot)$ and to \cite{Cardaliaguet2012} for H\"older regularity (see also Remark \ref{rmk:MFG} below).  

\bigskip

In this article, we investigate the intrinsic Lipschitz regularity with respect to the space variable of the solutions of general mean-field optimal control problems of the form
\begin{equation*}
(\Ppazo) ~~ \left\{
\begin{aligned}
\min_{u(\cdot) \in \U} & \left[ \INTSeg{ \left( L(t,\mu(t)) + \INTDom{\psi(u(t,x))}{\R^d}{\mu(t)(x)} \right)}{t}{0}{T} + \varphi(\mu(T)) \right] \\
\text{s.t.} & \left\{
\begin{aligned}
& \partial_t \mu(t) + \nabla \cdot \big( (v(t,\mu(t),\cdot) + u(t,\cdot))\mu(t) \big) = 0, \\
& \mu(0) = \mu^0.  
\end{aligned}
\right.
\end{aligned}
\right.
\end{equation*}
The set of admissible controls for $(\Ppazo)$ is defined by $\U = L^{\infty}([0,T],L^1(\R^d,U;\mu(t)))$, where $U \subset \R^d$ is a convex and compact set, and $\mu^0 \in \Pcal_c(\R^d)$ is a fixed initial datum. Remark that, since we do not impose any a priori regularity assumptions on the control vector fields $u : [0,T] \times \R^d \rightarrow U$, there may exist no solutions to the non-local transport equation \eqref{eq:IntroPDEControl} driving problem $(\Ppazo)$. Moreover even when solutions do exist, they may not be classically well-posed and defined in a weak sense only. 

The first main contribution of this manuscript is the following existence result of intrinsically Lipschitz-in-space mean-field optimal controls for $(\Ppazo)$.
\begin{thm}[Existence of Lipschitz-in-space solutions for $(\Ppazo)$]
\label{thm:MainResult1}
Let $\mu^0 \in \Pcal_c(\R^d)$ and assume that hypotheses \ref{hyp:H} of Section \ref{section:Fornasier2014} below hold. Moreover, suppose that the control cost $\psi(\cdot)$ is \textnormal{strongly convex} with a constant $\lambda_{\psi} > \lambda_{(\Ppazo)} \geq 0$, where $\lambda_{(\Ppazo)} \geq 0$ only depends on $\supp(\mu^0),T$ and on the $C^2$-norm with respect to the measure and space variables of the dynamics and cost functionals of $(\Ppazo)$.

Then, there exists a constant $\Lpazo_U > 0$ and a trajectory-control pair $(\mu^*(\cdot),u^*(\cdot,\cdot)) \in \Lip([0,T],\Pcal_c(\R^d)) \times \U$ which is optimal for $(\Ppazo)$ such that $x \in \R^d \mapsto u^*(t,x) \in U$ is $\Lpazo_U$-Lipschitz for $\Lcal^1$-almost every $t \in [0,T]$.
\end{thm}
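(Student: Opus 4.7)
The plan is to attack the theorem via a mean-field approximation procedure. First I would discretise the initial datum by a sequence of empirical measures $\mu_N^0 = \tfrac{1}{N} \sum_{i=1}^N \delta_{x_i^0}$ with $W_1(\mu_N^0, \mu^0) \to 0$, and associate to each $N$ the classical finite-dimensional optimal control problem $(\Ppazo_N)$ posed on $(\R^d)^N$, whose dynamics and cost are obtained by evaluating the functionals of $(\Ppazo)$ along empirical measures. This reduces the intrinsic regularity question to a quantitative uniform estimate on a sequence of finite-dimensional problems.

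For each fixed $N$, problem $(\Ppazo_N)$ is a $C^2$-regular optimal control problem over the compact convex set $U^N$ with a strongly convex running cost. A suitable extension (in the spirit of \cite{FornasierPR2014}) of known Lipschitz-feedback existence results would then yield an optimal pair $(\xb_N^*(\cdot), u_N^*(\cdot, \cdot))$ such that $x \mapsto u_N^*(t,x)$ is Lipschitz. The heart of such a result is to linearise the Pontryagin optimality system along $\xb_N^*(\cdot)$ and to invert the second variation operator via a Riccati-type computation; strong convexity of $\psi$ then yields Lipschitz regularity of the feedback, with a constant dictated by the gap between $\lambda_\psi$ and the coercivity loss produced by the non-local dynamics, the running cost $L$ and the terminal cost $\varphi$.

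The main obstacle, and the only step that really requires work, is to make this coercivity estimate uniform in $N$: a direct finite-dimensional implementation yields a threshold and a Lipschitz constant that both a priori depend on $N$ through the dimension of $(\R^d)^N$. To overcome this, I would recast the whole second-variation computation in intrinsic Wasserstein terms. The relevant curvature contributions would be expressed through the Wasserstein gradient and Hessian $\BGrad{\mu}$, $\BHess{\mu}$ of $L(t,\cdot)$ and $\varphi(\cdot)$, together with the $C^2$-norms of $v(t,\cdot,\cdot)$ and $\psi(\cdot)$ with respect to both the space and measure variables. Since all these quantities are intrinsic to $(\Ppazo)$ and independent of the number of agents, the resulting threshold $\lambda_{(\Ppazo)}$ depends only on $\supp(\mu^0)$, $T$ and on the data of $(\Ppazo)$, as asserted in the statement. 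Under the standing assumption $\lambda_\psi > \lambda_{(\Ppazo)}$, the Lipschitz constants $\Lpazo_U^N$ of the discrete feedbacks then admit a uniform bound $\Lpazo_U > 0$.

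With this uniform estimate at hand the passage to the limit is essentially routine. The empirical trajectories $\mu_N^*(\cdot)$ are equi-Lipschitz in $W_1$ since the drift $v(\cdot, \mu_N^*(\cdot), \cdot) + u_N^*(\cdot, \cdot)$ is uniformly bounded and compactly supported, so Arzelà--Ascoli provides a subsequence converging uniformly to some $\mu^*(\cdot) \in \Lip([0,T], \Pcal_c(\R^d))$. The uniform Lipschitz and $U$-valued bounds on $u_N^*$ let one also extract a limit $u^* \in \U$ which is $\Lpazo_U$-Lipschitz in space; combined with the Cauchy--Lipschitz stability theory recalled in Theorem \ref{thm:Classical_Wellposedness}, this identifies $(\mu^*(\cdot), u^*(\cdot, \cdot))$ as a solution of \eqref{eq:IntroPDEControl}. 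Its optimality follows from lower semicontinuity of the cost under $W_1$-convergence, together with the converse inequality obtained by approximating any admissible competitor for $(\Ppazo)$ by empirical measures and comparing with the minimum value of $(\Ppazo_N)$.
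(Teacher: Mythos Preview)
Your proposal is correct and follows essentially the same architecture as the paper: discretise by empirical measures, obtain Lipschitz feedbacks for each $(\Ppazo_N)$ via a second-variation/coercivity argument, make the coercivity threshold $N$-independent by rewriting it in Wasserstein terms, and pass to the limit. Two implementation details differ. First, the paper does not invoke a Riccati computation but instead imports the locally-optimal-feedback theorem of Dontchev \cite{Dontchev2019} (Theorem~\ref{thm:LipFeedback}), applied in the Hilbert space $((\R^d)^N,\langle\cdot,\cdot\rangle_N)$; the strong convexity of $\psi$ enters only through Proposition~\ref{prop:SufficientCoercivity}, which converts it into the uniform coercivity hypothesis \ref{hyp:CO_N}. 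Second, for the limit passage the paper does not argue optimality by hand via lower semicontinuity and approximation of competitors, but rather invokes the $\Gamma$-convergence result of \cite{Fornasier2019} (Theorem~\ref{thm:Fornasier2014}) to produce an optimal generalised control $\Bnu^*$, and then identifies $\Bnu^* = u^*(\cdot,\cdot)\tilde\mu^*$ by showing that the Lipschitz feedbacks $u_N^*$ converge weakly in $L^2([0,T],W^{1,p})$ and that this forces weak-$^*$ convergence of $u_N^*\tilde\mu_N^*$ (Lemmas~\ref{lem:Compactness}--\ref{lem:GeneralisedControl_Convergence}). Your direct route would also work, but the paper's modular use of \cite{Fornasier2019} avoids redoing the $\Gamma$-$\limsup$ step.
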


The proof of this result is obtained by combining two fairly separated arguments. The first one is an existence result for mean-field optimal controls which was derived in \cite{Fornasier2019}, and recalled in Theorem \ref{thm:Fornasier2014} below. In the latter, it is proven under very general assumptions that, given a sequence $\mu^0_N := \tfrac{1}{N} \sum_{i=1}^N \delta_{x_i^0}$ converging in the $W_1$-metric towards $\mu^0$, there exist optimal solutions of problem $(\Ppazo)$ which can be recovered as $\Gamma$-limits in a suitable topology of sequences of solutions to the discrete problems
\begin{equation*}
\begin{aligned}
(\Ppazo_N) ~~ \left\{ 
\begin{aligned}
\min_{\ub(\cdot) \in \U_N} & \left[\INTSeg{\Bigg( \Lb_N(t,\xb(t)) + \frac{1}{N}\sum_{i=1}^N \psi(u_i(t)) \Bigg)}{t}{0}{T} + \Bvarphi_N(\xb(T)) \right] \\
\text{s.t.} & \left\{
\begin{aligned}
& \dot x_i(t) = \vb_N(t,\xb_N(t),x_i(t)) + u_i(t), \\
& x_i(0) = x_i^0 \in \R^d.
\end{aligned}
\right.
\end{aligned}
\right.
\end{aligned}
\end{equation*}
Here, $\U_N = L^{\infty}([0,T],U^N)$, and the functionals $\vb_N : [0,T] \times \R^d \times (\R^d)^N \rightarrow \R^d$, $\Lb_N : [0,T] \times (\R^d)^N \rightarrow \R$ and $\Bvarphi_N : (\R^d)^N \rightarrow \R$ are discrete approximations of $v(\cdot,\cdot,\cdot)$, $L(\cdot,\cdot)$ and $\varphi(\cdot)$ respectively (see Definition \ref{def:MF_Adapted} below). 

The second key component of our approach is a careful adaptation of a methodology recently developed in \cite{Cibulka2018,Dontchev2019} to the family of problems $(\Ppazo_N)$, which provides sufficient conditions for the existence of locally optimal Lipschitz feedbacks around solutions of optimal control problems. In the context of mean-field control problems, this part crucially relies on the following \textit{uniform mean-field coercivity estimate} 
\begin{equation*}
\begin{aligned}
\BHess{\xb} \Bvarphi_N[\xb_N^*(T)](\yb(T),\yb(T)) & - \INTSeg{\BHess{\xb} \H_N [t,\xb_N^*(t),\rb_N^*(t),\ub_N^*(t)](\yb(t) , \yb(t))}{t}{0}{T}
\\ 
& - \INTSeg{\BHess{\ub} \H_N [t,\xb_N^*(t),\rb_N^*(t),\ub_N^*(t)](\wb(t) , \wb(t))}{t}{0}{T} \geq \rho_T \INTSeg{|\wb(t)|_N^2}{t}{0}{T}, 
\end{aligned}
\end{equation*}
which holds along any \textit{optimal mean-field Pontryagin triple} $(\ub^*_N(\cdot),\xb^*_N(\cdot),\rb^*_N(\cdot))$ (see Proposition \ref{prop:MFPMP} below) for $(\Ppazo_N)$, where $\H_N : [0,T] \times (\R^{2d})^N \times U \rightarrow \R$ is the Hamiltonian of the discrete problem. In this context, the operator $\BHess{}[\bullet]$ stands for the restriction to empirical measures of the intrinsic Wasserstein Hessian bilinear form (see e.g. \cite{Chow2018}), whose construction is further detailed in Section \ref{section:Preliminary}. In essence, this uniform coercivity estimate allows one to invert the optimality conditions stemming from the application of the \textit{Pontryagin Maximum Principle} (PMP in the sequel) to $(\Ppazo_N)$, with a control on the Lipschitz constant of such inverse. The main subtlety here lies in the fact that we need these estimates to be uniform with respect to $N$, which would not be the case if we were to apply verbatim the results of \cite{Dontchev2019} to $(\Ppazo_N)$. For these reasons, we work with an adapted \textit{mean-field} PMP -- which is the discrete counterpart of the Wasserstein PMP studied in \cite{MFPMP,PMPWassConst,SetValuedPMP,PMPWass} --, and express the coercivity condition in terms of Wasserstein calculus. 

The combination of these two steps together with delicate projection arguments, we can build an optimal feedback $(t,x) \in [0,T] \times \R^d \mapsto u_N^*(t,x) \in U$ for $(\Ppazo_N)$ that is Lipschitz in $x \in \R^d$ uniformly with respect to $N \geq 1$. By standard compactness arguments (see e.g. \cite{MFPMP,Fornasier2014}), this allows us to obtain a result that is stronger than Theorem \ref{thm:MainResult1}, which is the second main contribution of this manuscript. 

\begin{thm}[Convergence of optimal Lipschitz feedbacks towards solutions of $(\Ppazo)$]
\label{thm:MainResult2}
Let $\mu^0 \in \Pcal_c(\R^d)$ and $(\mu^0_N) \subset \Pcal_c(\R^d)$ be a sequence of empirical measures with uniformly compact supports such that $W_1(\mu^0_N,\mu^0) \rightarrow 0$ as $N \rightarrow +\infty$. Suppose that hypotheses \ref{hyp:H} of Section \ref{section:Fornasier2014} below are satisfied, and that the \textnormal{mean-field coercivity estimate} \ref{hyp:CO_N} described in Section \ref{section:MainResult} holds along any optimal Pontryagin triple $(\xb^*_N(\cdot),\rb_N^*(\cdot),\ub^*_N(\cdot))$ for $(\Ppazo_N)$ defined in the sense of Proposition \ref{prop:MFPMP}.

Then, there exists a uniform constant $\Lpazo_U > 0$ depending only on the data of $(\Ppazo)$ and a sequence of trajectory-control pairs $(\mu_N^*(\cdot),u_N^*(\cdot,\cdot)) \subset \Lip([0,T],\Pcal_c(\R^d)) \times L^2([0,T],W^{1,\infty}(\R^d,U))$ such that the following holds.
\begin{enumerate}
\item[\textnormal{(a)}] For any $N \geq 1$ the pair $(\mu^*_N(\cdot),u_N^*(\cdot,\cdot))$ is optimal for $(\Ppazo_N)$, i.e. $\mu^*_N(t) = \tfrac{1}{N}\sum_{i=1}^N \delta_{x_i^*(t)}$ and $u_N^*(t,x_i^*(t)) = u_i^*(t)$ for $\Lcal^1$-almost every $t \in [0,T]$. 
\item[\textnormal{(b)}] The maps $x \in \R^d \mapsto u_N^*(t,x) \in U$ are $\Lpazo_U$-Lipschitz for $\Lcal^1$-almost every $t \in [0,T]$ and any $N \geq 1$. 
\item[\textnormal{(c)}] For every $p \in (d,+\infty)$, the cluster points of the sequence $(u_N^*(\cdot,\cdot))$ in the weak $L^2([0,T],W^{1,p}(\Omega,U))$-topology are optimal controls for $(\Ppazo)$ and are $\Lpazo_U$-Lipschitz in space.
\end{enumerate}
\end{thm}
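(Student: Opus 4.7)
The plan is to combine the $\Gamma$-convergence statement of Theorem \ref{thm:Fornasier2014} with a uniform-in-$N$ construction of Lipschitz optimal feedbacks for each discrete problem $(\Ppazo_N)$. For fixed $N \geq 1$, the starting point is the mean-field Pontryagin Maximum Principle of Proposition \ref{prop:MFPMP}, which associates to a minimiser an optimal triple $(\xb^*_N(\cdot),\rb^*_N(\cdot),\ub^*_N(\cdot))$ satisfying a forward-backward Hamiltonian system together with a pointwise maximisation condition on $\ub$. Hypothesis \ref{hyp:CO_N} states that the intrinsic Wasserstein Hessian of the augmented objective along feasible perturbations is coercive with a constant $\rho_T > 0$ that does \emph{not} depend on $N$; heuristically, this plays the role of the classical second-order sufficient condition in finite-dimensional optimal control.

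First I would adapt the inversion methodology of \cite{Cibulka2018,Dontchev2019} to the family $(\Ppazo_N)$. The coercivity estimate allows one to invert the Pontryagin optimality system via an implicit-function argument: small perturbations of the state $x_i$ produce correspondingly small perturbations of the subsequent optimal triple, with Lipschitz moduli controlled by $\rho_T^{-1}$ and by the uniform $C^2$-bounds on the data supplied by hypotheses \ref{hyp:H}. This yields a locally optimal feedback $\hat u_N^* : [0,T] \times (\R^d)^N \to U^N$ whose $i$-th component, evaluated along $\xb^*_N(t)$, recovers $u_i^*(t)$. The key subtlety is that the inversion must be carried out in a form that respects the symmetric, empirical-measure structure of $(\Ppazo_N)$; this is where rephrasing the coercivity in terms of the Wasserstein Hessians $\BHess{\xb}$ and $\BHess{\ub}$ pays off, because testing against perturbations $\yb,\wb$ measured in the rescaled empirical inner product $|\cdot|_N$ decouples the resulting estimate from the number of particles.

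Next, I would define a feedback on the finite support of $\mu_N^*(t)$ by setting $u_N^*(t,x_i^*(t)) := u_i^*(t)$ and extend it to the whole of $\R^d$ while preserving both the Lipschitz constant and the constraint $u \in U$. The classical McShane--Kirszbraun extension produces a Lipschitz map into $\R^d$, and post-composition with the nearest-point projection onto the convex compact set $U$ (which is $1$-Lipschitz) delivers $u_N^*(t,\cdot) \in \Lip(\R^d,U)$ with the same Lipschitz constant $\Lpazo_U$. Together with the identification $u_N^*(t,x_i^*(t)) = u_i^*(t)$, this establishes assertions (a) and (b).

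For (c), I would invoke Theorem \ref{thm:Fornasier2014} combined with the uniform Lipschitz bound. Since $u_N^*(t,\cdot)$ takes values in the compact set $U$ and is $\Lpazo_U$-Lipschitz, the sequence is bounded in $L^\infty([0,T],W^{1,\infty}(\Omega,U))$ for any compact $\Omega \subset \R^d$ containing the supports $\supp(\mu_N^*(t))$, which are uniformly bounded by Cauchy--Lipschitz estimates on non-local continuity equations. For $p \in (d,+\infty)$, weak compactness in $L^2([0,T],W^{1,p}(\Omega,U))$ then extracts a cluster point $u^*$, and the Morrey embedding $W^{1,p}(\Omega) \hookrightarrow C^{0,1-d/p}(\Omega)$ combined with an Ascoli-type argument in time upgrades this to strong convergence in $L^2([0,T],C^0(\Omega,U))$ along the subsequence. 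This strong convergence is enough to pass to the limit in the non-local continuity equation and in the cost of $(\Ppazo_N)$, and coupling this with Theorem \ref{thm:Fornasier2014} identifies $u^*$ as an optimal control for $(\Ppazo)$; the Lipschitz bound in turn is preserved by weak lower semicontinuity of the Lipschitz seminorm in $W^{1,p}$. The main obstacle is to make the Cibulka--Dontchev inversion genuinely $N$-uniform: a direct finite-dimensional application would yield Lipschitz constants degrading with $N$, and only the Wasserstein reformulation of the coercivity estimate prevents this blow-up.
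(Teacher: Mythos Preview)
Your overall architecture matches the paper's three-step proof, but there is a genuine gap in the passage from the $N$-particle Lipschitz feedback to a single-particle one. After the Cibulka--Dontchev inversion you obtain a map $\hat u_N^* : [0,T] \times (\R^d)^N \to U^N$ that is $\Lpazo_U'$-Lipschitz with respect to the rescaled norm $|\cdot|_N$, with $\Lpazo_U'$ independent of $N$. You then write ``define a feedback on the finite support of $\mu_N^*(t)$ by setting $u_N^*(t,x_i^*(t)) := u_i^*(t)$ and extend it to the whole of $\R^d$ while preserving the Lipschitz constant''. But which Lipschitz constant? To apply McShane on the finite set $\{x_1^*(t),\dots,x_N^*(t)\}$ you must first show that $|u_i^*(t)-u_j^*(t)| \leq L\,|x_i^*(t)-x_j^*(t)|$ with $L$ independent of $N$. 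This does \emph{not} follow from the $|\cdot|_N$-Lipschitz bound on $\hat u_N^*$, because $u_i^*(t)$ and $u_j^*(t)$ are two different components of the \emph{same} output vector $\hat u_N^*(t,\xb_N^*(t))$; the Lipschitz estimate only controls how the whole vector changes when the input changes.

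The paper closes this gap by a coordinate-freezing projection argument: for each $i$ one defines $\tilde u_i(t,x) := (\hat u_N^*)^i\big(t,\hat\xb_i^x(t)\big)$ where $\hat\xb_i^x(t)$ equals $\xb_N^*(t)$ with only the $i$-th entry replaced by $x$. Then
\[
|\tilde u_i(t,x)-\tilde u_i(t,y)| \leq \sqrt{N}\,\big|\hat u_N^*(t,\hat\xb_i^x(t))-\hat u_N^*(t,\hat\xb_i^y(t))\big|_N \leq \sqrt{N}\,\Lpazo_U'\,|\hat\xb_i^x(t)-\hat\xb_i^y(t)|_N = \sqrt{N}\,\Lpazo_U'\cdot \tfrac{1}{\sqrt{N}}|x-y|,
\]
and the $\sqrt{N}$ factors cancel. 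One then checks, using local optimality of the feedback, that the individual neighbourhoods $\Npazo_i$ of the graphs of $x_i^*(\cdot)$ can be taken disjoint (no collisions), patches the $\tilde u_i$ together, and only \emph{then} invokes McShane plus projection onto $U$. This is the missing idea in your outline.

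A secondary point: in part (c) you claim that weak $L^2([0,T],W^{1,p})$ convergence plus Morrey plus an ``Ascoli-type argument in time'' upgrades to strong $L^2([0,T],C^0)$ convergence. Without a time-derivative or equicontinuity bound on $t \mapsto u_N^*(t,\cdot)$ this does not follow. The paper avoids this by not seeking strong convergence of $u_N^*$ at all: it shows directly that the \emph{measures} $\Bnu_N^* = u_N^*\,\tilde\mu_N^*$ converge weak-$^*$ to $u^*\,\tilde\mu^*$, using Morrey to test the weak $W^{1,p}$ convergence against $\mu^*(t)$ and Kantorovich--Rubinstein duality (exploiting the uniform Lipschitz bound on $u_N^*$) to swap $\mu^*(t)$ for $\mu_N^*(t)$. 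This weak-$^*$ convergence is exactly what is needed to identify the limit with the generalised optimal control provided by Theorem \ref{thm:Fornasier2014}.
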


While they are more general than that of Theorem \ref{thm:MainResult1}, the statements of Theorem \ref{thm:MainResult2} are less intrinsic by nature, as they  rely on the mean-field coercivity estimate \ref{hyp:CO_N} which can only be formulated on the discrete approximations $(\Ppazo_N)$. For this reason, in Proposition \ref{prop:SufficientCoercivity} below, we show that the strong convexity assumption imposed on $\psi(\cdot)$ in Theorem \ref{thm:MainResult1} is in fact a sufficient condition for \ref{hyp:CO_N}. Hence, the statements of Theorem \ref{thm:MainResult1} -- which present the advantage of involving quantities which are intrinsic to $(\Ppazo)$ -- can be recovered as a direct corollary of Theorem \ref{thm:MainResult2}. 

\begin{rmk}[Comparison with related contributions in mean-field games]
\label{rmk:MFG}
It was recently brought to our attention that a result related to Theorem \ref{thm:MainResult1} and Theorem \ref{thm:MainResult2} above was derived in \cite{Gangbo2015}. In the latter, the authors show that the value function of a certain class of first-order mean-field games is continuously differentiable with Lipschitz derivative when the data are of class $C^3$ and the time horizon $T > 0$ is sufficiently small. These two requirement are very close to our standing assumptions. Indeed, we posit in hypotheses \ref{hyp:H} below that all our data are $C^{2,1}_{\loc}$, and it is illustrated in Section \ref{section:Discussion} that our uniform coercivity estimate \ref{hyp:CO_N} can be interpreted as a quantitative condition comparing the size of the time horizon $T>0$ relatively to other constants of the problem, and in particular with the semi-convexity constant of the cost functionals. The results of \cite{Gangbo2015} have then been extended in \cite{Mayorga2020} to broader classes of first-order mean-field games systems, and improved in the very recent \cite{Gangbo2020}, in which it is shown that the value function is regular when the small time horizon condition is replaced by the \textnormal{displacement convexity} (see e.g. \cite[Chapter 9]{AGS}) of the Lagrangian. Incidentally for mean-field control problems, this scenario is also contained in our main result Theorem \ref{thm:MainResult1}. Indeed, if the running cost of the problem is displacement convex, the final cost equal to zero, and the non-local dynamics reduced to a linear controlled vector-field, it can be shown that $\lambda_{(\Ppazo)} = 0$ and the controls are Lipschitz-regular in space whenever the control cost $\psi(\cdot)$ is strongly convex with constant $\lambda > 0$.

We also stress that the proof strategies developed in \cite{Gangbo2020,Gangbo2015,Mayorga2020} are fairly close to the one that we independently propose here, as they rely on the application of inverse function mappings to sequences of approximations by empirical measures, with a quantitative control on the Lipschitz constant of the inverse.
\end{rmk}

\smallskip

The structure of this article is the following. In Section \ref{section:Preliminary}, we recall several general prerequisites on measure theory and optimal transport. In particular in Section \ref{subsection:DiscreteMeasures}, we investigate in details the interplay between Wasserstein derivatives of functionals at empirical measures and classical derivatives of the discrete functionals which arguments are the corresponding support points. In Section \ref{section:FiniteDimOC}, we review notions pertaining to finite-dimensional optimal control problems, with a particular emphasis on Lipschitz feedbacks. We proceed by exposing in Section \ref{section:Fornasier2014} concepts dealing with continuity equations and mean-field optimal control problems, and move on to the proofs of our main results Theorem \ref{thm:MainResult1} and Theorem \ref{thm:MainResult2} in Section \ref{section:MainResult}. More precisely, in Section \ref{subsection:MainResult2}, we state the coercivity assumption \ref{hyp:CO_N} and use it to prove Theorem \ref{thm:MainResult2}. We then show in Section \ref{subsection:MainResult1} how the latter together with a standard convexity estimate for $C^2_{\loc}$-regular functions on convex compact sets allows to recover Theorem \ref{thm:MainResult1}. We conclude by providing in Section \ref{section:Discussion} an analytical example in which our coercivity estimate is both necessary and sufficient for the existence of Lipschitz-in-space mean-field optimal controls. 


\section{Preliminaries}
\label{section:Preliminary}

In this section, we introduce results and notations that we will use throughout the article. Section \ref{subsection:MeasureTheory} presents known results of analysis in measure spaces and optimal transport, while Section \ref{subsection:Wasserstein_Diff} deals with first- and second-order differential calculus in Wasserstein spaces. We introduce in Section \ref{subsection:DiscreteMeasures} the notion of \textit{mean-field approximating sequence}, along with a discretised counterpart of the Wasserstein calculus. 


\subsection{Analysis in measure spaces}
\label{subsection:MeasureTheory}

In this section, we introduce some classical notations and results of analysis in measure spaces and optimal transport. For these topics, we refer the reader to \cite{AmbrosioFuscoPallara} and \cite{AGS,OTAM,villani1} respectively.

We denote by $(\M(\R^d,\R^m),\Norm{\cdot}_{TV})$ the Banach space of $m$-dimensional vector-valued finite Radon measures defined on $\R^d$ endowed with the total variation norm, defined for any $\Bnu \in \M(\R^d,\R^m)$ by $\Norm{\Bnu}_{TV} \, := |\Bnu|(\R^d)$. Here, the total variation measure $|\Bnu| \in \M(\R^d,\R_+)$ associated to $\Bnu$ is given on any Borel set $B \subset \R^d$ by  
\begin{equation*}
|\Bnu|(B) = \sup \left\{ \mathsmaller{\sum}\limits_{k=1}^{+ \infty} |\Bnu(B_k)| ~\text{s.t. $B_k$ are disjoint Borel sets and } \mathsmaller{\bigcup}\limits_{k=1}^{+\infty} B_k = B \right\},
\end{equation*}
where $|\Bnu(B)|$ is the norm of the element $\Bnu(B) \in \R^m$. It is known by Riesz's Theorem (see e.g. \cite[Theorem 1.54]{AmbrosioFuscoPallara}) that $\M(\R^d,\R^m)$ can be identified with the topological dual of the Banach space $(C^0_0(\R^d,\R^m),\Norm{\cdot}_{C^0})$, which is the completion of the space $C^0_c(\R^d,\R^m)$ of continuous and compactly supported functions. The latter is endowed with the duality product 
\begin{equation}
\label{eq:DualityBracket_Measures}
\langle \Bnu , \phi \rangle_{C^0(\R^d,\R^m)} := \sum_{k=1}^m \INTDom{\phi_k(x)}{\R^d}{\nu_k(x)}, 
\end{equation}
defined for any $\Bnu \in \M(\R^d,\R^m)$ and $\phi \in C^0_c(\R^d,\R^m)$. Given a positive Borel measure $\nu \in \M(\R^d,\R_+)$ and an element $p \in [1,+\infty]$, the notations $L^p(\Omega,\R^m;\nu)$ and $W^{1,p}(\Omega,\R^m;\nu)$ stand for the spaces of $p$-integrable and Sobolev functions respectively. In the case where $\nu = \Lcal^d$ is the standard $d$-dimensional Lebesgue measure $\Lcal^d$, we simply denote these spaces by $L^p(\Omega,\R^m)$ and $W^{1,p}(\Omega,\R^m)$.

We use the notation $\Pcal(\R^d) \subset \M(\R^d,\R_+)$ for the space of \textit{Borel probability measures}, and given $p \geq 1$, we denote by $\Pcal_p(\R^d)$ the subset of $\Pcal(\R^d)$ of measures having finite $p$-th moment, i.e. 
\begin{equation*}
\Pcal_p(\R^d) = \Big\{ \mu \in \Pcal(\R^d) ~\text{s.t.}~ \mathsmaller{\INTDom{|x|^p}{\R^d}{\mu(x)}} < +\infty \Big\}.
\end{equation*}
We define the \textit{support} of $\Bnu \in \M(\R^d,\R^m)$ as the closed set $\supp(\Bnu) := \{ x \in \R^d ~\text{s.t.}~ |\Bnu(\pazocal{N})| \neq 0 ~$ for any neighbourhood $\pazocal{N}$ of $x \}$, and denote by $\Pcal_c(\R^d)$ the subset of probability measures with compact support.

\begin{Def}[Absolute continuity and Radon-Nikodym derivative]
\label{def:RadonNikodym}
Let $\Omega \subset \R^m$ and $U \subset \R^d$ be two Borel sets. Given a pair of measures $(\Bnu,\mu) \in \M(\Omega,U) \times \M(\Omega,\R_+)$, we say that $\Bnu$ is \textnormal{absolutely continuous} with respect to $\mu$, and write $\Bnu \ll \mu$, provided that $|\Bnu(B)| = 0$ whenever $\mu(B) = 0$ for any Borel set $B \subset \Omega$. Moreover, it holds that $\Bnu \ll \mu$ if and only if there exists a Borel map $u \in L^1(\Omega,U;\mu)$ such that $\Bnu = u \mu$. This map is referred to as the \textnormal{Radon-Nikodym derivative} of $\Bnu$ with respect to $\mu$, and denoted by $u := \derv{\Bnu}{\mu}$.
\end{Def}

We now recall the definitions of \textit{pushforward} and \textit{transport plan} for Borel probability measure. 
\begin{Def}[Pushforward of a measure through a Borel map] 
Given a measure $\mu \in \Pcal(\R^d)$ and a Borel map $f : \R^d \rightarrow \R^d$, the \textnormal{pushforward} $f_{\#} \mu$ of $\mu$ through $f$ is the Borel probability measure defined by $f_{\#} \mu (B) := \mu(f^{-1}(B))$ for any Borel set $B \subset \R^d$. 
\end{Def}

\begin{Def}[Transport plans]
Let $\mu,\nu \in \Pcal(\R^d)$. We say that $\gamma \in \Pcal(\R^{2d})$ is a \textnormal{transport plan} between $\mu$ and $\nu$, denoted by $\gamma \in \Gamma(\mu,\nu)$, if $\pi^1_{\#} \gamma = \mu$ and $\pi^2_{\#} \gamma = \nu$, where $\pi^1,\pi^2 : \R^{2d} \rightarrow \R^d$ denote the projection on the first and second component respectively.
\end{Def}

In what follows, we recall the definition and some of the main properties of the so-called \textit{Wasserstein spaces} (see e.g. \cite[Chapter 7]{AGS} or \cite[Chapter 6]{villani1}).

\begin{Def}[Wasserstein spaces] 
Given $p \in [1,+\infty)$ and $\mu,\nu \in \Pcal_p(\R^d)$, the \textnormal{Wasserstein distance of order $p$} between $\mu$ and $\nu$ is defined by
\begin{equation*}
W_p(\mu,\nu) := \inf_{\gamma} \bigg\{ \Big( \INTDom{|x-y|^p}{\R^{2d}}{\gamma(x,y)} \Big)^{1/p} ~~ \text{s.t.} ~ \gamma \in \Gamma(\mu,\nu) \bigg\}.
\end{equation*}
The set of \textnormal{optimal transport plans} realising this optimal value is non-empty and denoted by $\Gamma_o(\mu,\nu)$. The space $(\Pcal_p(\R^d),W_p)$ of probability measures with finite momentum of order $p$ endowed with the $W_p$-metric is called the \textnormal{Wasserstein space} of order $p$.
\end{Def}

\begin{prop}[Elementary properties of the Wasserstein spaces]
\label{prop:Properties_Wp}
For any $p \in [1,+\infty)$, the metric space $(\Pcal_p(\R^d),W_p)$ is complete and separable, and the $W_p$-distance metrises the weak-$^*$ topology induced by \eqref{eq:DualityBracket_Measures}, i.e.
\begin{equation*}
W_p(\mu,\mu_n) \underset{n \rightarrow +\infty}{\longrightarrow} 0 \qquad \text{if and only if} \qquad
\left\{
\begin{aligned}
\mu_n & \underset{n \rightarrow +\infty}{\rightharpoonup^*} \mu, \\
\INTDom{|x|^p}{\R^d}{\mu_n(x)} & \underset{n \rightarrow +\infty}{\longrightarrow} \INTDom{|x|^p}{\R^d}{\mu(x)}.
\end{aligned}
\right.
\end{equation*}
Given two elements $\mu,\nu \in \Pcal(\R^d)$, the Wasserstein distances are ordered, i.e. $W_{p_1}(\mu,\nu) \leq W_{p_2}(\mu,\nu)$ whenever $p_1 \leq p_2$. Moreover when $\mu,\nu \in \Pcal_c(\R^d)$, the following \textnormal{Kantorovich-Rubinstein duality formula} holds 
\begin{equation} \label{eq:Kantorovich_duality}
W_1(\mu,\nu) = \sup_{\phi} \left\{ \INTDom{\phi(x) \,}{\R^d}{(\mu-\nu)(x)} ~\text{s.t.}~ \Lip(\phi(\cdot) \, ;\R^d) \leq 1 ~\right\}.
\end{equation}
where $\Lip(\phi(\cdot) \, ; \Omega)$ denotes the Lipschitz constant of $\phi(\cdot)$ over a subset $\Omega \subset \R^d$.
\end{prop}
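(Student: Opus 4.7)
The plan is to handle the four claims in increasing order of subtlety. The ordering $W_{p_1}(\mu,\nu) \leq W_{p_2}(\mu,\nu)$ for $p_1 \leq p_2$ is the easiest: taking $\gamma \in \Gamma_o(\mu,\nu)$ optimal for the $W_{p_2}$ problem and applying Jensen's inequality to the probability measure $\gamma$ with the convex function $t \mapsto t^{p_2/p_1}$ yields $\bigl(\int |x-y|^{p_1} d\gamma \bigr)^{1/p_1} \leq \bigl(\int |x-y|^{p_2} d\gamma \bigr)^{1/p_2}$, and the left-hand side dominates $W_{p_1}(\mu,\nu)$ by definition.

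For the Kantorovich-Rubinstein duality formula, the inequality $\sup \leq W_1$ is elementary: for any $1$-Lipschitz $\phi$ and any $\gamma \in \Gamma(\mu,\nu)$, one has $\int \phi \, d(\mu-\nu) = \int (\phi(x)-\phi(y)) \, d\gamma(x,y) \leq \int |x-y| \, d\gamma$, and it suffices to take the infimum over $\gamma$ and supremum over $\phi$. The converse follows from the general Kantorovich duality theorem applied to the cost $c(x,y) = |x-y|$: the optimal $c$-concave potential is automatically $1$-Lipschitz since $c$ is itself $1$-Lipschitz in each variable, and its $c$-transform coincides with its negative. The compact-support hypothesis is a convenience guaranteeing that dual potentials lie in $L^1(\mu) \cap L^1(\nu)$.

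For completeness and separability, I would argue as follows. Separability comes from the density in $(\Pcal_p(\R^d), W_p)$ of the countable family of probability measures supported on finitely many rational points with rational weights: given $\mu$, I would first truncate its mass to a large ball (with tail error controlled by $\int_{|x|>R} |x|^p d\mu$), then partition the ball into small cubes and collapse the mass of each cube onto a rational point inside it, then finally rationalize the weights. For completeness, given a $W_p$-Cauchy sequence $(\mu_n)$, the triangle inequality relative to a fixed reference measure yields a uniform bound on $p$-th moments; Markov's inequality then provides tightness, Prokhorov extracts a weak-$^*$ cluster point $\mu \in \Pcal_p(\R^d)$ (finite $p$-th moment by Fatou), and the lower semicontinuity of $W_p$ with respect to weak-$^*$ convergence of marginals combined with the Cauchy property upgrades this to full $W_p$-convergence of the whole sequence.

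The metrization equivalence is the most delicate, and is where the main obstacle lies. In the forward direction, $W_p \to 0$ entails convergence against $1$-Lipschitz test functions via Kantorovich-Rubinstein together with the ordering $W_1 \leq W_p$; this extends to all bounded continuous test functions by a standard approximation argument, and the $p$-th moment convergence is obtained by testing against Lipschitz truncations of $x \mapsto |x|^p$ and controlling the tail uniformly through the $W_p$ bound. The converse direction is the harder one: given weak-$^*$ convergence plus $p$-th moment convergence, I would extract a weak-$^*$ cluster point $\gamma$ of optimal plans $\gamma_n \in \Gamma_o(\mu_n,\mu)$ (tightness follows from that of the marginals), and combine the lower semicontinuity of $\gamma \mapsto \int |x-y|^p \, d\gamma$ with a Vitali-type uniform integrability argument, powered precisely by the $p$-th moment convergence hypothesis, to conclude that $\int |x-y|^p \, d\gamma_n \to 0$.
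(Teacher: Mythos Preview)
The paper does not prove this proposition at all: it is stated as a recollection of well-known facts, with the reader directed at the start of Section~\ref{subsection:MeasureTheory} to the standard monographs \cite{AGS,OTAM,villani1} for the proofs. Your proposal therefore goes beyond what the paper does, supplying actual arguments where the paper simply cites the literature.

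That said, your sketches are correct and track the standard proofs one finds in those references. The Jensen argument for the ordering is exactly the usual one; the Kantorovich--Rubinstein derivation via general duality with the observation that $c$-concave potentials for $c(x,y)=|x-y|$ are $1$-Lipschitz is standard; your completeness argument (uniform moment bound, tightness, Prokhorov, lower semicontinuity of $W_p$) is the textbook route; and for the metrisation equivalence your converse direction---tightness of optimal plans plus a uniform-integrability argument driven by the moment convergence---is precisely the mechanism in \cite[Theorem~6.9]{villani1}. There is nothing to correct.
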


We end this introductory paragraph by recalling the concept of \textit{disintegration} for vector-valued measures (see e.g. \cite[Theorem 2.28]{AmbrosioFuscoPallara}). 

\begin{thm}[Disintegration]
\label{thm:Disintegration}
Let $\Omega_1 \subset \R^{m_1}$, $\Omega_2 \subset \R^{m_2}$ and $U \subset \R^d$ be Borel sets. Let $\Bnu \in \M(\Omega_1 \times \Omega_2,U)$ and $\pi^1 : \R^{m_1} \times \R^{m_2} \rightarrow \R^{m_1}$ be the projection on the first factor. Defining the measure $\mu := \pi^1_{\#} |\Bnu| \in \M(\Omega_1,\R_+)$, there exists a $\mu$-almost uniquely determined Borel family of measures $\{ \Bnu_x \}_{x \in \Omega_1} \subset \M(\Omega_2,U)$ such that
\begin{equation}
\label{eq:Disintegration}
\INTDom{f(x,y)}{\Omega_1 \times \Omega_2}{\Bnu(x,y)} =  \INTDom{ \left( \INTDom{f(x,y)}{\Omega_2}{\Bnu_x(y)} \right)}{\Omega_1}{\mu(x)} 
\end{equation}
for any Borel map $f \in L^1(\Omega_1 \times \Omega_2, |\Bnu |)$. This construction is referred to as the \textnormal{disintegration} of $\Bnu$ onto $\mu$, and it is denoted by $\Bnu = \INTDom{\Bnu_x}{\Omega_1}{\mu(x)}$. 
\end{thm}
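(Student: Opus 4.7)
The plan is to bootstrap the vector-valued statement from the classical scalar disintegration of positive Radon measures via a polar decomposition. First, apply the positive scalar theorem (see e.g. \cite[Theorem 2.28]{AmbrosioFuscoPallara} in the positive case) to the total variation $|\Bnu| \in \M(\Omega_1 \times \Omega_2, \R_+)$: since $\mu = \pi^1_{\#}|\Bnu|$ by construction, this yields a $\mu$-a.e. uniquely determined Borel family $\{|\Bnu|_x\}_{x \in \Omega_1}$ of probability measures on $\Omega_2$ satisfying $|\Bnu| = \INTDom{|\Bnu|_x}{\Omega_1}{\mu(x)}$. Next, since $\Bnu \ll |\Bnu|$ trivially, a standard vector-valued Radon--Nikodym theorem (essentially Definition \ref{def:RadonNikodym}) furnishes a Borel polar field $g : \Omega_1 \times \Omega_2 \to \R^d$ with $|g| = 1$ $|\Bnu|$-a.e. such that $\Bnu = g \, |\Bnu|$. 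The natural candidate disintegration is then
\[
\Bnu_x := g(x, \cdot) \, |\Bnu|_x,
\]
and chaining the two decompositions componentwise via Fubini immediately yields identity \eqref{eq:Disintegration} for every scalar $f \in L^1(\Omega_1 \times \Omega_2, |\Bnu|)$.

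The delicate step, which I expect to be the main technical obstacle, is certifying that each fiber $\Bnu_x$ is indeed a measure with values in the prescribed target set $U$, that is, $\Bnu_x \in \M(\Omega_2, U)$. This is where convexity and closedness of $U$ become essential: writing $U$ as a countable intersection of supporting half-spaces $\{z \in \R^d : \ell_k(z) \leq c_k\}_{k \geq 1}$, the global inequalities $\ell_k(\Bnu(A_1 \times A_2)) \leq c_k |\Bnu|(A_1 \times A_2)$ on product Borel sets transfer to the fibers: fixing $A_2$ and varying $A_1$, the uniqueness in the positive scalar case gives $\ell_k(\Bnu_x(A_2)) \leq c_k |\Bnu|_x(A_2)$ for $\mu$-a.e. $x$. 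A diagonal argument that excludes a single $\mu$-null exceptional set simultaneously for all $k \geq 1$ and all $A_2$ in a countable generating family, combined with a Dynkin-type extension exploiting the linearity of each $\ell_k$ and the $\sigma$-additivity of both $\Bnu_x$ and $|\Bnu|_x$, promotes the inequality to every Borel subset of $\Omega_2$ and yields $\Bnu_x \in \M(\Omega_2, U)$ $\mu$-a.e.

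Uniqueness is the routine last step. If $\{\Bnu'_x\}$ were a second admissible family, applying \eqref{eq:Disintegration} to tensor test functions $f(x,y) = \eta(x)\phi(y)$ with $\eta \in C^0_c(\Omega_1)$, $\phi \in C^0_c(\Omega_2)$ would give $\INTDom{\eta(x)\langle \Bnu_x - \Bnu'_x, \phi \rangle}{\Omega_1}{\mu(x)} = 0$. Letting $\eta$ and $\phi$ range over countable dense subsets of their respective test-function spaces and discarding a single $\mu$-null exceptional set recovers $\Bnu_x = \Bnu'_x$ in $\M(\Omega_2, U)$ for $\mu$-a.e. $x$. Finally, Borel measurability of the fiber map $x \mapsto \Bnu_x$ in the weak-$\ast$ sense is inherited from the scalar positive disintegration together with the Borel regularity of $g$.
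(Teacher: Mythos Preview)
The paper does not actually supply a proof of this theorem: it is recalled as a preliminary, with the sentence ``We end this introductory paragraph by recalling the concept of \textit{disintegration} for vector-valued measures (see e.g. \cite[Theorem 2.28]{AmbrosioFuscoPallara})'' serving as the sole justification. So there is no in-paper argument to compare your proposal against; the authors simply defer to the cited reference, where the vector-valued version is already stated and proved.

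That said, your sketch is the standard route and matches in spirit what one finds in \cite{AmbrosioFuscoPallara}: disintegrate the total variation by the scalar theorem, pull the polar density $g = \tderv{\Bnu}{|\Bnu|}$ through, and set $\Bnu_x = g(x,\cdot)|\Bnu|_x$. Formula \eqref{eq:Disintegration} and the uniqueness argument via tensor test functions are fine. One small caveat: your verification that $\Bnu_x \in \M(\Omega_2,U)$ explicitly invokes convexity and closedness of $U$ (via supporting half-spaces), whereas the theorem as stated only assumes $U$ is Borel. The paper is somewhat informal about what $\M(\Omega,U)$ means for a general Borel $U$; in every actual application in the paper $U$ is the convex compact control set, so your extra hypothesis is harmless in context, but strictly speaking you are proving a slightly narrower statement than the one written.
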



\subsection{First- and second-order differential calculus over $(\Pcal_2(\R^d),W_2)$}
\label{subsection:Wasserstein_Diff}

In this section, we recall key concepts related to first- and second-order differential calculus in the Wasserstein space $(\Pcal_2(\R^d),W_2)$. We refer the reader to \cite[Chapters 9-11]{AGS} and \cite{Gangbo2018} for an exhaustive treatment of the first-order theory, and borrow the main notions dealing with Wasserstein Hessians from \cite[Section 3]{Chow2018}. 

Throughout this section, we denote by $\phi : \Pcal_2(\R^d) \rightarrow \R \cup \{\pm \infty\}$ an extended real-valued functional with non-empty effective domain $D(\phi) =  \{ \mu \in \Pcal_2(\R^d) ~\text{s.t.}~ \phi(\mu) \neq \pm \infty \}$. We will also  denote by $\phi : \Pcal_c(\R^d) \rightarrow \R$ any such functional such that $\Pcal_c(\R^d) \subset D(\phi)$. In the following definition, we recall the notions of \textit{classical subdifferential} and \textit{superdifferential} for functionals defined over $(\Pcal_2(\R^d),W_2)$.

\begin{Def}[Wasserstein subdifferential and superdifferentials]
\label{def:ClassicalSubdifferentials}
Let $\mu \in D(\phi)$. We say that a map $\xi \in L^2(\R^d,\R^d;\mu)$ belongs to the \textnormal{classical subdifferential} $\partial^- \phi(\mu)$ of $\phi(\cdot)$ at $\mu$ provided that 
\begin{equation*} 
\phi(\nu) - \phi(\mu) \geq \sup\limits_{\gamma \in \Gamma_o(\mu,\nu)} \INTDom{\langle \xi(x) , y - x \rangle}{\R^{2d}}{\gamma(x,y)} + o(W_2(\mu,\nu)),
\end{equation*}
for all $\nu \in \Pcal_2(\R^d)$. Similarly, we say that a map $\xi \in L^2(\R^d,\R^d;\mu)$ belongs to the \textnormal{classical superdifferential} $\partial^+ \phi(\mu)$ of $\phi(\cdot)$ at $\mu$ if $(-\xi) \in \partial^- (-\phi)(\mu)$. 
\end{Def}

Following \cite[Chapter 8]{AGS}, we define the \textit{analytical tangent space} $\Tan_{\mu} \Pcal_2(\R^d)$ at $\mu \in \Pcal_2(\R^d)$ by 
\begin{equation}
\label{eq:Definition_TanP2}
\Tan_{\mu} \Pcal_2(\R^d) := \overline{\nabla C^{\infty}_c(\R^d)}^{L^2(\mu)} = \overline{ \big\{ \nabla \xi ~\text{s.t.}~ \xi \in C^{\infty}_c(\R^d) \big\} }^{L^2(\mu)}. 
\end{equation} 
In the next definition, we recall the notion of \textit{differentiable functional} over $\Pcal_2(\R^d)$. 

\begin{Def}[Differentiable functionals in $(\Pcal_2(\R^d),W_2)$]
\label{def:Wasserstein_Diff}
A functional $\phi : \Pcal_2(\R^d) \mapsto \R$ is said to be \textit{differentiable} at $\mu \in D(\phi)$ if $\partial^- \phi(\mu) \cap \partial^+ \phi(\mu) \neq \emptyset$. In this case, there exists a unique elements $\nabla_{\mu} \phi(\mu) \in \partial^- \phi(\mu) \cap \partial^+ \phi(\mu) \cap \Tan_{\mu} \Pcal_2(\R^d)$, called the \textnormal{Wasserstein gradient} of $\phi(\cdot)$ at $\mu$, which satisfies
\begin{equation}
\label{eq:Wasserstein_Diff}
\phi(\nu) - \phi(\mu) = \INTDom{\langle \nabla_{\mu} \phi(\mu)(x) , y - x \rangle}{\R^{2d}}{\gamma(x,y)} + o(W_2(\mu,\nu)),
\end{equation}
for any $\nu \in \Pcal_2(\R^d)$ and $\gamma \in \Gamma_o(\mu,\nu)$. 
\end{Def}

From the characterisation \eqref{eq:Wasserstein_Diff} of the Wasserstein gradient $\nabla_{\mu} \phi(\mu)$, we can write a chain rule along elements of $\Tan_{\mu} \Pcal_2(\R^d)$ (see e.g. \cite[Proposition 10.3.18]{AGS} or the recent improvement of \cite[Proposition 3.6]{SetValuedPMP}).

\begin{prop}[First-order chain rule]
\label{prop:FirstOrder_chain rule}
Suppose that $\phi(\cdot)$ is differentiable at $\mu \in D(\phi)$. Then for any $\xi \in \Tan_{\mu} \Pcal_2(\R^d)$, the map $s \in \R \mapsto \phi((\Id + s \xi)_{\#} \mu)$ is differentiable at $s=0$ with 
\begin{equation}
\label{eq:FirstOrder_Derivative}
\Lpazo_{\xi} \phi(\mu) := \tderv{}{s} \phi((\Id + s \xi)_{\#} \mu) _{\vert s=0} = \INTDom{\langle \nabla_{\mu} \phi(\mu)(x) , \xi(x) \rangle}{\R^d}{\mu(x)}, 
\end{equation}
where $\Lpazo_{\xi} \phi(\mu)$ denotes the \textnormal{Lie derivative} of $\phi(\cdot)$ at $\mu$ in the direction $\xi \in \Tan_{\mu} \Pcal_2(\R^d)$.
\end{prop}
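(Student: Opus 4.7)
The plan is to apply the differentiability identity \eqref{eq:Wasserstein_Diff} directly at $\nu_s := (\Id + s\xi)_{\#} \mu$, then convert the resulting displacement integral into one against $\mu$, while controlling the remainder via the tangent-space structure of $\xi$. First, since $\tilde{\gamma}_s := (\Id,\Id + s\xi)_{\#} \mu \in \Gamma(\mu,\nu_s)$ is an admissible (not necessarily optimal) transport plan, I immediately get $W_2(\mu,\nu_s) \leq |s| \NormL{\xi}{2}{\R^d,\R^d;\mu}$, so that $W_2(\mu,\nu_s) = O(|s|)$ as $s \to 0$.

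Given any $\gamma_s \in \Gamma_o(\mu,\nu_s)$, Definition~\ref{def:Wasserstein_Diff} then yields
\[
\phi(\nu_s) - \phi(\mu) = \INTDom{\langle \nabla_{\mu}\phi(\mu)(x), y - x \rangle}{\R^{2d}}{\gamma_s(x,y)} + o(|s|).
\]
I would decompose the right-hand side as $s \INTDom{\langle \nabla_{\mu}\phi(\mu)(x), \xi(x)\rangle}{\R^d}{\mu(x)} + R_s$, with the remainder bounded via Cauchy--Schwarz by
\[
|R_s| \leq \NormL{\nabla_{\mu}\phi(\mu)}{2}{\R^d,\R^d;\mu} \bigg( \INTDom{|y - x - s\xi(x)|^2}{\R^{2d}}{\gamma_s(x,y)} \bigg)^{1/2}.
\]

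The crux of the argument is to establish that $\INTDom{|y - x - s\xi(x)|^2}{\R^{2d}}{\gamma_s(x,y)} = o(s^2)$. I would first verify this in the model case $\xi = \nabla \varphi$ with $\varphi \in C^{\infty}_c(\R^d)$: for $|s|$ smaller than $\Norm{D^2\varphi}_{\infty}^{-1}$, the map $\Id + s \nabla \varphi = \nabla \bigl( \tfrac{1}{2}|\cdot|^2 + s \varphi \bigr)$ is the gradient of a strictly convex function, so Brenier's theorem forces $\gamma_s = \tilde{\gamma}_s$ and the integral vanishes identically. For a general $\xi \in \Tan_{\mu}\Pcal_2(\R^d)$, I would approximate $\xi$ in $L^2(\R^d,\R^d;\mu)$ by a sequence $\xi_n = \nabla \varphi_n$ with $\varphi_n \in C^{\infty}_c(\R^d)$ as allowed by \eqref{eq:Definition_TanP2}, and use the triangle inequality $W_2((\Id + s\xi)_{\#}\mu,(\Id + s\xi_n)_{\#}\mu) \leq |s|\, \NormL{\xi - \xi_n}{2}{\R^d,\R^d;\mu}$ together with $L^2$-stability of optimal plans to transfer the vanishing of the error to the limit. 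Dividing the resulting identity by $s$ and letting $s \to 0$ then yields exactly \eqref{eq:FirstOrder_Derivative}.

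The main obstacle lies in this last density step: one must interchange the two limits $s \to 0$ and $n \to +\infty$ while keeping a uniform control on the remainder, i.e.\ quantify how optimal plans $\gamma_s^n \in \Gamma_o(\mu,(\Id + s\xi_n)_{\#}\mu)$ depend on $L^2$-perturbations of the generating vector field. This is precisely the sharp characterisation of $\Tan_{\mu}\Pcal_2(\R^d)$ recorded in \cite[Proposition 8.4.6]{AGS} as the subspace of vector fields providing first-order optimal displacements along $\mu$, and invoking it bypasses the finer details of the approximation while preserving the statement in the generality of the tangent space defined in \eqref{eq:Definition_TanP2}.
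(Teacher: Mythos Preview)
The paper does not prove this proposition itself; it simply refers the reader to \cite[Proposition~10.3.18]{AGS} and \cite[Proposition~3.6]{SetValuedPMP}. Your sketch is essentially a reconstruction of the argument found there, and the overall strategy --- apply \eqref{eq:Wasserstein_Diff} along an optimal plan $\gamma_s$, then control the discrepancy between $\gamma_s$ and the canonical plan $\tilde\gamma_s = (\Id,\Id+s\xi)_{\#}\mu$ via the variational characterisation of $\Tan_{\mu}\Pcal_2(\R^d)$ --- is the right one.

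One minor correction is worth flagging. In the smooth case you invoke Brenier's theorem to conclude that ``$\gamma_s = \tilde\gamma_s$'', i.e.\ that the optimal plan is \emph{unique}. Uniqueness in Brenier's theorem requires $\mu \ll \Lcal^d$, which is not assumed in the statement. What does hold for arbitrary $\mu \in \Pcal_2(\R^d)$ is that $\tilde\gamma_s$ is \emph{an} optimal plan whenever $|s| < \Norm{\D^2\varphi}_{\infty}^{-1}$, since its support lies on the graph of the gradient of a convex function and is therefore cyclically monotone. This is already sufficient: because \eqref{eq:Wasserstein_Diff} holds for every $\gamma \in \Gamma_o(\mu,\nu_s)$, you may simply \emph{choose} $\gamma_s = \tilde\gamma_s$ in the smooth case, so that your remainder $R_s$ vanishes identically without any appeal to uniqueness. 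The density argument and the tangent-space characterisation you cite from \cite[Chapter~8]{AGS} then transfer the conclusion to general $\xi \in \Tan_{\mu}\Pcal_2(\R^d)$, exactly as in the reference the paper invokes.
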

 
In the sequel, we will also need a notion of second-order derivatives for functionals defined over $\Pcal_2(\R^d)$. 

\begin{Def}[Hessian bilinear form in $(\Pcal_2(\R^d),W_2)$]
\label{def:Wasserstein_Hessian}
Suppose that $\phi(\cdot)$ is differentiable at $\mu \in D(\phi)$ and suppose that for any $\xi \in \nabla C^{\infty}_c(\R^d)$, the map 
\begin{equation*}
\Lpazo_{\xi} \phi : \nu \in \Pcal_2(\R^d) \mapsto \langle \nabla_{\mu} \phi(\nu) , \xi \rangle_{L^2(\nu)} \in \R, 
\end{equation*}
is also differentiable at $\mu$. Then, the \textnormal{partial Wasserstein Hessian} of $\phi(\cdot)$ at $\mu$ is the bilinear form defined by 
\begin{equation}
\label{eq:Wasserstein_Hessian}
\Hess \phi[\mu](\xi_1,\xi_2) := \Lpazo_{\xi_2} \left( \Lpazo_{\xi_1} \phi(\mu) \right) - \Lpazo_{\D \xi_1 \xi_2} \phi(\mu),
\end{equation}
for any $\xi_1,\xi_2 \in \nabla C^{\infty}_c(\R^d)$. Moreover, if there exists a constant $C_{\mu} > 0$ such that 
\begin{equation*}
\Hess \phi[\mu](\xi_1,\xi_2) \leq C_{\mu} \NormL{\xi_1}{2}{\mu} \NormL{\xi_2}{2}{\mu},
\end{equation*}
we denote again by $\Hess \phi[\mu](\cdot,\cdot)$ its extension to $\Tan_{\mu} \Pcal_2(\R^d)$ and say that $\phi(\cdot)$ is \textit{twice differentiable} at $\mu$. 
\end{Def}

In the following proposition, we recollect several statements from \cite[Section 3]{Chow2018} which yield an analytical expression of the Wasserstein Hessian. This also allows to write a second-order differentiation formula for functionals defined over $\Pcal_2(\R^d)$. 

\begin{prop}[Wasserstein Hessian and second-order expansion]
\label{prop:Wasserstein_Hessian}
Suppose that $\phi(\cdot)$ is differentiable at $\mu \in D(\phi)$ in the sense of Definition \ref{def:Wasserstein_Diff}, and that the maps 
\begin{equation*}
y \in \R^d \mapsto \nabla_{\mu} \phi(\mu)(y) \in \R^d \qquad \text{and} \qquad \nu \in \Pcal_2(\R^d) \mapsto \nabla_{\mu} \phi(\nu)(x) \in \R^d,
\end{equation*}
are continuously differentiable at $x \in \R^d$ and $\mu \in D(\phi)$ respectively. Then, $\phi(\cdot)$ is twice differentiable in the sense of Definition \ref{def:Wasserstein_Hessian}, and its Wasserstein Hessian can be written explicitly as
\begin{equation}
\label{eq:Explicit_Hessian}
\begin{aligned}
\Hess \phi[\mu](\xi_1,\xi_2) = \INTDom{\big\langle \D_x \nabla_{\mu} \phi(\mu)(x) \xi_1(x),\xi_2(x) \big\rangle}{\R^d}{\mu(x)} + \INTDom{\big\langle \D^2_{\mu} \phi(\mu)(x,y) \xi_1(x),\xi_2(y) \big\rangle}{\R^{2d}}{\mu(x) \textnormal{d}\mu(y)},
\end{aligned}
\end{equation}
for any $\xi_1,\xi_2 \in \Tan_{\mu} \Pcal_2(\R^d)$. Here, $\D_x \nabla_{\mu} \phi(\mu)(x) \in \R^{d \times d}$ is the Fr\'echet differential of $\nabla_{\mu} \phi(\mu)(\cdot)$ at $x \in \R^d$, while $\D^2_{\mu} \phi(\mu)(x,\cdot) : \R^d \rightarrow \R^{d \times d}$ denotes the matrix-valued map whose columns are the Wasserstein gradients of the components $((\nabla_{\mu} \phi(\mu)(x))_i)_{1 \leq i \leq d}$ in the sense of Definition \ref{def:Wasserstein_Diff}. Moreover, the following identity
\begin{equation}
\label{eq:SecondOrder_Wasserstein}
\tderv{}{s} \Lpazo_{\xi_1} \phi((\Id + s \xi_2)_{\#} \mu)_{\vert s=0} = \Hess \phi[\mu](\xi_1,\xi_2) + \Lpazo_{\D \xi_1 \xi_2} \phi(\mu),
\end{equation}
holds for any $\xi_1,\xi_2 \in \nabla C^{\infty}_c(\R^d)$.
\end{prop}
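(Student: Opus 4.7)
The plan is to unfold the definition~(\ref{eq:Wasserstein_Hessian}) of the partial Wasserstein Hessian and compute the outer Lie derivative explicitly. By the first-order chain rule in Proposition~\ref{prop:FirstOrder_chain rule} applied with the test vector field $\xi_1 \in \nabla C^{\infty}_c(\R^d)$, for every $\nu$ sufficiently close to $\mu$ in $(\Pcal_2(\R^d),W_2)$ one has $\Lpazo_{\xi_1}\phi(\nu) = \INTDom{\langle \nabla_\mu\phi(\nu)(x),\xi_1(x)\rangle}{\R^d}{\nu(x)}$. Specialising to $\nu = (\Id + s\xi_2)_{\#}\mu$ and changing variables through the pushforward yields
\begin{equation*}
\Lpazo_{\xi_1}\phi\big((\Id+s\xi_2)_{\#}\mu\big) \;=\; \INTDom{\big\langle \nabla_\mu\phi\big((\Id+s\xi_2)_\#\mu\big)(x+s\xi_2(x)),\,\xi_1(x+s\xi_2(x))\big\rangle}{\R^d}{\mu(x)},
\end{equation*}
whose derivative at $s=0$ is precisely the outer Lie derivative $\Lpazo_{\xi_2}(\Lpazo_{\xi_1}\phi)(\mu)$ entering~(\ref{eq:Wasserstein_Hessian}).

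The core of the argument is to differentiate this scalar expression at $s=0$. The assumed continuous differentiability of $y\mapsto\nabla_\mu\phi(\mu)(y)$ near $x$ and of $\nu\mapsto\nabla_\mu\phi(\nu)(x)$ at $\mu$, combined with the smoothness and compact support of $\xi_1,\xi_2$, provide the uniform bounds needed to differentiate under the $\mu$-integral. Three contributions then appear, one for each occurrence of $s$ in the integrand: (i) the variation of the inner argument of $\xi_1$ produces $\INTDom{\langle \nabla_\mu\phi(\mu)(x),\D\xi_1(x)\xi_2(x)\rangle}{\R^d}{\mu(x)}$, which by Proposition~\ref{prop:FirstOrder_chain rule} equals $\Lpazo_{\D\xi_1\xi_2}\phi(\mu)$; (ii) the variation of the spatial argument of $\nabla_\mu\phi(\mu)(\cdot)$ produces the single-integral term involving $\D_x\nabla_\mu\phi(\mu)$ in~(\ref{eq:Explicit_Hessian}); (iii) the variation of the measure argument inside $\nabla_\mu\phi(\cdot)(x+s\xi_2(x))$ is handled componentwise by applying Proposition~\ref{prop:FirstOrder_chain rule} to each scalar functional $\nu\mapsto(\nabla_\mu\phi(\nu)(x))_i$ at $\mu$ in the direction $\xi_2$, which by the very definition of $\D^2_\mu\phi(\mu)(x,\cdot)$ reconstructs the double-integral term in~(\ref{eq:Explicit_Hessian}). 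Subtracting (i) from the sum (i)+(ii)+(iii) as prescribed by~(\ref{eq:Wasserstein_Hessian}) leaves exactly the right-hand side of~(\ref{eq:Explicit_Hessian}), while the identity~(\ref{eq:SecondOrder_Wasserstein}) is the same computation read \emph{without} the subtraction.

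The main technical obstacle lies in step (iii): for fixed $x$, Proposition~\ref{prop:FirstOrder_chain rule} produces a pairing against $\D^2_\mu\phi(\mu)(x,\cdot)$ with an $o(s)$-remainder that a priori depends on $x$, and this remainder must be controlled uniformly on $\supp(\mu)$ in order to exchange the limit in $s$ with the outer $\mu$-integral. This is achieved by exploiting the assumed continuous dependence of $x\mapsto \D^2_\mu\phi(\mu)(x,\cdot)$, together with the fact that for $\mu\in\Pcal_c(\R^d)$ the support is compact and $x+s\xi_2(x)$ remains in a fixed compact set for small $s$, so that the remainders form an equicontinuous family in $x$ whose uniform smallness can be extracted. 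The same continuity hypotheses on $\D_x\nabla_\mu\phi(\mu)(\cdot)$ and $\D^2_\mu\phi(\mu)(\cdot,\cdot)$, combined with Cauchy--Schwarz, then show that the bilinear form~(\ref{eq:Explicit_Hessian}) is bounded on $L^2(\R^d,\R^d;\mu)\times L^2(\R^d,\R^d;\mu)$, which yields the constant $C_\mu$ of Definition~\ref{def:Wasserstein_Hessian} and allows the extension by density from $\nabla C^\infty_c(\R^d)$ to all of $\Tan_\mu\Pcal_2(\R^d)$.
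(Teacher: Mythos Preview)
The paper does not prove this proposition: it is presented as a recollection of results from \cite[Section 3]{Chow2018}, and no argument appears between the statement and the subsequent Definition~\ref{def:C2MF}. There is therefore nothing in the paper itself to compare your proposal against.

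On its own merits, your approach is the natural direct computation and is correct in outline. You unfold $\Lpazo_{\xi_2}(\Lpazo_{\xi_1}\phi)(\mu)$ via the pushforward representation of $\Lpazo_{\xi_1}\phi((\Id+s\xi_2)_{\#}\mu)$, differentiate under the integral at $s=0$, and correctly identify the three contributions (i)--(iii); subtracting (i) yields~(\ref{eq:Explicit_Hessian}), and keeping it yields~(\ref{eq:SecondOrder_Wasserstein}). Your discussion of the uniform control of the $x$-dependent remainder in step (iii) and of the density extension to $\Tan_\mu\Pcal_2(\R^d)$ addresses the two genuine technical points.

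Two minor remarks. First, your term (ii) comes out as $\INTDom{\langle \D_x\nabla_\mu\phi(\mu)(x)\xi_2(x),\xi_1(x)\rangle}{\R^d}{\mu(x)}$, with $\xi_1$ and $\xi_2$ swapped relative to~(\ref{eq:Explicit_Hessian}); the identification relies on the symmetry of the matrix $\D_x\nabla_\mu\phi(\mu)(x)$, which holds because $\nabla_\mu\phi(\mu)$ is a $C^1$ element of $\Tan_\mu\Pcal_2(\R^d)$ and hence a gradient vector field, but you should make this explicit. Second, your control of the remainder in (iii) invokes $\mu\in\Pcal_c(\R^d)$, whereas the proposition is stated for general $\mu\in D(\phi)\subset\Pcal_2(\R^d)$; this restriction is harmless for the paper's applications (all measures are compactly supported), but the general case would require an additional truncation or moment argument.
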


We finally introduce the notion of $\Cpazo^{2,1}_{\loc}$-\textit{Wasserstein regularity}, which will be used throughout this article. 
\begin{Def}[$\Cpazo^{2,1}_{\loc}$-Wasserstein regularity]
\label{def:C2MF}
A functional $\phi(\cdot)$ is said to be \textnormal{$\Cpazo^{2,1}_{\loc}$-Wasserstein regular} if it is twice differentiable over $(\Pcal(K),W_2)$ for any compact set $K \subset \R^d$, and satisfies
\begin{equation}
\label{eq:C21_locMF}
\begin{aligned}
\Norm{\phi(\cdot)}_{\Cpazo^{2,1}(K)} & := \hspace{-0.1cm} \max_{\mu \in \Pcal(K)} |\phi(\mu)| \hspace{-0.025cm} + \hspace{-0.025cm} \NormC{\nabla_{\mu}\phi(\mu)(\cdot)}{0}{K,\R^d} \hspace{-0.025cm} + \hspace{-0.025cm} \NormC{\D_x \nabla_{\mu} \phi(\mu)(\cdot)}{0}{K,\R^{d \times d}} \hspace{-0.025cm} + \hspace{-0.025cm} \NormC{\D^2_{\mu} \phi(\mu)(\cdot,\cdot)}{0}{K \times K,\R^{d \times d}} \hspace{-0.1cm}\\
& \hspace{0.4cm} + \Lip \big( \D_x \nabla_{\mu} \phi(\cdot)(\cdot); \Pcal(K) \times K \big) + \Lip \big( \D^2_{\mu} \phi(\cdot)(\cdot,\cdot); \Pcal(K) \times K \times K \big) \leq C_K,
\end{aligned}
\end{equation}
where $C_K > 0$ is a constant which only depends $K \subset \R^d$.
\end{Def}


\subsection{Mean-field adapted structures and empirical measures}
\label{subsection:DiscreteMeasures}

In this section, we present several notions dealing with functionals defined over
empirical measures in the spirit of \cite{Fornasier2019}, along with an adapted discrete version of the differential structure described in Section \ref{subsection:Wasserstein_Diff}.

We denote by $\Pcal_N(\R^d) := \{ \tfrac{1}{N} \sum_{i=1}^N \delta_{x_i} ~\text{s.t.}~ (x_1,\dots,x_N) \in (\R^d)^N \}$ the set of \textit{$N$-empirical probability measures} over $\R^d$. For any $N \geq 1$, we denote by $\xb = (x_1,\dots,x_N)$ a given element of $(\R^d)^N$ and by $\mu[\xb] := \tfrac{1}{N} \sum_{i=1}^N \delta_{x_i} \in \Pcal_N(\R^d)$ its associated empirical measure. 

\begin{Def}[Symmetric maps defined over $(\R^d)^N$]
A map $\Bphi : (\R^d)^N \rightarrow \R^m$ is said to be \textnormal{symmetric} if $\Bphi \circ \sigma(\cdot) = \Bphi(\cdot)$ for any $d$-blockwise permutation $\sigma : (\R^d)^N \rightarrow (\R^d)^N$.
\end{Def}

In the following definition, we introduce the notion of \textit{mean-field approximating sequence} for continuous functionals defined over $\Pcal_c(\R^d)$.

\begin{Def}[Mean-field approximating sequence]
\label{def:MF_Adapted}
Given an integer $n \geq 1$ and a set $\Omega \subset \R^n$, we define the \textnormal{mean-field approximating sequence} of a  functional $F \in C^0(\Omega \times \Pcal_c(\R^d),\R^m)$ as the family of symmetric maps $(\Fb_N(\cdot,\cdot)) \subset C^0(\Omega \times (\R^d)^N,\R^m)$, defined by 
\begin{equation}
\label{eq:MF_Map}
\Fb_N(x,\xb) := F(x,\mu[\xb]),
\end{equation}
for any $N \geq 1$ and all $(x,\xb) \in \Omega \times (\R^d)^N$.
\end{Def}

We henceforth endow the vector space $(\R^d)^N$ with the rescaled inner product $\langle \cdot, \cdot \rangle_N$ defined by  
\begin{equation}
\label{eq:Adapted_Products}
\langle \xb , \yb \rangle_N =  \tfrac{1}{N} \mathsmaller{\sum}\limits_{i=1}^N \langle x_i , y_i\rangle,
\end{equation} 
for any $\xb,\yb \in (\R^d)^N$, where $\langle \cdot,\cdot \rangle$ is the standard Euclidean product of $\R^d$. We also denote by $|\cdot|_N = \sqrt{\langle \cdot,\cdot \rangle_N}$ the corresponding norm over $(\R^d)^N$, and observe that $((\R^d)^N,\langle \cdot,\cdot \rangle_N)$ is an Hilbert space. 

In the following proposition, we show that the Wasserstein differential structure described in Section \ref{subsection:Wasserstein_Diff} for functionals defined over $\Pcal_2(\R^d)$ induces a natural differential structure on the Hilbert space $((\R^d)^N,\langle \cdot,\cdot \rangle_N)$. We will use the notation $C^{2,1}_{\loc}$ to refer to functionals between finite-dimensional normed vector spaces which are twice differentiable with locally Lipschitz derivatives up to the second-order. 

\begin{prop}[Mean-field derivatives of symmetric maps]
\label{prop:MF_Derivatives}
Let $\phi(\cdot)$ be $\Cpazo^{2,1}_{\loc}$-Wasserstein regular in the sense of Definition \ref{def:C2MF} above and $(\Bphi_N(\cdot)) \subset C^0((\R^d)^N)$ be the mean-field approximating sequence of $\phi(\cdot)$. 

Then, $\Bphi_N \in C^{2,1}_{\loc}((\R^d)^N,\R)$ for any $N \geq 1$, and the following Taylor expansion formula
\begin{equation}
\label{eq:MF_Taylor}
\Bphi_N(\xb + \hb) = \Bphi_N(\xb) + \langle \BGrad{} \Bphi_N(\xb) , \hb \rangle_N + \tfrac{1}{2} \BHess{} \Bphi_N[\xb](\hb,\hb) + o(|\hb|_N^2),
\end{equation}
holds for any $\xb,\hb \in (\R^d)^N$. Here, we introduced the \textnormal{mean-field gradient} $\BGrad{}\Bphi_N(\cdot)$ and \textnormal{mean-field Hessian bilinear form} $\BHess{} \Bphi_N[\cdot]$ of $\Bphi_N(\cdot)$, given respectively by
\begin{equation}
\label{eq:MF_Gradient}
\BGrad{} \Bphi_N(\xb) := (\nabla_{\mu} \phi(\mu[\xb])(x_i))_{1 \leq i \leq N},
\end{equation}
and 
\begin{equation}
\label{eq:MF_Hessian}
\begin{aligned}
\BHess{} \Bphi_N[\xb](\hb^1,\hb^2) := \frac{1}{N} \sum_{i=1}^N \langle \D_x \nabla_{\mu} \phi(\mu[\xb])(x_i) h_i^1 , h_i^2 \rangle_N + \frac{1}{N^2} \sum_{i,j = 1}^N \langle \D^2_{\mu} \phi(\mu[\xb])(x_i,x_j) h_i^1,h_j^2 \rangle, 
\end{aligned}
\end{equation}
for any $\xb,\hb^1,\hb^2 \in (\R^d)^N$. Defining the $C^{2,1}_N$-norm of $\Bphi_N(\cdot)$ over $\Kb \subset (\R^d)^N$ with respect to differential structure of $((\R^d)^N,\langle \cdot,\cdot \rangle_N)$ as 
\begin{equation}
\label{eq:C2Norm}
\Norm{\Bphi_N(\cdot)}_{C^{2,1}_N(\Kb)} \, := \,  \max_{\xb \in \Kb} \Big( \Bphi_N(\xb) + |\BGrad{} \Bphi_N(\xb)|_N + \max_{|\hb|_N = 1} |\BHess{} \Bphi_N[\xb](\hb,\hb)|\Big) +  \Lip \left( \BHess{} \Bphi_N[\cdot] \, ; \Kb \right), 
\end{equation}
it further holds for each compact set $K \subset \R^d$ that
\begin{equation}
\label{eq:C2_Correspondance}
\Norm{\Bphi_N(\cdot)}_{C^{2,1}_N(K^N)} ~ \leq ~ \Norm{\phi(\cdot)}_{\Cpazo^{2,1}(K)},
\end{equation}
where the $\Cpazo^{2,1}$-Wasserstein norm $\Norm{\phi(\cdot)}_{\Cpazo^{2,1}(K)}$ of $\phi(\cdot)$ is defined as in \eqref{eq:C21_locMF}. 
\end{prop}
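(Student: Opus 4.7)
The strategy is to identify the discrete perturbation $\xb \mapsto \xb + s\hb$ with a Wasserstein displacement of $\mu[\xb]$, and then invoke the intrinsic calculus from Section \ref{subsection:Wasserstein_Diff}. Fix $\xb, \hb \in (\R^d)^N$, and first work on the open dense subset where the coordinates $x_1, \dots, x_N$ are pairwise distinct. Via a partition of unity, I would build a smooth interpolating field $\xi_{\hb} \in C_c^\infty(\R^d, \R^d)$ satisfying $\xi_{\hb}(x_i) = h_i$ for every $i$. The key observation is the pushforward identity
\begin{equation*}
\mu[\xb + s\hb] = \tfrac{1}{N}\sum_{i=1}^N \delta_{x_i + s h_i} = (\Id + s\xi_{\hb})_{\#}\mu[\xb], \qquad s \in \R,
\end{equation*}
which holds globally in $s$ (not just infinitesimally) and reduces the computation to differentiating $s \mapsto \phi((\Id + s\xi_{\hb})_{\#}\mu[\xb])$ along a Wasserstein curve with tangent $\xi_{\hb} \in \Tan_{\mu[\xb]}\Pcal_2(\R^d)$.

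The first-order formula \eqref{eq:MF_Gradient} follows directly from Proposition \ref{prop:FirstOrder_chain rule}:
\begin{equation*}
\tderv{}{s}\Bphi_N(\xb + s\hb)_{\vert s=0} = \Lpazo_{\xi_{\hb}}\phi(\mu[\xb]) = \tfrac{1}{N}\sum_{i=1}^N \langle \nabla_\mu \phi(\mu[\xb])(x_i), h_i\rangle = \langle \BGrad{}\Bphi_N(\xb), \hb\rangle_N,
\end{equation*}
and the right-hand side depends only on the values $\xi_{\hb}(x_i) = h_i$, hence is independent of the chosen extension. For the Hessian formula \eqref{eq:MF_Hessian}, I would differentiate the expression $\tderv{}{s}\phi((\Id + s\xi_{\hb})_{\#}\mu[\xb]) = \int\langle \nabla_\mu\phi((\Id+s\xi_{\hb})_{\#}\mu[\xb])(x + s\xi_{\hb}(x)), \xi_{\hb}(x)\rangle\,d\mu[\xb](x)$ once more in $s$ at $s = 0$. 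The spatial variation of the integrand produces $\D_x\nabla_\mu\phi(\mu[\xb])(x)\xi_{\hb}(x)$, while the measure variation produces $\int\D^2_\mu\phi(\mu[\xb])(x, y)\xi_{\hb}(y)\,d\mu[\xb](y)$ by the hypotheses of Proposition \ref{prop:Wasserstein_Hessian} applied componentwise to $\nabla_\mu\phi(\cdot)(x)$. Pairing with $\xi_{\hb}(x)\,d\mu[\xb](x)$ and collapsing to the atoms $x_i$ recovers \eqref{eq:MF_Hessian} exactly; crucially no derivative $\D \xi_{\hb}$ appears, because we differentiate $\phi$ along the curve directly rather than the Lie derivative $\Lpazo_{\xi_{\hb}}\phi$ along it, so the result depends only on $\xb$ and $\hb$.

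Combining these two identities yields the Taylor expansion \eqref{eq:MF_Taylor} on the nondegenerate open dense subset, with remainder controlled by the $\Cpazo^{2,1}_{\loc}$-regularity of $\phi$. Since the right-hand sides of \eqref{eq:MF_Gradient}--\eqref{eq:MF_Hessian} depend continuously and locally Lipschitz on $\xb$ through $(\mu[\xb], x_1, \dots, x_N)$ by the bounds in \eqref{eq:C21_locMF}, a standard density argument extends the $C^{2,1}_{\loc}$ regularity and the Taylor expansion to all $\xb \in (\R^d)^N$. For the norm estimate \eqref{eq:C2_Correspondance}, I would plug \eqref{eq:MF_Gradient}--\eqref{eq:MF_Hessian} into \eqref{eq:C2Norm}: each summand is bounded by the corresponding $C^0(K)$ or $C^0(K \times K)$ norm of a derivative of $\phi$, Cauchy-Schwarz in the rescaled inner product converts sums of products $\tfrac{1}{N}\sum|h_i^1||h_i^2|$ into $|\hb^1|_N|\hb^2|_N$, and the Lipschitz piece for $\BHess{}\Bphi_N[\cdot]$ follows from the joint Lipschitz bounds of \eqref{eq:C21_locMF} together with the empirical-measure estimate $W_2(\mu[\xb], \mu[\yb]) \leq |\xb - \yb|_N$. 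The main technical difficulty I anticipate is the treatment of configurations where several $x_i$ coincide, since no continuous $\xi_{\hb}$ can then simultaneously match all the prescribed values; I would handle these either via the density argument above, or by an atom-by-atom perturbation based on transport plans $\gamma_s = \tfrac{1}{N}\delta_{(x_i, x_i + s h)} + \tfrac{1}{N}\sum_{j \neq i}\delta_{(x_j, x_j)}$, to which the classical Wasserstein differentiability \eqref{eq:Wasserstein_Diff} applies regardless of coincidences.
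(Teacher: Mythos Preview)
Your proposal is correct and follows the same overall strategy as the paper: identify the discrete perturbation $\xb \mapsto \xb + s\hb$ with the pushforward $(\Id + s\xi)_{\#}\mu[\xb]$ for a smooth interpolant $\xi$ satisfying $\xi(x_i) = h_i$, then invoke the Wasserstein calculus of Section~\ref{subsection:Wasserstein_Diff}. The first-order step and the norm estimate \eqref{eq:C2_Correspondance} are handled identically, including the key ingredient $W_2(\mu[\xb],\mu[\yb]) \leq |\xb-\yb|_N$.

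The one tactical difference lies in the second-order computation. The paper chooses its interpolant specifically as a gradient field $\xi_N = \nabla(\eta * \zeta_N) \in \nabla C_c^\infty(\R^d)$ which is \emph{locally affine} near each $x_i$, so that $\D_x\xi_N(x_i) = 0$; it then applies the second-order identity \eqref{eq:SecondOrder_Wasserstein} of Proposition~\ref{prop:Wasserstein_Hessian}, where the correction term $\Lpazo_{\D\xi_N^1 \xi_N^2}\phi(\mu[\xb])$ vanishes by construction. Your route instead differentiates $s \mapsto \tfrac{1}{N}\sum_i \langle \nabla_\mu\phi(\mu_s)(x_i + sh_i), h_i\rangle$ directly, which never produces a $\D\xi_{\hb}$ term because the second slot is the constant $h_i$ rather than $\xi_{\hb}(x_i + sh_i)$. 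Both are valid; the paper's version ties the computation explicitly to the intrinsic Hessian of Definition~\ref{def:Wasserstein_Hessian}, while yours is slightly more elementary and places no constraint on the extension $\xi_{\hb}$. Your explicit density argument for coincident configurations is also a point the paper leaves implicit.
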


\begin{proof}
Take $\xb := (x_1,\dots,x_N) \in (\R^d)^N$ and $\hb := (h_1,\dots,h_N) \in (\R^d)^N$ and define $\epsilon := \tfrac{1}{4} \min_{x_i \neq x_j} |x_i - x_j|$. Consider the map $\zeta_N(\cdot)$ given by
\begin{equation*}
\zeta_N : x \in \R^d \mapsto \left\{
\begin{aligned}
& \langle x , h_i \rangle ~~ & \text{if $x \in B(x_i,2\epsilon)$ with $i \in \{1,\dots,N\}$}, \\
& 0 ~~ &\text{otherwise},
\end{aligned}
\right.
\end{equation*}
and let $\eta \in C^{\infty}_c(\R^d)$ be a symmetric mollifier centred at the origin and supported on $B(0,\epsilon)$. We define the tangent vector $\xi_N \in \nabla C^{\infty}_c(\R^d) \subset \Tan_{\mu[\xb]} \Pcal_2(\R^d)$ at $\mu[\xb]$ by 
\begin{equation}
\label{eq:MF_ProofXi}
\xi_N : x \in \R^d \mapsto \nabla ( \eta * \zeta_N )(x).
\end{equation}
Remark that by construction, one has 
\begin{equation}
\label{eq:MF_Proof1}
\xi_N(x_i) = h_i \qquad \text{and} \qquad \D_x \xi_N(x_i) = 0, 
\end{equation}
so that in particular $\mu[\xb+ s \hb] = (\Id + s \xi_N)_{\#}\mu[\xb]$ for any $s \in \R$ sufficiently small.  

Recall now that $\phi(\cdot)$ is differentiable at $\mu[\xb] \in \Pcal_c(\R^d)$ by hypothesis. Hence by Proposition \ref{prop:FirstOrder_chain rule}, it holds 
\begin{equation*}
\lim_{s \rightarrow 0} \left[ \frac{\phi(\mu[\xb + s\hb]) - \phi(\mu[\xb])}{s} \right] = \Lpazo_{\xi_N} \phi(\mu[\xb]) = \INTDom{ \langle \nabla_{\mu} \phi(\mu[\xb])(x) , \xi_N(x) \rangle}{\R^d}{\mu[\xb](x)}. 
\end{equation*}
Recalling the definition of approximating maps $\Bphi_N(\cdot)$ given in \eqref{eq:MF_Map}, we further obtain 
\begin{equation}
\label{eq:MF_Proof2}
\Bphi_N'(\xb;\hb) := \lim_{s \rightarrow 0} \left[ \frac{\Bphi_N(\xb + s\hb) - \Bphi_N(\xb)}{s} \right] =  \frac{1}{N} \sum_{i=1}^N \langle \nabla_{\mu} \phi(\mu[\xb])(x_i),h_i \rangle, 
\end{equation}
where we used \eqref{eq:MF_Proof1} along with the fact that $\mu[\xb] = \tfrac{1}{N} \sum_{i=1}^N \delta_{x_i}$. It is straightforward to check that the directional derivative $\hb \mapsto \Bphi_N'(\xb;\hb)$ of $\Bphi_N(\cdot)$ defined in \eqref{eq:MF_Proof2} is a linear form and that it is continuous with respect to the rescaled Euclidean metric $|\cdot|_N$. Whence, the map $\Bphi_N(\cdot)$ is Fr\'echet differentiable at $\xb \in (\R^d)^N$, and by Riesz's Theorem (see e.g. \cite[Theorem 5.5]{Brezis}), its differential can be represented in the Hilbert space $((\R^d)^N,\langle \cdot , \cdot \rangle_N)$ by the mean-field gradient $\BGrad{}\Bphi_N(\xb) := (\nabla_{\mu}\phi(\mu[\xb])(x_i))_{1 \leq i \leq N}$ defined in \eqref{eq:MF_Gradient}. 

Consider now two elements $\hb^1,\hb^2 \in (\R^d)^N$ and the corresponding tangent vectors $\xi^1_N,\xi_N^2 \in \nabla C^{\infty}_c(\R^d)$ built as in \eqref{eq:MF_ProofXi}. Since $\phi(\cdot)$ is twice differentiable in the sense of Definition \ref{def:Wasserstein_Hessian}, it holds by \eqref{eq:SecondOrder_Wasserstein} in Proposition \ref{prop:Wasserstein_Hessian} 
\begin{equation}
\label{eq:MF_Proof3}
\lim_{s \rightarrow 0} \left[ \frac{\Lpazo_{\xi_N^1} \phi((\Id + s \xi_N^2)_{\#} \mu[\xb]) - \Lpazo_{\xi_N^1} \phi(\mu[\xb])}{s}\right] \hspace{-0.1cm} = \hspace{-0.075cm} \Hess \phi[\mu[\xb]](\xi_N^1,\xi_N^2) + \Lpazo_{\D \xi_N^1 \xi_N^2} \phi(\mu[\xb]). 
\end{equation}
Observe now that $\D \xi_N^1(x) = 0$ for $\mu[\xb]$-almost every $x \in \R^d$ by \eqref{eq:MF_ProofXi}, so that $\Lpazo_{\D \xi_N^1 \xi_N^2} \phi(\mu[\xb]) = 0$. Furthermore, by the definition of $\Bphi_N(\cdot)$ along with that of $\BGrad{} \Bphi_N(\cdot)$, equation \eqref{eq:MF_Proof3} can be equivalently rewritten as 
\begin{equation}
\label{eq:MF_Proof4}
\begin{aligned}
& \lim_{s \rightarrow 0} \left[ \frac{\langle \BGrad{} \Bphi_N(\xb + s \hb^2) - \BGrad{} \Bphi_N(\xb) , \hb^1 \rangle_N}{s}  \right] \\
& \hspace{2.5cm} = \frac{1}{N} \sum_{i=1}^N \langle \D_x \nabla_{\mu} \phi(\mu[\xb])(x_i) h_i^1 , h_i^2 \rangle + \frac{1}{N^2} \sum_{i,j=1}^N \langle \D^2_{\mu} \phi(\mu[\xb])(x_i,x_j) h_i^1 , h_j^2 \rangle,
\end{aligned}
\end{equation}
where we used the analytical expression \eqref{eq:Explicit_Hessian} of the Wasserstein Hessian. We accordingly introduce the mean-field Hessian bilinear form $\BHess{} \Bphi_N[\xb](\cdot,\cdot)$ of $\Bphi_N(\cdot)$ at $\xb \in (\R^d)^N$, defined as in \eqref{eq:MF_Hessian}. It is again possible to verify that $\BHess{} \Bphi_N [\xb](\cdot,\cdot)$ defines a continuous bilinear form with respect to the rescaled metric $|\cdot|_N$, so that the map $\Bphi_N(\cdot)$ is twice Fr\'echet differentiable over $(\R^d)^N$. The expansion formula \eqref{eq:MF_Taylor} can then be derived by developing $\Bphi_N(\xb+\hb)$ using the classical Taylor theorem in $(\R^d)^N$ along with \eqref{eq:MF_Proof2} and \eqref{eq:MF_Proof4}.

We now prove the regularity bound of \eqref{eq:C2_Correspondance}. Given $K \subset \R^d$, we obtain from the fact that $(\Bphi_N(\cdot))$ is a mean-field approximating sequence for $\phi(\cdot)$ together with the definition of $\BGrad{} \Bphi_N(\cdot)$ displayed in \eqref{eq:MF_Gradient}, that
\begin{equation}
\label{eq:C21N_Est1}
\max_{\xb \in K^N} |\Bphi_N(\xb)| = \max_{\xb \in K^N} |\phi(\mu[\xb])| \leq \max_{\mu \in \Pcal(K)} |\phi(\mu)|,
\end{equation}
and 
\begin{equation}
\label{eq:C21N_Est2}
\max_{\xb \in K^N} |\BGrad{} \Bphi_N(\xb)|_N = \max_{\xb \in K^N} \Big( \tfrac{1}{N} \mathsmaller{\sum}_{i=1}^N |\nabla_{\mu} \phi(\mu[\xb])(x_i)|^2 \Big)^{1/2} \leq \max_{\mu \in \Pcal(K)} \NormC{\nabla_{\mu} \phi(\mu)(\cdot)}{0}{K,\R^d}. 
\end{equation}
Analogously, using the definition of $\BHess{} \Bphi_N(\cdot)$ given in \eqref{eq:MF_Hessian}, we can deduce 
\begin{equation}
\label{eq:C21N_Est3}
\max_{\xb \in K^N} \BHess{} \Bphi[\xb](\hb,\hb) ~ \leq ~ \max_{\mu \in \Pcal(K)} \Big( \NormC{\D_x \nabla_{\mu} \phi(\mu)(\cdot)}{0}{K,\R^{d \times d}} + \NormC{\D_{\mu}^2 \phi(\mu)(\cdot,\cdot)}{0}{K \times K,\R^{d \times d}} \Big),
\end{equation}
as well as the Lipschitz estimate
\begin{equation}
\label{eq:C21N_Est4}
\Lip(\BHess{} \Bphi_N[\cdot] \, ; K^N) \leq \Lip(\D_x \nabla_{\mu} \phi(\cdot)(\cdot) \, ; \Pcal(K) \times K) + \Lip(\D_{\mu}^2 \phi(\cdot)(\cdot,\cdot) \, ; \Pcal(K) \times K \times K),
\end{equation}
where we used the fact that $W_2(\mu[\xb],\mu[\yb]) \leq |\xb-\yb|_N$ for $\xb,\yb \in (\R^d)^N$. By plugging \eqref{eq:C21N_Est1}, \eqref{eq:C21N_Est2}, \eqref{eq:C21N_Est3} and \eqref{eq:C21N_Est4} into \eqref{eq:C2Norm} and recalling the definition \eqref{eq:C21_locMF} of $\Norm{\phi(\cdot)}_{\Cpazo^{2,1}(K)}$, we conclude that \eqref{eq:C2_Correspondance} holds.
\end{proof}

\begin{rmk}[Matrix representation of the mean-field Hessian in $(\R^d)^N$]
\label{rmk:MF_Hessian}
By Riesz's Theorem applied in the Hilbert space $((\R^d)^N),\langle \cdot,\cdot \rangle_N)$, the action of the Hessian bilinear form $\BHess{} \Bphi_N[\xb](\cdot,\cdot)$ can be represented as
\begin{equation}
\label{eq:MF_HessianMatrix}
\BHess{} \Bphi_N[\xb](\hb^1,\hb^2) = \big\langle \BHess{} \Bphi_N(\xb)  \hb^1 , \hb^2 \big\rangle_N, 
\end{equation}
for any $\xb,\hb^1,\hb^2 \in (\R^d)^N$, where $\BHess{} \Bphi_N(\xb) \in \R^{dN \times dN}$ is a matrix. In this case, its components can be obtained via a simple identification in \eqref{eq:MF_Hessian}, and be written explicitly as
\begin{equation*}
(\BHess{} \Bphi_N(\xb))_{i,j} = \D^2_{\mu} \phi(\mu[\xb])(x_i,x_j), \qquad (\BHess{} \Bphi_N(\xb))_{i,i} = N \D_x \nabla_{\mu} \phi(\mu[\xb])(x_i) + \D^2_{\mu} \phi(\mu[\xb])(x_i,x_i), 
\end{equation*}
for any pair of indices $i,j \in \{ 1,\dots,N\}$ such that $i \neq j$. 
\end{rmk}


\section{Locally optimal Lipschitz feedbacks in optimal control}
\label{section:FiniteDimOC}

In this section, we recall classical facts about finite dimensional optimal control problems, and describe in Theorem \ref{thm:LipFeedback} a result proven in \cite{Dontchev2019}, which provides sufficient conditions for the existence of locally optimal Lipschitz feedbacks in a neighbourhood of an optimal trajectory. Throughout this section, we will study the finite-dimensional optimal control problem
\begin{equation*}
(\Ppazo_{\textnormal{oc}}) ~~ \left\{ 
\begin{aligned}
\min_{u(\cdot) \in \U} & \left[ \INTSeg{ \Big( l(t,x(t)) + \psi(u(t)) \Big)}{t}{0}{T} + g(x(T)) \right] \\
\text{s.t.} ~ & \left\{
\begin{aligned}
\dot x(t) & = f(t,x(t)) + u(t), \\
x(0) & = x^0, \\
\end{aligned}
\right.
\end{aligned}
\right.
\end{equation*}
under the following assumptions. 

\begin{taggedhyp}{\textbn{(H$_{\text{oc}}$)}} \hfill
\label{hyp:Hoc}
\begin{enumerate}
\item[(i)] The set of admissible controls is given by $\U = L^{\infty}([0,T],U)$ where $U \subset \R^d$ is convex and compact. 
\item[(ii)] The control cost $u \mapsto \psi(u) \in \R$ is $C^{2,1}$-regular and strictly convex over $U$. 
\item[(iii)] The map $(t,x) \mapsto f(t,x) \in \R^d$ is Lipschitz with respect to $t \in [0,T]$ and $C^{2,1}_{\loc}$-regular with respect to $x \in \R^d$. Moreover, there exists a constant $M > 0$ such that 
\begin{equation*}
|f(t,x)| \leq M(1 + |x|), 
\end{equation*}
for any $(t,x) \in [0,T] \times \R^d$. 
\item[(iv)] The running cost $(t,x) \mapsto l(t,x) \in \R$ is Lipschitz with respect to $t \in [0,T]$ and $C^{2,1}_{\loc}$-regular with respect to $x \in \R^d$. Similarly, the final cost $x \mapsto g(x) \in \R$ is $C^{2,1}_{\loc}$-regular over $\R^d$.
\end{enumerate}
\end{taggedhyp}

It can be easily seen that one could choose integrable maps to express the sub-linearity and Lipschitz regularity of $f(\cdot,\cdot)$ instead of constants. As a direct consequence of \ref{hyp:Hoc}, we have the following lemma. 

\begin{lem}[Uniform compactness of admissible trajectories]
\label{lem:Compactness_FiniteDim}
Given $x^0 \in \R^d$, there exists a compact set $K \subset \R^d$ such that each admissible curve $x(\cdot)$ for $(\Ppazo_{\textnormal{oc}})$ associated to a control $u(\cdot) \in \U$ satisfies $x(\cdot) \in \Lip([0,T],K)$.  
\end{lem}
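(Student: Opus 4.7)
The plan is to combine the sub-linear growth bound on the drift $f(\cdot,\cdot)$ with the compactness of the control set $U$ via a standard Grönwall estimate, and then read off the uniform Lipschitz bound in time directly from the equation. The argument is routine; I only sketch the order in which I would carry it out.

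First, since $U \subset \R^d$ is compact by \ref{hyp:Hoc}(i), there is a constant $C_U > 0$ such that $|u(t)| \leq C_U$ for $\Lcal^1$-a.e. $t \in [0,T]$ and every $u(\cdot) \in \U$. Combining this with the sub-linearity bound $|f(t,x)| \leq M(1+|x|)$ provided by \ref{hyp:Hoc}(iii), I would estimate, along any admissible trajectory $x(\cdot)$ generated by $u(\cdot)$,
\begin{equation*}
|x(t)| \leq |x^0| + \INTSeg{\big(M(1+|x(s)|) + C_U\big)}{s}{0}{t} = |x^0| + (M+C_U)t + M \INTSeg{|x(s)|}{s}{0}{t}.
\end{equation*}

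Applying Grönwall's inequality then yields a constant $R_T > 0$, depending only on $|x^0|$, $M$, $C_U$ and $T$, such that $|x(t)| \leq R_T$ for every $t \in [0,T]$ and every admissible curve. I would then set $K := \overline{B}(0,R_T) \subset \R^d$, which is compact and contains $x([0,T])$ for every admissible trajectory.

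Finally, to obtain uniform Lipschitz continuity in time, I would use the bound $|x(t)| \leq R_T$ together with \ref{hyp:Hoc}(iii) once more to estimate
\begin{equation*}
|\dot x(t)| \leq |f(t,x(t))| + |u(t)| \leq M(1+R_T) + C_U =: \Lpazo_T,
\end{equation*}
for $\Lcal^1$-a.e. $t \in [0,T]$. Hence every admissible $x(\cdot)$ belongs to $\Lip([0,T],K)$ with Lipschitz constant at most $\Lpazo_T$, which depends only on the data of $(\Ppazo_{\textnormal{oc}})$ and on the initial datum $x^0$. There is no conceptual obstacle in this proof; the only point worth a brief comment is that the constants $R_T$ and $\Lpazo_T$ do not depend on the particular choice of admissible control, precisely because $U$ is bounded and $f(\cdot,\cdot)$ has at most linear growth in space.
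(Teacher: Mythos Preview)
Your argument is correct and matches the paper's approach exactly: the paper simply says the result ``follows directly from an application of Gr\"onwall's Lemma,'' and you have written out precisely that application. There is nothing to add.
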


\begin{proof}
This follows directly from an application of Gr\"onwall's Lemma.  
\end{proof}

\begin{prop}[Existence of solutions for problem $(\Ppazo_{\textnormal{oc}})$]
\label{prop:Existence_FiniteDim}
Let $K \subset \R^d$ be a compact set given as in Lemma \ref{lem:Compactness_FiniteDim} and suppose that hypotheses \ref{hyp:Hoc} hold. Then, there exists an optimal trajectory-control pair $(x^*(\cdot),u^*(\cdot)) \in  \Lip([0,T],K) \times \U$  for problem $(\Ppazo_{\textnormal{oc}})$. 
\end{prop}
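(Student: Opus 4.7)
The plan is to apply the direct method of the calculus of variations. Let $(u_n(\cdot),x_n(\cdot)) \subset \U \times \Lip([0,T],K)$ be a minimising sequence for $(\Ppazo_{\textnormal{oc}})$, where $K$ is the fixed compact set provided by Lemma \ref{lem:Compactness_FiniteDim}. Two compactness arguments run in parallel. For the trajectories, hypothesis \ref{hyp:Hoc}(iii) and the compactness of $U$ imply that the velocities $\dot x_n(t) = f(t,x_n(t)) + u_n(t)$ are uniformly bounded in $t$ and $n$, so the curves $x_n(\cdot)$ are equi-Lipschitz in the compact set $K$. Arzelà--Ascoli then furnishes a (non-relabelled) subsequence converging uniformly on $[0,T]$ to some $x^*(\cdot) \in \Lip([0,T],K)$. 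For the controls, since $U \subset \R^d$ is compact, the sequence $(u_n(\cdot))$ is bounded in $L^2([0,T],\R^d)$, and hence admits a weak $L^2$-cluster point $u^*(\cdot)$. Because the set $\{u \in L^2([0,T],\R^d) : u(t) \in U ~ \text{a.e.}\}$ is convex and strongly $L^2$-closed (by convexity and compactness of $U$), it is weakly closed, so $u^*(\cdot) \in \U$.

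Next I would pass to the limit in the integral form of the state equation $x_n(t) = x^0 + \INTSeg{f(s,x_n(s))}{s}{0}{t} + \INTSeg{u_n(s)}{s}{0}{t}$. The drift integral converges to $\INTSeg{f(s,x^*(s))}{s}{0}{t}$ by the uniform convergence of $x_n(\cdot)$ together with the continuity of $f(\cdot,\cdot)$ on $[0,T]\times K$; the control integral converges to $\INTSeg{u^*(s)}{s}{0}{t}$ by testing the weak $L^2$-convergence against $\mathbf{1}_{[0,t]}$. Therefore $x^*(\cdot)$ is precisely the admissible trajectory generated by $u^*(\cdot)$ starting from $x^0$, so $(x^*(\cdot),u^*(\cdot))$ is an admissible pair.

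To conclude, I would establish lower semicontinuity of the cost functional along the extracted subsequence. The terms $\INTSeg{l(t,x_n(t))}{t}{0}{T}$ and $g(x_n(T))$ converge to the corresponding quantities evaluated at $x^*(\cdot)$ by uniform convergence of $x_n(\cdot)$ combined with the continuity of $l$ and $g$ on $[0,T] \times K$ and $K$ respectively, which is provided by \ref{hyp:Hoc}(iv). For the control-cost term $\INTSeg{\psi(u_n(t))}{t}{0}{T}$, the convexity and continuity of $\psi(\cdot)$ on $U$ granted by \ref{hyp:Hoc}(ii) ensure weak $L^2$-lower semicontinuity of the associated integral functional (e.g.\ by Mazur's lemma, or directly by Ioffe's theorem). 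Combining these three facts yields
\begin{equation*}
J(x^*(\cdot),u^*(\cdot)) \leq \liminf_{n \to +\infty} J(x_n(\cdot),u_n(\cdot)) = \inf_{u(\cdot) \in \U} J(x(\cdot),u(\cdot)),
\end{equation*}
so that $(x^*(\cdot),u^*(\cdot))$ is optimal.

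There is no substantial obstacle in this argument, since the problem is control-affine with a convex control cost and a convex compact control set — precisely the setting in which the direct method applies cleanly. The only points requiring genuine use of the hypotheses are the weak closedness of $\U$, which relies on the convexity of $U$ from \ref{hyp:Hoc}(i), and the weak $L^2$-lower semicontinuity of $u(\cdot) \mapsto \INTSeg{\psi(u(t))}{t}{0}{T}$, which relies on the convexity of $\psi(\cdot)$ from \ref{hyp:Hoc}(ii); note that strict convexity is not needed here but will become relevant later when uniqueness and Lipschitz feedback inversion enter the picture.
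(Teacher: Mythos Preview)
Your argument is correct and complete; it is the standard direct-method proof for control-affine problems with convex control cost and compact convex control set. The paper does not give a proof at all but simply cites \cite[Theorem 23.11]{Clarke}, so your write-up is in fact more detailed than what appears in the paper, while following exactly the approach that underlies the cited reference.
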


\begin{proof}
This result is standard under our working hypotheses and can be found e.g. in \cite[Theorem 23.11]{Clarke}. 
\end{proof}

We introduce the \textit{Hamiltonian} function associated with $(\Ppazo_{\textnormal{oc}})$, defined by 
\begin{equation*}
H : (t,x,p,u) \in [0,T] \times (\R^d)^3 \mapsto \langle p , f(t,x) + u \rangle -  \big( l(t,x) + \psi(u) \big). 
\end{equation*} 
Let $(x^*(\cdot),u^*(\cdot))$ be an optimal trajectory-control pair for $(\Ppazo_{\textnormal{oc}})$. By the \textit{Pontryagin Maximum Principle} (see e.g. \cite[Theorem 22.2]{Clarke}), there exists a curve $p^*(\cdot)$ such that the couple $(x^*(\cdot),p^*(\cdot))$ is a solution of the \textit{forward-backward Hamiltonian system}
\begin{equation}
\label{eq:Hamiltonian_FiniteDim}
\left\{
\begin{aligned}
\dot x^*(t) = \hspace{0.4cm} & \nabla_p H(t,x^*(t),p^*(t),u^*(t)), \quad x^*(0) = x^0, \\
\dot p^*(t) = -& \nabla_x H(t,x^*(t),p^*(t),u^*(t)), \quad p^*(T) = -\nabla g(x^*(T)).
\end{aligned}
\right.
\end{equation}
Moreover, the \textit{Pontryagin maximisation condition}
\begin{equation}
\label{eq:maximisation_FiniteDim}
H(t,x^*(t),p^*(t),u^*(t)) = \max_{v \in U} \, H(t,x^*(t),p^*(t),v), 
\end{equation}
holds along this extremal pair for $\Lcal^1$-almost every $t \in [0,T]$. Such a collection of optimal state, costate and control curves $(x^*(\cdot),p^*(\cdot),u^*(\cdot))$ is called an \textit{optimal Pontryagin triple} for $(\Ppazo_{\textnormal{oc}})$. Let it be noted that, since the end-points of $(\Ppazo_{\textnormal{oc}})$ are free, there are no abnormal curves stemming from the maximum principle.

\begin{lem}[Compactness and regularity of the costate]
\label{lem:CostateBound}
Let $K \subset \R^d$ be a compact set given by Lemma \ref{lem:Compactness_FiniteDim} and suppose that hypotheses \ref{hyp:Hoc} hold. Then, there exists a compact set $K' \subset \R^d$ such that $p^*(\cdot) \in \Lip([0,T],K')$. 
\end{lem}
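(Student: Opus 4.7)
The plan is to read the necessary regularity directly off the adjoint equation in \eqref{eq:Hamiltonian_FiniteDim}. Unwrapping the definition of $H$, one has
\begin{equation*}
\dot p^*(t) = -\D_x f(t,x^*(t))^\top p^*(t) + \nabla_x l(t,x^*(t)), \qquad p^*(T) = -\nabla g(x^*(T)),
\end{equation*}
which is a linear time-varying ODE in $p^*(\cdot)$ driven by the (now known) optimal state $x^*(\cdot)$. The control $u^*(\cdot)$ does not explicitly appear, because the controlled term $u$ enters $H$ only through the linear coupling $\langle p,u\rangle$, which is independent of $x$.

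First I would use Lemma \ref{lem:Compactness_FiniteDim} to fix a compact $K \subset \R^d$ containing $x^*([0,T])$. By hypothesis \ref{hyp:Hoc}, the maps $x \mapsto \D_x f(t,x)$, $x \mapsto \nabla_x l(t,x)$ and $x \mapsto \nabla g(x)$ are $C^{1,1}_{\loc}$, hence in particular bounded on $K$, uniformly in $t \in [0,T]$. Setting
\begin{equation*}
A := \sup_{(t,x) \in [0,T]\times K} |\D_x f(t,x)|, \quad B := \sup_{(t,x)\in [0,T]\times K} |\nabla_x l(t,x)|, \quad C := \sup_{x \in K} |\nabla g(x)|,
\end{equation*}
these quantities are finite. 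Integrating the adjoint equation backwards from $T$ and applying Grönwall's inequality yields the bound
\begin{equation*}
|p^*(t)| \,\leq\, \bigl( C + B(T-t) \bigr) e^{A(T-t)} \,\leq\, (C + BT)\, e^{AT}
\end{equation*}
for every $t \in [0,T]$. This gives a compact set $K' := \overline{B}(0,(C+BT) e^{AT}) \subset \R^d$ containing $p^*([0,T])$.

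Finally, plugging this uniform bound back into the adjoint equation gives $|\dot p^*(t)| \leq A(C+BT)e^{AT} + B$ for a.e. $t \in [0,T]$, so $p^*(\cdot) \in \Lip([0,T],K')$ as claimed. There is no real obstacle here: the only mild subtlety is that one must verify the linearity of the adjoint equation in $p^*$ (no $u^*$-dependence on the right-hand side), so that the bound can be obtained independently of any a priori regularity on the optimal control $u^*(\cdot)$.
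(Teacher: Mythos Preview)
Your proof is correct and follows essentially the same route as the paper: write out the adjoint equation explicitly, bound the coefficients using compactness of $K$ and the regularity hypotheses \ref{hyp:Hoc}, then apply Gr\"onwall. In fact your version is slightly more careful, since you retain the inhomogeneous term $\nabla_x l(t,x^*(t))$ coming from the running cost, which the paper's proof drops (harmlessly, as it is bounded on $[0,T]\times K$ anyway).
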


\begin{proof}
The backward Cauchy problem satisfied by $p^*(\cdot)$ in \eqref{eq:Hamiltonian_FiniteDim} can be written explicitly as
\begin{equation*}
\dot p^*(t) = - \D_x f(t,x^*(t))^{\top} p^*(t), \qquad p^*(T) = -\nabla g(x^*(T)), 
\end{equation*}
for $\Lcal^1$-almost every $t \in [0,T]$. Since $(t,x) \in [0,T] \times K \mapsto |\D_x f(t,x)| \in \R_+$ is uniformly bounded by \ref{hyp:Hoc}-$(iii)$, it follows from Gr\"onwall's Lemma that $p^*(\cdot) \in \Lip([0,T],\R^d)$. Moreover, recall that $x \in K \mapsto |\nabla g(x)| \in \R_+$ is also uniformly bounded as a consequence of \ref{hyp:Hoc}-$(iv)$, thus upon invoking Gr\"onwall's Lemma again, there exists a compact set $K' \subset \R^d$ such that $p^*(\cdot) \in \Lip([0,T],K')$. 
\end{proof}

From now on, we denote by $\K = [0,T] \times K \times K' \times U$ the uniform compact set containing the admissible times, states, costates and controls for $(\Ppazo_{\textnormal{oc}})$, and by $\Lpazo_{\K} > 0$ be the Lipschitz constant over $\K$ of the maps $H(\cdot,\cdot,\cdot,\cdot)$, $l(\cdot,\cdot)$, $\psi(\cdot)$ and $g(\cdot)$ and and of their derivatives with respect to the variables $(x,u)$ up to the second order. Observe that both quantities exist as a consequence of Lemma \ref{lem:Compactness_FiniteDim}, Lemma \ref{lem:CostateBound} and hypotheses \ref{hyp:Hoc}.

\begin{Def}[Coercivity estimate]
\label{def:UniformCoercivity}
We say that an optimal Pontryagin triple $(x^*(\cdot),p^*(\cdot),u^*(\cdot))$ for $(\Ppazo_{\textnormal{oc}})$ satisfies the \textnormal{uniform coercivity estimate} with constant $\rho > 0$ if the following inequality holds
\begin{equation}
\label{eq:Coercivity}
\begin{aligned}
\left\langle \nabla^2_x \, g(x^*(T)) y(T) , y(T) \right\rangle & - \INTSeg{ \left\langle \nabla^2_x \, H(t,x^*(t),p^*(t),u^*(t)) y(t) , y(t) \right\rangle}{t}{0}{T} \\
& - \INTSeg{\left\langle \nabla^2_u \, H(t,x^*(t),p^*(t),u^*(t)) w(t) , w(t) \right\rangle}{t}{0}{T} \, \geq \, \rho \INTSeg{|w(t)|^2}{t}{0}{T}&
\end{aligned}
\end{equation}
for any pair of maps $(y(\cdot),w(\cdot)) \in W^{1,2}([0,T],\R^d) \times L^2([0,T],\R^d)$ solution of the \textnormal{linearised system} 
\begin{equation}
\label{eq:linearised_FiniteDim}
\left\{
\begin{aligned}
& \dot y(t) = \D_x f(t,x^*(t))y(t) + w(t), \\
& y(0) = 0 ~~ \text{and} ~~ u^*(t) + w(t) \in U ~\text{for $\Lcal^1$-almost every $t \in [0,T]$}.
\end{aligned}
\right.
\end{equation}
\end{Def}

We are now ready to recall the main contribution of \cite[Theorem 5.2]{Dontchev2019}, which we will use in the proof of Theorem \ref{thm:MainResult2}. Below, we use the notations $\Graph(x(\cdot)) := \{ (t,x(t)) ~\text{s.t.}~ t \in [0,T] \}$ and $B(x,r) \subset \R^d$ for the closed-ball of center $x \in \R^m$ and radius $r>0$ in $\R^d$.

\begin{thm}[Existence of locally optimal feedbacks for $(\Ppazo_{\textnormal{oc}})$]
\label{thm:LipFeedback}
Let $(x^*(\cdot),p^*(\cdot),$ $u^*(\cdot)) \in \Lip([0,T],K) \times \Lip([0,T],K') \times \U$ be an optimal Pontryagin triple for problem $(\Ppazo_{\textnormal{oc}})$. Suppose that  \textnormal{\textbf{(H$_{\text{oc}}$)}} hold and that $(x^*(\cdot),p^*(\cdot),u^*(\cdot))$ satisfies the uniform coercivity estimate \eqref{eq:Coercivity}-\eqref{eq:linearised_FiniteDim} with constant $\rho > 0$. 

Then, there exist positive constants $\epsilon,\eta >0$, an open subset $\Npazo \subset [0,T] \times \R^d$ and a \textnormal{locally optimal feedback} $\bar{u}(\cdot,\cdot) \in \Lip(\Npazo,\R^d)$ whose Lipschitz constant depends only on $\Lpazo_K$ and $\rho$, such that the following holds.
\begin{enumerate}
\item[\textnormal{(a)}] $\bar{u}(t,x^*(t)) = u^*(t)$ for all times $t \in [0,T]$.
\item[\textnormal{(b)}] $\big(\Graph(x^*(\cdot)) + \{0\} \times B(0,\epsilon) \big) \subset \Npazo$.
\item[\textnormal{(c)}] For each $(\tau,\xi) \in \Npazo$, the equation
\begin{equation}
\label{eq:FeedbackRestriction}
\dot x(t) = f(t,x(t)) + \bar{u}(t,x(t)), \qquad x(\tau) = \xi,
\end{equation}
has a unique solution $\hat{x}_{(\tau,\xi)}(\cdot)$ such that $\Graph(\hat{x}_{(\tau,\xi)}(\cdot)) \subset \Npazo$. 
\item[\textnormal{(d)}] The map $\hat{u}_{(\tau,\xi)} : t \in [\tau,T] \mapsto \bar{u}(t,\hat{x}_{(\tau,\xi)}(t)) \in U$ satisfies
\begin{equation*}
\INTSeg{l \big(t,\hat{x}_{(\tau,\xi)}(t),\hat{u}_{(\tau,\xi)}(t) \big)}{t}{\tau}{T} + g(\hat{x}_{(\tau,\xi)}(T)) \leq \INTSeg{l \big( t,x(t),u(t) \big)}{t}{\tau}{T} + g(x(T)), 
\end{equation*} 
for any open-loop pair $(u(\cdot),x(\cdot)) \in \U \times \Lip([\tau,T],\R^d)$ for $(\Ppazo_{oc})$ such that $\NormL{u(\cdot) - \hat{u}_{(\tau,\xi)}(\cdot)}{\infty}{[\tau,T]} \leq \eta$.
\end{enumerate}
\end{thm}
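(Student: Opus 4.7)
The plan is to recast the Pontryagin optimality system \eqref{eq:Hamiltonian_FiniteDim}--\eqref{eq:maximisation_FiniteDim} as a parametric generalized equation in a Banach space of trajectory-costate-control triples, with parameter the pair $(\tau,\xi)$ of initial time and state, and then apply a strong-regularity implicit function theorem of Robinson-Dontchev type. At the optimal triple $(x^*(\cdot), p^*(\cdot), u^*(\cdot))$ the coercivity estimate \eqref{eq:Coercivity} plays the role of the second-order sufficient condition that ensures strong regularity of the linearization; transferring this regularity to the nonlinear generalized equation provides a Lipschitz family of local Pontryagin triples $(\hat{x}_{(\tau,\xi)}, \hat{p}_{(\tau,\xi)}, \hat{u}_{(\tau,\xi)})$, from which the feedback is read off as $\bar{u}(t, \xi) := \hat{u}_{(t,\xi)}(t)$.

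More concretely, I would first write the PMP as a map $\Phi(\tau,\xi; x, p, u) \ni 0$ whose three components enforce the forward state equation with $x(\tau) = \xi$, the backward adjoint equation with $p(T) = -\nabla g(x(T))$, and the pointwise variational inequality $\nabla_u H(t, x(t), p(t), u(t)) \in N_U(u(t))$ corresponding to the maximisation condition \eqref{eq:maximisation_FiniteDim}. The optimal triple associated to the nominal data $(0, x^0)$ is a solution of $\Phi(0, x^0; \cdot) \ni 0$. The linearization of $\Phi$ at this solution is precisely the linearized Hamiltonian system \eqref{eq:linearised_FiniteDim} coupled to the variational inequality obtained by differentiating $\nabla_u H$, and the coercivity assumption \eqref{eq:Coercivity} is equivalent to the quadratic form associated with this linear-quadratic problem being strongly coercive over the feasible directions.

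The key step, and the main obstacle, is to convert this coercivity into a quantitative strong regularity statement for the linearized generalized equation, with Lipschitz constants depending only on $\Lpazo_{\K}$ and $\rho$. This can be done by a Riccati-type decoupling: one shows that a matrix Riccati equation, whose existence on the whole of $[0,T]$ is guaranteed by $\rho > 0$, decouples the linearized forward-backward system and yields a bounded inverse of the associated KKT operator in the $L^{\infty}$-topology with constants controlled by $\rho$ and $\Lpazo_{\K}$. Once this is established, the abstract implicit function theorem for strongly regular generalized equations (see, e.g., the versions developed in \cite{Cibulka2018,Dontchev2019}) produces a Lipschitz localization of solutions of $\Phi(\tau, \xi; \cdot) \ni 0$ on a neighborhood $\Npazo$ of $\Graph(x^*(\cdot))$, together with $\epsilon > 0$ realising (b).

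Properties (a) and (c) then follow from the uniqueness of the localization and from the semigroup identity $\hat{x}_{(\tau,\xi)}(t) = \hat{x}_{(t, \hat{x}_{(\tau,\xi)}(t))}(t)$, which combined with pointwise uniqueness in the variational inequality allows one to identify $\hat{u}_{(\tau,\xi)}(t) = \bar{u}(t, \hat{x}_{(\tau,\xi)}(t))$; Lipschitz dependence of $\bar{u}(\cdot,\cdot)$ is inherited from that of $(\tau,\xi) \mapsto (\hat{x}_{(\tau,\xi)}, \hat{p}_{(\tau,\xi)}, \hat{u}_{(\tau,\xi)})$, up to a time-regularity argument exploiting the explicit pointwise structure of the maximiser in $u$ together with the Lipschitz regularity of $\hat{x}, \hat{p}$. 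Finally, property (d) is obtained by a standard verification argument: the coercivity estimate is a second-order sufficient optimality condition, so for each shifted problem starting at $(\tau,\xi)$ the triple $(\hat{x}_{(\tau,\xi)}, \hat{p}_{(\tau,\xi)}, \hat{u}_{(\tau,\xi)})$ is a strict local minimizer in the $L^{\infty}$-topology, which yields the existence of $\eta > 0$ such that the stated inequality holds for any admissible pair within $\eta$ of $\hat{u}_{(\tau,\xi)}$ in the uniform norm.
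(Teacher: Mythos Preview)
The paper does not prove Theorem~\ref{thm:LipFeedback}; it is quoted verbatim from \cite[Theorem 5.2]{Dontchev2019}, and the paragraph following the statement merely sketches the underlying mechanism (the linearisation of the PMP system is the PMP of the linearised problem $(\Ppazo'_{oc})$, and coercivity \eqref{eq:Coercivity} is the strong positive-definiteness condition that makes the quantitative inverse function theorems of \cite{Cibulka2018} applicable). Your proposal is a faithful and more detailed expansion of exactly this strategy --- generalized-equation reformulation, strong regularity via coercivity, Lipschitz localization of the solution map, and verification of local optimality --- so it matches the approach the paper attributes to \cite{Dontchev2019}.
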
 

The proof of Theorem \ref{thm:LipFeedback} in \cite{Dontchev2019} is based on a general strategy elaborated in \cite{Cibulka2018}, in which several quantitative inverse function theorems are proven under hypotheses akin to \eqref{eq:Coercivity} for non-linear optimal control problems. The key point of this approach is to remark (see e.g. \cite{Dontchev1993}) that the first-order linearisation of the PMP system  \eqref{eq:Hamiltonian_FiniteDim}-\eqref{eq:maximisation_FiniteDim} corresponds to the PMP of the linearised problem
\begin{equation*}
(\Ppazo'_{oc}) ~~ \left\{
\begin{aligned}
\min_{w(\cdot) \in \U'} & \left[ \INTSeg{ \left( \tfrac{1}{2} \langle A(t) y(t),y(t) \rangle + \tfrac{1}{2} \langle B(t) w(t),w(t) \rangle \right)}{t}{0}{T} + \tfrac{1}{2}\langle C(T) y(T),y(T) \rangle\right] \\
\text{s.t.} ~ \, & \left\{
\begin{aligned}
\dot y(t) & = \D_x f(t,x^*(t))y(t) + w(t), \\
y(0) & = 0.
\end{aligned}
\right.
\end{aligned}
\right.
\end{equation*}
associated to $(\Ppazo_{oc})$, where 
\begin{equation*}
\U' = \Big\{ v \in L^2([0,T],U) ~\text{s.t.}~ u^*(t) + v(t) \in U ~ \text{for $\Lcal^1$-almost every $t \in [0,T]$} \Big\},
\end{equation*}
and
\begin{equation*}
\left\{
\begin{aligned}
A(t) & = -\nabla^2_x H(t,x^*(t),p^*(t),u^*(t)), ~~ B(t) = -\nabla^2_u H(t,x^*(t),p^*(t),u^*(t)), \\
C(T) & = \nabla^2_x g(x^*(T)). 
\end{aligned}
\right.
\end{equation*}
for all times $t \in [0,T]$. In this context,  the coercivity estimate plays the role of a strong positive-definiteness condition on the cost of $(\Ppazo'_{oc})$ along optimal trajectories, which allows to invert the corresponding optimality system with a control on the Lipschitz constant of the inverse. In the sequel, we will use the important fact that Theorem \ref{thm:LipFeedback} holds true in any finite-dimensional Hilbert space, and in particular in $((\R^d)^N,\langle \cdot,\cdot \rangle_N)$.


\section{Non-local transport equations and mean-field optimal control}
\label{section:Fornasier2014}

In this section, we recall some results concerning continuity equations and mean-feld optimal control problems. We recall in Section \ref{subsection:ContinuityEq} concepts pertaining to non-local continuity equations, and detail in Section \ref{subsection:ExistenceFornasier2014} a powerful existence result of so-called \textit{mean-field optimal controls} for problem $(\Ppazo)$, which is borrowed from \cite{Fornasier2019}.

In the sequel, we focus on the optimal control problems in Wasserstein spaces written in the general form
\begin{equation*}
(\Ppazo) ~~ \left\{
\begin{aligned}
\min_{u \in \U} & \left[ \INTSeg{ \left( L(t,\mu(t)) + \INTDom{\psi(u(t,x))}{\R^d}{\mu(t)(x)} \right)}{t}{0}{T} + \varphi(\mu(T)) \right] \\
\text{s.t.} & \left\{
\begin{aligned}
& \partial_t \mu(t) + \nabla \cdot \big( (v(t,\mu(t),\cdot) + u(t,\cdot))\mu(t) \big) = 0, \\
& \mu(0) = \mu^0.  
\end{aligned}
\right.
\end{aligned}
\right.
\end{equation*}
Here, $\mu^0 \in \Pcal_c(\R^d)$ is a fixed initial datum, and the minimisation is taken over the set of admissible controls $\U := L^{\infty}([0,T],L^1(\R^d,U;\mu(t))$ where $(\mu(\cdot),u(\cdot))$ is a trajectory-control pair. We make the following working assumption on the data of problem $(\Ppazo)$. 

\begin{taggedhyp}{\textbn{(H)}} \hfill
\label{hyp:H}
\begin{enumerate}
\item[(i)] The set of admissible control values $U \subset \R^d$ is convex and compact. 
\item[(ii)] The control cost  $u \mapsto \psi(u) \in \R$ is $C^{2,1}$-regular and strictly convex over $U$.  
\item[(iii)] The non-local velocity field $(t,x,\mu) \mapsto v(t,\mu,x) \in \R^d$ is Lipschitz with respect to $t \in [0,T]$ and continuous in the $|\cdot|\times W_2$-topology with respect to  $(x,\mu) \in \R^d \times \Pcal_c(\R^d)$. Besides, there exists $M > 0$ such that
\begin{equation*}
|v(t,\mu,x)| ~\leq~ M \Big( 1 + |x| + \left( \mathsmaller{ \INTDom{|y|}{\R^d}{\mu(y)}} \right) \Big),
\end{equation*}
for all times $t \in [0,T]$ and any $(x,\mu) \in \R^d \times \Pcal_c(\R^d)$. Moreover, there exist $l_K,L_K > 0$ such that
\begin{equation*}
 |v(t,\mu,x) - v(t,\mu,y)| \leq l_K |x-y| \qquad \text{and} \qquad |v(t,\mu,x) - v(t,\nu,x)| \leq L_K W_2(\mu,\nu),
\end{equation*}
for any $x,y \in K$ and $\mu,\nu \in \Pcal(K)$, where $K \subset \R^d$ is an arbitrary compact set. 
\item[(iv)] The map $\mu \mapsto v(t,\mu,x) \in \R^d$ is $\Cpazo^{2,1}_{\loc}$-Wasserstein regular. 
\item[(v)] The running cost $(t,\mu) \mapsto L(t,\mu) \in \R$ is Lipschitz with respect to $t \in [0,T]$ and $\Cpazo^{2,1}_{\loc}$-Wasserstein regular with respect to $\mu \in \Pcal_c(\R^d)$. 
\item[(vi)] The final cost $\mu \mapsto \varphi(\mu) \in \R$ is $\Cpazo^{2,1}_{\loc}$-Wasserstein regular.
\end{enumerate}
\end{taggedhyp}
Observe that by classical well-posedness results for non-local continuity equations (see e.g. \cite{BonnetF2021,Pedestrian}) together with known existence results in the context mean-field optimal control problems (see e.g. \cite{Fornasier2019}), it would be sufficient to have locally Lipschitz dynamics and continuous cost functionals for solutions of $(\Ppazo)$ to exist.


\subsection{Non-local transport equations in $\R^d$}
\label{subsection:ContinuityEq}

Given a time horizon $T >0$, we denote by $\lambda := \tfrac{1}{T} \Lcal^1_{\llcorner [0,T]}$ the renormalised Lebesgue measure on $[0,T]$. For any $p \geq 1$, a curve of measures $\mu(\cdot) \in C^0([0,T],\Pcal_p(\R^d))$ can be uniquely lifted to a measure $\tilde{\mu} \in \Pcal_p([0,T] \times \R^d)$ defined by disintegration as $\tilde{\mu} = \INTDom{\mu(t)}{[0,T]}{\lambda(t)}$ in the sense of Theorem \ref{thm:Disintegration}. We shall say that $\mu(\cdot) \in C^0([0,T],\Pcal_p(\R^d))$ solves a \textit{continuity equation} with initial condition $\mu^0 \in \Pcal_p(\R^d)$ driven by a Lebesgue-Borel velocity field $\wb \in L^p([0,T] \times \R^d,\R^d;\tilde{\mu})$ provided that 
\begin{equation}
\label{eq:TransportPDE}
\left\{
\begin{aligned}
& \partial_t \mu(t) + \nabla \cdot (\wb(t,\cdot)\mu(t)) = 0, \\
& \mu(0) = \mu^0.
\end{aligned}
\right.
\end{equation} 
This equation has to be understood in duality against smooth and compactly supported functions, namely 
\begin{equation}
\label{eq:TransportPDE_Dist1}
\INTSeg{\INTDom{ \Big( \partial_t \xi(t,x) + \langle \nabla_x \xi(t,x) , \wb(t,x) \rangle \Big)}{\R^d}{\mu(t)(x)}}{t}{0}{T} = 0
\end{equation}
for any $\xi \in C^{\infty}_c((0,T) \times \R^d)$. 

It has been well-known since the works of Ambrosio in \cite{AmbrosioPDE} (see also \cite[Chapter 8]{AGS}) that weak solutions of continuity equations can exist in this low regularity context. However as already explained in the introduction above, such solutions are not well tailored to the practical investigation of mean-field control problem. Thus in Theorem \ref{thm:Classical_Wellposedness} below, we recall an existence result which was first derived in \cite{Pedestrian}, and that is concerned with classical well-posedness for \textit{non-local transport} equations in $(\Pcal_c(\R^d),W_1)$ under stronger regularity assumptions.

\begin{thm}[Well-posedness of non-local transport equations]
\label{thm:Classical_Wellposedness}
Let $v : (t,\mu,x) \in [0,T] \times \Pcal_c(\R^d) \times \R^d \rightarrow \R^d$ be a non-local velocity field satisfying hypotheses \ref{hyp:H}-$(iii)$. Then for each $\mu^0 \in \Pcal_c(\R^d)$, there exists a unique solution $\mu(\cdot) \in \Lip([0,T],\Pcal_c(\R^d))$ of \eqref{eq:TransportPDE} driven by $\wb : (t,x) \in [0,T] \mapsto v(t,\mu(t),x) \in \R^d$. Furthermore, there exist constants $R_T,L_T > 0$ such that
\begin{equation*}
\supp(\mu(t)) \subset B(0,R_T) \qquad \text{and} \qquad W_1(\mu(t),\mu(s)) \leq L_T |t-s|, 
\end{equation*} 
for all times $s,t \in [0,T]$. 
\end{thm}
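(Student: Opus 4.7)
The plan is to run a Banach fixed-point argument in the spirit of classical Cauchy-Lipschitz, adapted to the non-local setting through a freezing-of-the-measure trick.

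First I would extract a uniform a priori support bound. If $\mu(\cdot)$ is any candidate solution, then along any integral curve $\dot\Phi(t) = v(t,\mu(t),\Phi(t))$ the sublinear growth in \ref{hyp:H}-$(iii)$ combined with the observation that $\INTDom{|y|}{\R^d}{\mu(t)(y)}$ is bounded by the supremum of $|\Phi(t)|$ over all characteristics issued from $\supp(\mu^0)$, yields via Grönwall's inequality a radius $R_T>0$ depending only on $T$, $M$ and $R_0 := \max_{x \in \supp(\mu^0)} |x|$ such that $\supp(\mu(t)) \subset K := B(0,R_T)$ for every $t \in [0,T]$.

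Next I would set up the fixed-point map on the complete metric space $\mathcal{X}_T := C^0([0,T],(\Pcal(K),W_2))$ endowed with $d_T(\mu,\nu) := \sup_{t \in [0,T]} W_2(\mu(t),\nu(t))$. For each $\mu(\cdot) \in \mathcal{X}_T$ the frozen field $w_\mu(t,x) := v(t,\mu(t),x)$ is Borel-measurable in $t$ and $l_K$-Lipschitz in $x$ on $K$, so standard Cauchy-Lipschitz theory yields a flow $\Phi^\mu_{0,\cdot}$, and I define $\Gamma(\mu)(t) := (\Phi^\mu_{0,t})_\# \mu^0$, which lies again in $\Pcal(K)$ by the a priori step. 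Given $\mu,\nu \in \mathcal{X}_T$, the coupling between $\Gamma(\mu)(t)$ and $\Gamma(\nu)(t)$ induced by $\mu^0$ together with the differential inequality
\[
\tderv{}{t}|\Phi^\mu_{0,t}(x) - \Phi^\nu_{0,t}(x)| \leq l_K |\Phi^\mu_{0,t}(x) - \Phi^\nu_{0,t}(x)| + L_K W_2(\mu(t),\nu(t))
\]
and Grönwall produce the estimate
\[
W_2(\Gamma(\mu)(t),\Gamma(\nu)(t)) \leq L_K e^{l_K T}\, t\, d_T(\mu,\nu).
\]
Choosing $T' > 0$ with $L_K e^{l_K T'} T' < 1$ makes $\Gamma$ a contraction on $[0,T']$, and Banach's fixed-point theorem produces a unique $\mu^*(\cdot)$ satisfying the self-consistency relation $\mu^*(t) = (\Phi^{\mu^*}_{0,t})_\# \mu^0$. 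Since $w_{\mu^*}(\cdot,\cdot)$ is Lipschitz in space, the standard pushforward characterisation (see e.g. \cite[Chapter 8]{AGS}) identifies this fixed point as a distributional solution of \eqref{eq:TransportPDE} in the sense of \eqref{eq:TransportPDE_Dist1}. Patching consecutive intervals of length $T'$ then extends existence and uniqueness to $[0,T]$.

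The final quantitative estimates in the statement are essentially free at this point: $\supp(\mu^*(t)) \subset B(0,R_T)$ is the a priori bound, while the time-Lipschitz estimate in $W_1$ follows from the Kantorovich-Rubinstein duality \eqref{eq:Kantorovich_duality}, which applied to $W_1(\mu^*(t),\mu^*(s))$ together with the uniform bound of $v$ on $[0,T] \times \Pcal(K) \times K$ gives $W_1(\mu^*(t),\mu^*(s)) \leq L_T |t-s|$. The main subtlety is the mismatch between the $W_2$-Lipschitz hypothesis on the non-local field and the $W_1$ regularity claim: the contraction argument must be carried out in $W_2$, and only at the very end one downgrades to $W_1$ via the ordering $W_1 \leq W_2$ from Proposition \ref{prop:Properties_Wp}.
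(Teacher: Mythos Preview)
The paper does not actually prove this theorem; it is stated as a recalled result, attributed to \cite{Pedestrian} (and more recently \cite{BonnetF2021}). So there is no in-paper proof to compare against.

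Your proposal is the standard Banach fixed-point argument via freezing of the measure, which is indeed the strategy behind the cited references, and it is essentially correct. One small point worth tightening: when you claim that $\Gamma(\mu)(t) \in \Pcal(K)$ ``by the a priori step'', note that the a priori support bound was derived for genuine solutions, not for the image of an arbitrary curve $\mu(\cdot) \in \mathcal{X}_T$ under $\Gamma$. To make $\Gamma$ a self-map of $C^0([0,T'],\Pcal(K))$ you either need to choose $R_T$ and $T'$ jointly so that the Grönwall bound on characteristics driven by a frozen curve in $\Pcal(K)$ stays inside $K$ (which again forces a small-time restriction, compatible with your contraction step), or alternatively run the fixed point on the full space $C^0([0,T'],\Pcal_2(\R^d))$ and recover the support bound a posteriori from the self-consistent solution. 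Either fix is routine and does not affect the structure of your argument.
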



\subsection{Existence of mean-field optimal controls for problem $(\Ppazo)$}
\label{subsection:ExistenceFornasier2014}

In this section, we show how problem $(\Ppazo)$ can be reformulated so as to encompass a suitable sequence of approximating discrete problems $(\Ppazo_N)$. We subsequently recall a powerful existence result derived in \cite{Fornasier2019} for general multi-agent optimal control problems formulated in Wasserstein spaces.

We start by fixing an integer $N \geq 1$, an initial datum $\xb^0_N \in (\R^d)^N$, and the associated empirical measure $\mu^0_N = \mu[\xb^0_N]$ as in Section \ref{subsection:DiscreteMeasures}. As exposed in the introduction, we consider the family of discrete problems
\begin{equation*}
\begin{aligned}
(\Ppazo_N) ~~ \left\{ 
\begin{aligned}
\min_{\ub(\cdot) \in \U_N} & \left[\INTSeg{\Big( \Lb_N(t,\xb(t)) + \frac{1}{N}\sum_{i=1}^N \psi(u_i(t)) \Big)}{t}{0}{T} + \Bvarphi_N(\xb(T)) \right] \\
\text{s.t.} ~ & \left\{
\begin{aligned}
& \dot x_i(t) = \vb_N(t,\xb(t),x_i(t)) + u_i(t), \\
& x_i(0) = x_i^0, 
\end{aligned}
\right.
\end{aligned}
\right.
\end{aligned}
\end{equation*}
with $\U_N = L^{\infty}([0,T],U^N)$, and where the mean-field approximating functionals are defined by 
\begin{equation}
\label{eq:MF_ApproximatingSeq}
\vb_N(t,\xb,x) := v(t,\mu[\xb],x), \qquad \Lb_N(t,\xb) := L(t,\mu[\xb]) \qquad \text{and} \qquad \Bvarphi_N(\xb) := \varphi(\mu[\xb]), 
\end{equation}
for any $(t,\xb,x) \in [0,T] \times (\R^d)^N \times \R^d$. It can be checked that as a consequence of hypotheses \ref{hyp:H}, the problems $(\Ppazo_N)$ satisfy hypotheses \ref{hyp:Hoc}. We can thus deduce the following lemma directly from Proposition \ref{prop:Existence_FiniteDim}.

\begin{lem}[Existence of solutions for $(\Ppazo_N)$]
\label{lem:Existence_MF}
Under hypotheses \ref{hyp:H} for each $N \geq 1$, there exists an optimal trajectory-control pair $(\xb_N^*(\cdot),\ub^*_N(\cdot)) \in \Lip([0,T],(\R^d)^N) \times \U_N$ solution of $(\Ppazo_N)$. 
\end{lem}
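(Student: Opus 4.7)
The natural plan is to recast $(\Ppazo_N)$ as a single finite-dimensional control problem of the form $(\Ppazo_{\textnormal{oc}})$ on the product Hilbert space $((\R^d)^N, \langle \cdot, \cdot \rangle_N)$, and then apply Proposition \ref{prop:Existence_FiniteDim} to it. Concretely, I would concatenate the agents into a single state $\xb = (x_1,\dots,x_N) \in (\R^d)^N$ and a single control $\ub = (u_1,\dots,u_N) \in U^N$, and rewrite the controlled dynamics of $(\Ppazo_N)$ as $\dot \xb(t) = \Fb_N(t,\xb(t)) + \ub(t)$, where $\Fb_N(t,\xb) := (\vb_N(t,\xb,x_i))_{1 \leq i \leq N}$. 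The running and terminal costs of $(\Ppazo_N)$ then fit directly into the template of $(\Ppazo_{\textnormal{oc}})$, with running state cost $\Lb_N(t,\xb)$, terminal state cost $\Bvarphi_N(\xb)$, and control cost $\ub \mapsto \tfrac{1}{N} \sum_i \psi(u_i)$.

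The second step is to verify that the structural hypotheses \ref{hyp:Hoc} are satisfied by this reformulation. For \ref{hyp:Hoc}-(i), the set $U^N$ is convex and compact in $(\R^d)^N$ because $U$ is, by \ref{hyp:H}-(i). For \ref{hyp:Hoc}-(ii), the control cost $\ub \mapsto \tfrac{1}{N}\sum_i \psi(u_i)$ inherits $C^{2,1}$-regularity and strict convexity from $\psi(\cdot)$ via \ref{hyp:H}-(ii). For \ref{hyp:Hoc}-(iii), the Lipschitz regularity in time and the sublinear growth of $\Fb_N(\cdot,\cdot)$ follow pointwise from \ref{hyp:H}-(iii), since $|\vb_N(t,\xb,x_i)| \leq M(1 + |x_i| + |\xb|_N)$; the required spatial regularity in $\xb$ combines the local Lipschitz control on $v(t,\mu,\cdot)$ given by \ref{hyp:H}-(iii) with the $\Cpazo^{2,1}_{\loc}$-Wasserstein regularity in $\mu$ from \ref{hyp:H}-(iv), transferred to regularity in $\xb$ via Proposition \ref{prop:MF_Derivatives}. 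Finally, for \ref{hyp:Hoc}-(iv), Proposition \ref{prop:MF_Derivatives} applied to \ref{hyp:H}-(v) and \ref{hyp:H}-(vi) ensures that $\Lb_N(t,\cdot)$ and $\Bvarphi_N(\cdot)$ are $C^{2,1}_{\loc}$ on $(\R^d)^N$, with Lipschitz dependence in $t$ for the former.

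Once \ref{hyp:Hoc} is in place for the lifted problem, Proposition \ref{prop:Existence_FiniteDim} immediately yields an optimal pair $(\xb^*_N(\cdot),\ub^*_N(\cdot)) \in \Lip([0,T],(\R^d)^N) \times \U_N$ for $(\Ppazo_N)$, where the Lipschitz regularity of $\xb^*_N(\cdot)$ is ensured by the uniform boundedness of $U$ and the sublinear growth of $\Fb_N$ together with Gr\"onwall's Lemma, as in Lemma \ref{lem:Compactness_FiniteDim}.

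The only mildly delicate point is the verification of the spatial regularity of $\Fb_N(t,\cdot)$ on $(\R^d)^N$: strictly speaking, \ref{hyp:H} only imposes $C^{2,1}_{\loc}$-Wasserstein regularity on $\mu \mapsto v(t,\mu,x)$ and local Lipschitz regularity in $x$, so the reformulated drift is genuinely $C^{2,1}_{\loc}$ in $\xb$ only insofar as the $x$-regularity of $v$ is inherited component-wise. However, since Proposition \ref{prop:Existence_FiniteDim} is being invoked here for mere existence (which in Clarke's framework only requires continuity and sublinear growth of $f$ together with convexity of the extended velocity-cost set, guaranteed by the control-affine structure and strict convexity of $\psi$), the full $C^{2,1}_{\loc}$ regularity in $\xb$ is not actually needed at this stage; it will become indispensable later, when Theorem \ref{thm:LipFeedback} is applied uniformly in $N$ to derive the Lipschitz feedbacks of Theorem \ref{thm:MainResult2}.
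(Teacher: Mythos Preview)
Your proposal is correct and follows exactly the paper's approach: the paper simply notes that hypotheses \ref{hyp:H} imply hypotheses \ref{hyp:Hoc} for $(\Ppazo_N)$ and then deduces the lemma directly from Proposition \ref{prop:Existence_FiniteDim}. Your write-up is in fact more detailed than the paper's one-line justification, and your final remark about the existence argument not actually requiring the full $C^{2,1}_{\loc}$ regularity is a valid observation that the paper does not make explicit.
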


We proceed by recasting problem $(\Ppazo)$ into a framework which also encompasses the sequence of problems $(\Ppazo_N)$. Recall that, by Definition \ref{def:RadonNikodym}, a vector-valued measure $\Bnu \in \M([0,T]\times \R^d,U)$ is absolutely continuous with respect to $\tilde{\mu}$ if and only if there exists a map $u(\cdot,\cdot) \in L^1([0,T] \times \R^d,U;\tilde{\mu})$ such that $\Bnu = u(\cdot,\cdot) \tilde{\mu}$. Moreover, the absolute continuity of $\Bnu$ with respect to $\tilde{\mu}$ implies the existence of a $\lambda$-almost unique family of measures $\{ \Bnu(t) \}_{t \in [0,T]}$ such that $\Bnu = \INTDom{\Bnu(t)}{[0,T]}{\lambda(t)}$ in the sense of Theorem \ref{thm:Disintegration}. Whence, problem $(\Ppazo)$ can be relaxed as
\begin{equation*}
(\Ppazo_{\textnormal{meas}}) ~~ \left\{
\begin{aligned}
\min_{\Bnu \in \Ucal} & \left[ \INTSeg{\Big( L(t,\mu(t)) + \Psi(\Bnu(t) \vert \mu(t)) \Big)}{t}{0}{T}  + \varphi(\mu(T)) \right] \\
\text{s.t.} ~ & \left\{
\begin{aligned}
& \partial_t \mu(t) + \nabla \cdot \big( v(t,\mu(t),\cdot) \mu(t) + \Bnu(t) \big) = 0, \\
& \mu(0) = \mu^0.  
\end{aligned}
\right.
\end{aligned}
\right.
\end{equation*}
where we introduced the set $\Ucal = \M([0,T] \times \R^d,U)$ of \textit{generalised measure controls}, and the map 
\begin{equation*}
\Psi( \, \cdot \, \vert \mu) : \Bsigma \in \M(\R^d,U) \mapsto \left\{ 
\begin{aligned}
& \INTDom{\psi \left( \derv{\Bsigma}{\mu}(x) \right)}{\R^d}{\mu(x)} ~~ & \text{if $\Bsigma \ll \mu$,} \\
& + \infty ~~ & \text{otherwise}.
\end{aligned}
\right.
\end{equation*} 
One can then associate to any optimal trajectory-control pair $(\xb^*_N(\cdot),\ub^*_N(\cdot)) \in \Lip([0,T],(\R^d)^N) \times \U_N$ for $(\Ppazo_N)$ a measure trajectory-control pair  $(\mu_N^*(\cdot),\Bnu^*_N) \in \Lip([0,T],\Pcal_N(\R^d)) \times \Ucal$, defined by 
\begin{equation}
\label{eq:DiscreteMeasureCurve}
\mu^*_N(\cdot) = \frac{1}{N} \sum_{i=1}^N \delta_{x_i^*(\cdot)}, \qquad \Bnu_N^* = \INTDom{ \left( \frac{1}{N} \sum_{i=1}^N u_i^*(t) \delta_{x_i^*(t)} \right) }{[0,T]}{\lambda(t)}. 
\end{equation}
In the following theorem, we state a condensed version of the main result of \cite{Fornasier2019}, which shows that this relaxation allows to prove the $\Gamma$-convergence of the discrete problems $(\Ppazo_N)$ towards $(\Ppazo)$.

\begin{thm}[Existence of mean-field optimal controls for $(\Ppazo)$]
\label{thm:Fornasier2014}
Let $\mu^0 \in \Pcal_c(\R^d)$ be given, $(\mu_N^0) \subset \Pcal_c(\R^d)$ be a sequence of uniformly compactly supported empirical measures associated with $(\xb_N^0) \subset (\R^d)^N$ such that $W_1(\mu^0_N,\mu^0) \longrightarrow 0$ as $N \rightarrow +\infty$, and assume that hypotheses \ref{hyp:H} hold. For any $N \geq 1$, let $(\xb^*_N(\cdot),\ub^*_N(\cdot)) \in \Lip([0,T],(\R^d)^N) \times \U_N$ be an optimal trajectory-control pair for $(\Ppazo_N)$ and $(\mu^*_N(\cdot),\Bnu^*_N) \in \Lip([0,T],\Pcal_N(\R^d)) \times \Ucal$ be the corresponding measure trajectory-control pair defined as in \eqref{eq:DiscreteMeasureCurve}. 

Then, there exists a pair $(\mu^*(\cdot),\Bnu^*) \in \Lip([0,T],\Pcal_c(\R^d)) \times \Ucal$ such that
\begin{equation*}
\max_{t \in [0,T]}W_1(\mu^*_N(t),\mu^*(t)) ~\underset{\mathsmaller{N \rightarrow +\infty}}{\longrightarrow}~ 0 \qquad \text{and} \qquad \Bnu^*_N ~\underset{\mathsmaller{N \rightarrow +\infty}}{\rightharpoonup^*}~ \Bnu^*,
\end{equation*}
along a suitable subsequence. Moreover, the classical trajectory-control pair
\begin{equation*}
\Big( \mu^*(\cdot),\tderv{\Bnu^*}{\tilde{\mu}^*}(\cdot,\cdot) \Big) \in \Lip([0,T],\Pcal_c(\R^d)) \times L^{\infty}([0,T] \times \R^d,U;\tilde{\mu}^*),
\end{equation*}
is optimal for $(\Ppazo)$, where $\tilde{\mu}^* = \INTDom{\mu^*(t)}{[0,T]}{\lambda(t)}$.  
\end{thm}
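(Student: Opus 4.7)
The plan is to combine compactness arguments for the discrete optimal curves with a $\Gamma$-convergence scheme between the problems $(\Ppazo_N)$ and the measure-valued relaxation $(\Ppazo_{\textnormal{meas}})$, whose minima coincide with those of $(\Ppazo)$ by the definition of $\Psi(\cdot\mid\cdot)$. First I would establish uniform compactness: since $(\mu^0_N)$ have uniformly compact supports, the sublinear growth of $v(\cdot,\cdot,\cdot)$ from \ref{hyp:H}-$(iii)$ and the compactness of $U$ allow one to apply Gr\"onwall's lemma simultaneously to every $(\Ppazo_N)$, providing a compact set $K \subset \R^d$ such that $\supp(\mu^*_N(t)) \subset K$ uniformly in $t \in [0,T]$ and $N \geq 1$. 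The ODE bound also yields a uniform Lipschitz estimate $W_1(\mu^*_N(t),\mu^*_N(s)) \leq L_T |t-s|$. Ascoli--Arzel\`a in $(\Pcal(K),W_1)$ then provides, along a subsequence, a limit $\mu^*(\cdot) \in \Lip([0,T],\Pcal_c(\R^d))$ with $\max_t W_1(\mu^*_N(t),\mu^*(t)) \to 0$.

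Second, for the generalised controls, the compactness of $U$ gives $|\Bnu^*_N|([0,T] \times \R^d) \leq T \sup_{u \in U}|u|$, so by Banach--Alaoglu a subsequence converges to some $\Bnu^* \in \Ucal$ supported in $[0,T] \times K$. Passing to the limit in the distributional formulation \eqref{eq:TransportPDE_Dist1} of the continuity equation driven by $v(t,\mu^*_N(t),\cdot)\mu^*_N(t) + \Bnu^*_N(t)$, using the joint $W_1$-continuity of $v(\cdot,\cdot,\cdot)$ from \ref{hyp:H}-$(iii)$ and the uniform convergence of $\mu^*_N(\cdot)$, identifies $(\mu^*(\cdot),\Bnu^*)$ as an admissible pair for $(\Ppazo_{\textnormal{meas}})$.

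Third comes the $\liminf$ inequality on the functionals. The terms $\INTSeg{L(t,\mu^*_N(t))}{t}{0}{T} + \varphi(\mu^*_N(T))$ pass to their limits by the $W_1$-continuity granted by \ref{hyp:H}-$(v)$--$(vi)$. The hard part -- the main obstacle of the proof -- is the lower semicontinuity
\[
\INTSeg{\Psi(\Bnu^*(t) \mid \mu^*(t))}{t}{0}{T} \leq \liminf_{N \rightarrow +\infty} \frac{1}{N}\INTSeg{\sum_{i=1}^N \psi(u_i^*(t))}{t}{0}{T}
\]
for the control cost along the weak-$*$ convergence $(\mu^*_N(\cdot), \Bnu^*_N) \rightharpoonup^* (\mu^*(\cdot),\Bnu^*)$. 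This is the classical relaxation of an integral functional with a convex density, which one proves via a Ioffe--Olech-type argument on the product space $[0,T] \times K$, exploiting joint convexity and positive one-homogeneity of $(\Bsigma,\mu) \mapsto \Psi(\Bsigma \mid \mu)$.

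Finally, for optimality, given any admissible pair $(\mu(\cdot),u(\cdot,\cdot))$ for $(\Ppazo)$ one constructs a recovery sequence for $(\Ppazo_N)$ by pushing $\mu^0_N$ along the Cauchy--Lipschitz flow of $v(t,\mu(t),\cdot) + u(t,\cdot)$; the stability statement of Theorem \ref{thm:Classical_Wellposedness} and the continuity of the costs force the discrete values $\mathrm{cost}_N(\xb^*_N(\cdot),\ub^*_N(\cdot))$ to be bounded above by $\mathrm{cost}(\mu(\cdot),u(\cdot,\cdot))$ asymptotically. Combined with the $\liminf$ estimate, this shows $(\mu^*(\cdot),\Bnu^*)$ minimises $(\Ppazo_{\textnormal{meas}})$. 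Minimality then forces $\Bnu^* \ll \tilde\mu^*$, since any non-trivial singular part would make $\Psi(\cdot \mid \mu^*(\cdot))$ equal to $+\infty$; therefore the Radon--Nikodym derivative $\derv{\Bnu^*}{\tilde\mu^*}(\cdot,\cdot)$ is well-defined, takes values in $U$, and furnishes an optimal classical control for $(\Ppazo)$.
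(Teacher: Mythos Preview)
The paper does not supply its own proof of this theorem; it is quoted as ``a condensed version of the main result of \cite{Fornasier2019}'', so there is no in-paper argument to compare your sketch against. Your compactness step and the $\liminf$ inequality via joint convexity of $(\Bsigma,\mu)\mapsto\Psi(\Bsigma\mid\mu)$ are the standard ingredients and are correctly identified.

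The recovery-sequence ($\Gamma$-$\limsup$) step, however, has a genuine gap. For a generic admissible pair $(\mu(\cdot),u(\cdot,\cdot))$ of $(\Ppazo)$ the control $u(t,\cdot)$ lies only in $L^1(\R^d,U;\mu(t))$: it is Borel, defined merely $\mu(t)$-almost everywhere, and carries no spatial regularity. There is therefore no ``Cauchy--Lipschitz flow of $v(t,\mu(t),\cdot)+u(t,\cdot)$'' along which to push $\mu^0_N$, and Theorem~\ref{thm:Classical_Wellposedness} does not apply, since it requires the full driving field to satisfy the Lipschitz hypothesis \ref{hyp:H}-$(iii)$. Even when $u(t,\cdot)$ happens to be Lipschitz, pushing $\mu^0_N$ through the flow of $v(t,\mu(t),\cdot)+u(t,\cdot)$ yields a curve driven by the \emph{frozen} non-local field $v(t,\mu(t),\cdot)$ rather than by $v(t,\mu_N(t),\cdot)$, so the resulting discrete pair is not admissible for $(\Ppazo_N)$ without a further closing argument.

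In \cite{Fornasier2019} the $\limsup$ is obtained differently: one first reduces to a dense class of sufficiently regular competitors, then builds the discrete recovery trajectories by solving the actual $N$-particle ODE with controls $u_i(t)=u(t,x_i(t))$ and uses stability to match the costs. The paper's own Remark immediately after the theorem flags precisely this step as the place where the structural assumption on $U$ (a subspace in \cite{Fornasier2019}, here convex and compact together with a projection argument) is genuinely used. Your outline would need an explicit density/regularisation layer before any flow-based construction can be invoked.
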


\begin{rmk}[Comparison between \ref{hyp:H} and the assumptions of \cite{Fornasier2019}]
In \cite{Fornasier2019}, it is assumed that $U \subset \R^d$ is a subspace of $\R^d$ in order to recover the $\Gamma-\limsup$ inequality in the proof of their main result Theorem 3.2. This hypothesis could be relaxed up to an additional projection argument by asking that $U$ is convex and closed. Besides, the requirements that $\psi(\cdot)$ is radial and super-linear at infinity are primarily used to recover integral bounds on the controls, which automatically hold in our context since we posit that the control set $U$ is compact.
\end{rmk}


\section{Proof of Theorem \ref{thm:MainResult1} and Theorem \ref{thm:MainResult2}}
\label{section:MainResult}

In this section, we prove the two main results of this article. We start by working with the discrete approximations $(\Ppazo_N)$ of $(\Ppazo)$ in order to prove Theorem \ref{thm:MainResult2}. We then proceed to recover Theorem \ref{thm:MainResult1} as a corollary, by formulating a sufficient condition under which \ref{hyp:CO_N} below holds.


\subsection{Mean-field coercivity estimate and proof of Theorem \ref{thm:MainResult2}}
\label{subsection:MainResult2} 

In this section, we start by proving Theorem \ref{thm:MainResult2}. We suppose that hypotheses \ref{hyp:H} of Section \ref{section:Fornasier2014} hold, along with the following additional \textit{mean-field coercivity} assumption.

\begin{taggedhypsing}{\textbn{(CO$_N$)}} 
\label{hyp:CO_N}
There exists a constant $\rho_T > 0$ such that for every mean-field optimal Pontryagin triple $(\xb^*_N(\cdot),\rb^*_N(\cdot),\ub^*_N(\cdot))$ for $(\Ppazo_N)$ defined in the sense of Proposition \ref{prop:MFPMP} below, the following coercivity estimate holds
\begin{equation*}
\begin{aligned}
\BHess{\xb} \Bvarphi_N[\xb_N^*(T)](\yb(T),\yb(T)) & - \INTSeg{\BHess{\xb} \H_N [t,\xb_N^*(t),\rb_N^*(t),\ub_N^*(t)](\yb(t) , \yb(t))}{t}{0}{T} \\
& - \INTSeg{\BHess{\ub} \H_N [t,\xb_N^*(t),\rb_N^*(t),\ub_N^*(t)](\wb(t) , \wb(t))}{t}{0}{T} \geq \rho_T \INTSeg{|\wb(t)|_N^2}{t}{0}{T}, 
\end{aligned}
\end{equation*}
along all the solutions $(\yb(\cdot),\wb(\cdot)) \in W^{1,2}([0,T],(\R^d)^N) \times L^2([0,T],U^N)$ of the linearised system
\begin{equation*}
\hspace{-0.1cm} \left\{
\begin{aligned}
& \dot y_i(t) = \D_x \vb_N(t,\xb^*_N(t),x_i^*(t)) y_i(t) + \tfrac{1}{N} \mathsmaller{\sum}_{j=1}^N \Db_{x_j} \vb_N(t,\xb_N^*(t),x_i^*(t)) y_j(t) + w_i(t), \\
& y_i(0) = 0 \hspace{1.23cm} \text{and} \hspace{1.15cm} \ub^*_N(t) + \wb(t) \in U^N ~~ \text{for $\Lcal^1$-almost every $t \in [0,T]$}.
\end{aligned}
\right.
\end{equation*}
\end{taggedhypsing}

Our argument is split into three steps. In Step 1, we write a PMP adapted to the mean-field structure of problem $(\Ppazo_N)$. We proceed by building in Step 2 a sequence of Lipschitz-in-space optimal control maps for the discrete problems $(\Ppazo_N)$ by combining Theorem \ref{thm:LipFeedback} and \ref{hyp:CO_N}. We then show in Step 3 that this sequence of control maps is compact in a suitable weak topology preserving its Lipschitz regularity in space, and that its limit point coincide with the mean-field optimal control introduced in Theorem \ref{thm:Fornasier2014}. 


\paragraph*{Step 1: Solutions of $(\Ppazo_N)$ and mean-field Pontryagin Maximum Principle.}

In this first step, we characterise and derive uniform estimates on the optimal pairs $(\xb^*_N(\cdot),\ub^*_N(\cdot))$ for $(\Ppazo_N)$. Our analysis is based on a reformulation of the PMP applied to $(\Ppazo_N)$ as a Hamiltonian flow with respect to the inner product $\langle \cdot , \cdot \rangle_N$. 

\begin{prop}[Characterisation of the solutions of $(\Ppazo_N)$]
\label{prop:MFPMP}
Let $(\xb_N^*(\cdot),\ub_N^*(\cdot)) \in \Lip([0,T],(\R^d)^N)) \times \U_N$ be an optimal trajectory-control pair for $(\Ppazo_N)$. Then, there exists a \textnormal{rescaled covector} $\rb_N^*(\cdot) \in \Lip([0,T],(\R^d)^N)$ such that $(\xb_N^*(\cdot),\rb_N^*(\cdot),\ub_N^*(\cdot))$ satisfies the \textnormal{mean-field Pontryagin Maximum Principle}
\begin{equation}
\label{eq:ModifiedPMP}
\left\{ 
\begin{aligned}
\dot \xb_N^*(t) & = \hspace{0.27cm} \BGrad{\rb} \, \H_N(t,\xb_N^*(t),\rb_N^*(t),\ub_N^*(t)), \quad \hspace{0.04cm} \xb_N^*(0) = \xb_N^0,\\
\dot \rb_N^*(t) & = -\BGrad{\xb} \, \H_N(t,\xb_N^*(t),\rb_N^*(t),\ub_N^*(t)), \quad \rb_N^*(T) = -\BGrad{\xb} \Bvarphi_N(\xb_N^*(T)), \\
\ub_N^*(t) & \in \underset{\vb \in U^N}{\textnormal{argmax}} ~ \H_N(t,\xb_N^*(t),\rb_N^*(t),\vb) \hspace{1.6cm} \text{for $\Lcal^1$-almost every $t \in [0,T]$},
\end{aligned}
\right.
\end{equation}
where the \textnormal{mean-field Hamiltonian} of the system is defined by 
\begin{equation}
\label{eq:Hamiltonien_MF}
\H_N(t,\xb,\rb,\ub) := \frac{1}{N} \sum_{i=1}^N \Big( \left\langle r_i , \vb_N(t,\xb,x_i) + u_i \right\rangle - \psi(u_i) \Big) - \Lb_N(t,\xb)
\end{equation} 
for all $(t,\xb,\rb,\ub) \in [0,T] \times (\R^d)^N \times (\R^d)^N \times U^N$. Furthermore, there exist uniform constants $R_T,L_T > 0$ which are independent of $N \geq 1$, such that 
\begin{equation}
\label{eq:ClaimMFMPMP}
\Graph \big( (\xb^*_N(\cdot),\rb^*_N(\cdot)) \big) \subset [0,T] \times B(0,R_T)^{2N} \qquad \text{and} \qquad \Lip \Big( (\xb^*_N(\cdot),\rb^*_N(\cdot)) \, ;[0,T] \Big) \leq L_T.
\end{equation}
\end{prop}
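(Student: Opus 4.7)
The plan is to apply the classical PMP of Section \ref{section:FiniteDimOC} to the finite-dimensional problem $(\Ppazo_N)$ -- which fulfils hypotheses \ref{hyp:Hoc} thanks to \ref{hyp:H} and Proposition \ref{prop:MF_Derivatives} -- and then to rescale the resulting costate so as to recast the optimality system in terms of the rescaled inner product $\langle\cdot,\cdot\rangle_N$. Writing the standard Hamiltonian of $(\Ppazo_N)$ as
\begin{equation*}
H(t,\xb,\pb,\ub) := \mathsmaller{\sum}_{i=1}^N \langle p_i, \vb_N(t,\xb,x_i) + u_i\rangle - \Lb_N(t,\xb) - \tfrac{1}{N}\mathsmaller{\sum}_{i=1}^N \psi(u_i),
\end{equation*}
one has the identity $H(t,\xb,\pb,\ub) = \H_N(t,\xb,N\pb,\ub)$. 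Since the Riesz representation of any linear form in $((\R^d)^N,\langle\cdot,\cdot\rangle_N)$ differs from its Euclidean counterpart by a factor of $N$, the mean-field gradients obey $(\BGrad{\xb}\H_N)_i = N\partial_{x_i}\H_N$, $(\BGrad{\rb}\H_N)_i = N\partial_{r_i}\H_N$, and $(\BGrad{\xb}\Bvarphi_N)_i = N\partial_{x_i}\Bvarphi_N$, the latter being consistent with \eqref{eq:MF_Gradient}. Setting $\rb_N^*(\cdot):=N\pb_N^*(\cdot)$, where $\pb_N^*(\cdot)$ is the Lipschitz covector produced by the classical PMP applied to $(\Ppazo_N)$, a direct chain-rule computation then transforms the classical Hamiltonian system and Pontryagin maximisation condition into the mean-field PMP \eqref{eq:ModifiedPMP} with the stated terminal condition on $\rb_N^*(T)$.

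For the uniform estimates in \eqref{eq:ClaimMFMPMP}, I would first bound the state variable: combining the sublinear growth granted by \ref{hyp:H}-(iii), the compactness of $U$, and the uniform compactness of the supports of $(\xb_N^0)$, the primal equation yields the pointwise estimate $|\dot x_i^*(t)|\le M(1+2\max_j|x_j^*(t)|) + C_U$ for some constant $C_U>0$. Gr\"onwall's lemma applied to $t\mapsto \max_i|x_i^*(t)|$ then produces an $N$-independent bound $R_T^\xb>0$, and the Lipschitz estimate in $|\cdot|_N$ follows from $|\dot\xb_N^*(t)|_N \le \max_i|\dot x_i^*(t)|$.

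The main obstacle -- and the very reason the rescaling $\rb_N^* := N\pb_N^*$ is introduced -- is to derive analogous uniform bounds on the costate. The key observation is that the factor of $N$ precisely compensates the $1/N$ picked up by classical partial derivatives of mean-field approximating functionals: at the final time, \eqref{eq:MF_Gradient} gives $r_{i,N}^*(T) = -\nabla_\mu\varphi(\mu_N^*(T))(x_i^*(T))$, which is uniformly bounded by the $\Cpazo^{2,1}_{\loc}$-Wasserstein regularity of $\varphi(\cdot)$ and the state bound above. Unfolding the mean-field Hamiltonian further recasts the adjoint equation as
\begin{equation*}
\dot r_{i,N}^*(t) = -\D_x \vb_N(t,\xb_N^*(t),x_i^*(t))^\top r_{i,N}^*(t) - \tfrac{1}{N}\mathsmaller{\sum}_{k=1}^N \Db_{x_i}\vb_N(t,\xb_N^*(t),x_k^*(t))^\top r_{k,N}^*(t) + \nabla_\mu L(t,\mu_N^*(t))(x_i^*(t)),
\end{equation*}
whose coupling term is an empirical mean of quantities uniformly bounded under \ref{hyp:H}-(iv),(v) and the state bound. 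A backwards Gr\"onwall argument on $t\mapsto \max_i|r_{i,N}^*(t)|$ then produces the desired bound $R_T^\rb>0$, the corresponding Lipschitz estimate being again immediate. Setting $R_T:=\max(R_T^\xb,R_T^\rb)$ and letting $L_T$ absorb both Lipschitz constants concludes the proof of \eqref{eq:ClaimMFMPMP}.
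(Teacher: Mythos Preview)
Your proposal is correct and follows essentially the same approach as the paper: apply the classical PMP to $(\Ppazo_N)$, rescale the costate via $\rb_N^*:=N\pb_N^*$ to recast the optimality system in the $\langle\cdot,\cdot\rangle_N$-structure, and then derive $N$-uniform bounds on state and costate by Gr\"onwall arguments exploiting the $\Cpazo^{2,1}_{\loc}$-Wasserstein regularity. The only cosmetic difference is that the paper handles the state bound in two steps (first bounding the empirical mean $\tfrac{1}{N}\sum_j|x_j^*(t)|$, then each $|x_i^*(t)|$), whereas you apply Gr\"onwall directly to $t\mapsto\max_i|x_i^*(t)|$; both are valid and yield the same conclusion.
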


\begin{proof}
By hypothesis \ref{hyp:H}-$(i)$, there exists a constant $R_U > 0$ such that $U \subset B(0,R_U)$. Together with the definition \eqref{eq:MF_ApproximatingSeq} of the approximating sequences and \ref{hyp:H}-$(iii)$, this implies  
\begin{equation}
\label{eq:MomentumEstimate1}
|x_i^*(t)| \leq |x_i^0| + \INTSeg{ \big| \vb_N(s,\xb_N^*(s),x_i^*(s)) + u_i^*(t) \big|}{s}{0}{t} \leq |x_i^0| + \INTSeg{M \Big( 1 + |x_i^*(t)| + \tfrac{1}{N} \mathsmaller{\sum}_{j=1}^N |x_j^*(t)| \Big)}{s}{0}{t} + R_U T, 
\end{equation}
for all times $t \in [0,T]$. By summing over the indices $i \in \{ 1,\dots,N\}$ and applying Gr\"onwall's Lemma, there exists a constant $A_T > 0$ independent of $N \geq 1$ such that  
\begin{equation}
\label{eq:MomentumEstimate2}
\max_{t \in [0,T]} \tfrac{1}{N} \mathsmaller{\sum}_{i=1}^N |x_i^*(t)| \leq A_T. 
\end{equation}
Plugging \eqref{eq:MomentumEstimate2} into \eqref{eq:MomentumEstimate1} and applying Gr\"owall's Lemma yet another time, we recover the existence of two constants $R_T^1,L_T^1 > 0$ independent of $N \geq 1$ such that for every index $i \in \{1,\dots,N\}$, it holds
\begin{equation}
\label{eq:SuppLipBound_State}
\max_{t \in [0,T]} |x_i^*(t)| \leq R_T^1 \qquad \text{and} \qquad \Lip(x_i^*(\cdot) \, ; [0,T]) \leq L_T^1.  
\end{equation} 

As a consequence of the standard PMP applied to $(\Ppazo_N)$ (see for instance \cite[Theorem 22.2]{Clarke}), there exists a family of costate curves $\pb^* : t \in [0,T] \mapsto (p_1^*(t),\dots,p_N^*(t)) \in (\R^d)^N$ such that 
\begin{equation}
\label{eq:PMPSimple}
\left\{
\begin{aligned}
\dot x_i^*(t) & = \hspace{0.25cm} \nabla_{p_i} \Hpazo_N(t,\xb^*(t),\pb^*(t),\ub^*(t)), \quad  \hspace{0.04cm} x_i^*(0) = x_i^0, \\
\dot p_i^*(t) & = -\nabla_{x_i} \Hpazo_N(t,\xb^*(t),\pb^*(t),\ub^*(t)), \quad p_i^*(T) = -\nabla_{x_i} \Bvarphi_N(\xb^*(T)), \\
u_i^*(t) & \in \underset{v \in U}{\textnormal{argmax}} ~ \left[ \langle p_i^*(t) , v \rangle - \tfrac{1}{N} \psi(v) \right], 
\end{aligned} 
\right.
\end{equation}
where the classical Hamiltonian of the system is defined as 
\begin{equation*}
\Hpazo_N(t,\xb,\pb,\ub) = \sum_{i=1}^N \left\langle p_i , \vb_N(t,\xb,x_i) + u_i \right\rangle - \frac{1}{N} \sum_{i=1}^N \psi(u_i) - \Lb_N(t,\xb), 
\end{equation*}
for every $(t,\xb,\pb,\ub) \in [0,T] \times (\R^d)^N \times (\R^d)^N \times U^N$. Introducing the rescaled curves $r_i^*(\cdot) := N p_i^*(\cdot)$, one has
\begin{equation}
\label{eq:MFPMP1}
\dot x_i^*(t) = N \nabla_{r_i} \H_N(t,\xb^*(t),\rb^*(t),\ub^*(t)) = \BGrad{r_i} \H_N(t,\xb^*(t),\rb^*(t),\ub^*(t)),
\end{equation}
\begin{equation}
\label{eq:MFPMP2}
\begin{aligned}
\dot r_i^*(t) & = -N\nabla_{x_i} \H_N(t,\xb^*(t),\rb^*(t),\ub^*(t)) = -\BGrad{x_i} \H_N(t,\xb^*(t),\rb^*(t),\ub^*(t)),
\end{aligned}
\end{equation}
\begin{equation}
\label{eq:MFPMP3}
r_i^*(T) = -N \nabla_{x_i} \Bvarphi(\xb^*(T)) = -\BGrad{x_i} \Bvarphi(\xb^*(T)), 
\end{equation}
where we used the definition of the mean-field gradient $\BGrad{}(\bullet)$ given in Proposition \ref{prop:MF_Derivatives}. Moreover, in this setting, the maximisation condition in \eqref{eq:PMPSimple} can be rewritten for $\Lcal^1$-almost every $t \in [0,T]$ as
\begin{equation*}
u_i^*(t) \in \textnormal{argmax}_{v \in U} \left[ \langle r_i^*(t) , v \rangle - \psi(v) \right].
\end{equation*}
Merging this condition with \eqref{eq:MFPMP1}, \eqref{eq:MFPMP2} and \eqref{eq:MFPMP3}, we recover that $(\xb^*(\cdot),\rb^*(\cdot),\ub^*(\cdot))$ satisfies the mean-field Pontryagin Maximum Principle \eqref{eq:ModifiedPMP} associated with the mean-field Hamiltonian $\H_N(\cdot,\cdot,\cdot,\cdot)$. 

We now prove an estimate akin to \eqref{eq:SuppLipBound_State} for the costate variable $(\rb^*_N(\cdot))$. Observe that, as a consequence of the uniform bounds of \eqref{eq:SuppLipBound_State} and Proposition \ref{prop:MF_Derivatives}, it holds for all times $t \in [0,T]$ and any $i,j \in \{1,\dots,N\}$ that
\begin{equation*}
|\BGrad{x_i} \Lb_N(t,\xb^*(t))| = |\nabla_{\mu} L(t,\mu[\xb^*_N(t)])(x_i^*(t))| \leq \max_{\mu \in \Pcal(B(0,R_T^1))} \NormC{\nabla_{\mu} L(t,\mu)(\cdot)}{0}{B(0,R_T^1),\R^d},
\end{equation*}
and 
\begin{equation*}
|\D_x \vb_N(t,\xb^*(t),x_i^*(t))| + |\Db_{x_j} \vb_N(t,\xb^*(t),x_i^*(t))| \leq \max_{\mu \in \Pcal(B(0,R_T^1))} \NormC{\D_x v(t,\mu,\cdot) + \D_{\mu} v(t,\mu,\cdot)(\cdot)}{0}{B(0,R_T^1)^2,\R^{d \times d}}.
\end{equation*}
By invoking the $\Cpazo^{2,1}_{\loc}$-Wasserstein regularity assumptions \ref{hyp:H}-$(iv),(v)$ and by Gr\"onwall's Lemma, we obtain  
\begin{equation}
\label{eq:MomentumEstimate4}
\max_{t \in [0,T]} |r_i^*(t)| \leq C' \Big( T + |\BGrad{x_i} \Bvarphi_N(\xb^*_N(T))| \Big) e^{C'T}
\end{equation}
for all $i \in \{ 1,\dots,N\}$, where $C' > 0$ is independent of $N\geq 1$. Again as a consequence of Proposition \ref{prop:MF_Derivatives}, it holds
\begin{equation*}
|\BGrad{x_i} \Bvarphi(\xb_N^*(T))| = |\nabla_{\mu} \varphi(\mu[\xb^*_N(T)])(x_i^*(T))| \leq \max_{\mu \in \Pcal(B(0,R_T^1))} \NormC{\nabla_{\mu} \varphi(\mu)(\cdot)}{0}{B(0,R^1_T),\R^d}, 
\end{equation*}
which is uniformly bounded by hypothesis \ref{hyp:H}-$(vi)$, so that 
\begin{equation}
\label{eq:MomentumEstimate5}
\max_{t \in [0,T]} |r_i^*(t)| \leq R_T^2 \qquad \text{and} \qquad \Lip(r_i^*(\cdot);[0,T]) \leq L_T^2, 
\end{equation}
for all $i \in \{1,\dots,N\}$ and some uniform constants $R_T^2,L_T^2 > 0$. Thus, we have shown that there exist two constants $R_T,L_T > 0$ independent of $N \geq 1$, such that 
\begin{equation*}
\Graph \Big( (\xb^*(\cdot),\rb^*(\cdot)) \Big) \subset [0,T] \times B(0,R_T)^{2N} \qquad \text{and} \qquad \Lip \Big( (\xb^*(\cdot),\rb^*(\cdot)) \, ;[0,T] \Big) \leq L_T.
\end{equation*}
This concludes the proof of Proposition \ref{prop:MFPMP}.
\end{proof}

We end the first step of our proof by a simple corollary in which we provide a common Lipschitz constant for all the maps involved in $(\Ppazo_N)$ that is uniform with respect to $N \geq 1$. 

\begin{cor}
\label{cor:UniformLip}
Let $\K := [0,T] \times B(0,R_T)^{2N} \times U^N$ where $R_T > 0$ is defined as in Proposition \ref{prop:MFPMP}. Then, there exists a constant $\Lpazo_{\K} > 0$ such that 
\begin{equation*}
t \mapsto \H_N(t,\xb,\rb,\ub) \qquad \text{and} \qquad t \mapsto \Lb_N(t,\xb),
\end{equation*}
are bounded by $\Lpazo_{\K}$ and $\Lpazo_{\K}$-Lipschitz over $[0,T]$ uniformly with respect to $(\xb,\rb,\ub) \in B(0,R_T)^{2N} \times U^N$, and such that the $C^{2,1}_N$-norms defined in the sense of \eqref{eq:C2Norm} of the maps
\begin{equation*}
(\xb,\ub) \mapsto \H_N(t,\xb,\rb,\ub), \qquad \xb \mapsto \Lb_N(t,\xb), \qquad \ub \mapsto \tfrac{1}{N}\mathsmaller{\sum}_{i=1}^N \psi(u_i) \qquad \text{and} \qquad \xb \mapsto \Bvarphi_N(\xb),
\end{equation*}
are bounded by $\Lpazo_{\K}$ over $B(0,R_T)^N \times U^N$, uniformly with respect to $(t,\rb) \in [0,T] \times B(0,R_T)^N$. 
\end{cor}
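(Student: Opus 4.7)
The plan is to reduce each assertion to a uniform-in-$N$ bound stemming from the continuum data of $(\Ppazo)$ via the correspondence recorded in Proposition \ref{prop:MF_Derivatives}. The key preliminary observation is that, by construction, $\K = [0,T] \times B(0,R_T)^{2N} \times U^N$ consists of $N$-tuples with coordinates in the fixed compact set $B(0,R_T) \cup U \subset \R^d$, so the empirical measures $\mu[\xb]$ appearing below all belong to the $W_2$-compact set $\Pcal(B(0,R_T))$, independently of $N$.

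First I would settle the Lipschitz-in-$t$ and uniform boundedness claims. Since $\Lb_N(t,\xb) = L(t,\mu[\xb])$ by \eqref{eq:MF_ApproximatingSeq}, both $\max_{[0,T] \times B(0,R_T)^N} |\Lb_N|$ and the Lipschitz constant of $t \mapsto \Lb_N(t,\xb)$ over $[0,T]$ are dominated by the corresponding quantities for $L(\cdot,\cdot)$ on $[0,T] \times \Pcal(B(0,R_T))$, which are finite by \ref{hyp:H}-$(v)$. For $\H_N$ the only ingredient beyond $\Lb_N$ is the term $\tfrac{1}{N} \sum_i \big( \langle r_i, \vb_N(t,\xb,x_i) + u_i \rangle - \psi(u_i) \big)$; by Cauchy--Schwarz in $\langle \cdot,\cdot \rangle_N$ combined with the sublinearity and $t$-Lipschitz regularity of $v$ from \ref{hyp:H}-$(iii)$ together with the compactness of $U$, both its sup-norm and its $t$-Lipschitz constant over $\K$ are bounded uniformly in $N$.

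For the $C^{2,1}_N$-norm statements, $\xb \mapsto \Lb_N(t,\xb)$ and $\xb \mapsto \Bvarphi_N(\xb)$ are pure mean-field approximating sequences, and inequality \eqref{eq:C2_Correspondance} of Proposition \ref{prop:MF_Derivatives} immediately yields
\begin{equation*}
\Norm{\Lb_N(t,\cdot)}_{C^{2,1}_N(B(0,R_T)^N)} \leq \Norm{L(t,\cdot)}_{\Cpazo^{2,1}(B(0,R_T))}, \quad \Norm{\Bvarphi_N}_{C^{2,1}_N(B(0,R_T)^N)} \leq \Norm{\varphi}_{\Cpazo^{2,1}(B(0,R_T))},
\end{equation*}
whose right-hand sides are finite and uniform in $(t,N)$ by \ref{hyp:H}-$(v),(vi)$. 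For the separable map $\ub \mapsto \tfrac{1}{N}\sum_i \psi(u_i)$, a direct differentiation produces the mean-field gradient $(\nabla\psi(u_j))_{1 \leq j \leq N}$ and the mean-field Hessian bilinear form $(\hb^1,\hb^2) \mapsto \tfrac{1}{N} \sum_i \langle \nabla^2 \psi(u_i) h^1_i, h^2_i \rangle$; their $|\cdot|_N$-norms and the associated Lipschitz seminorms are all controlled by $\Norm{\psi}_{C^{2,1}(U)}$, which is finite by \ref{hyp:H}-$(i),(ii)$.

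The only term that requires extra care is $\Phi_N(\xb) := \tfrac{1}{N} \sum_i \langle r_i, v(t,\mu[\xb], x_i) \rangle$ inside $\H_N$, because it is not a pure mean-field approximating sequence: the dependence on $\xb$ enters both through $\mu[\xb]$ and through the slot variable $x_i$. For each fixed $y \in \R^d$ however, $\xb \mapsto v(t,\mu[\xb], y)$ is such a sequence by \ref{hyp:H}-$(iv)$, and combining the identity $\tderv{}{s} v(t,\mu[\xb+s\hb],y)_{|s=0} = \tfrac{1}{N} \sum_k \D_\mu v(t,\mu[\xb],y)(x_k) h_k$ established within the proof of Proposition \ref{prop:MF_Derivatives} with the classical chain rule in the slot yields the explicit identity
\begin{equation*}
(\BGrad{\xb} \Phi_N(\xb))_j = \D_x v(t,\mu[\xb],x_j)^\top r_j + \tfrac{1}{N} \sum_{i=1}^N \D_\mu v(t,\mu[\xb],x_i)(x_j)^\top r_i,
\end{equation*}
together with an analogous bilinear expression for $\BHess{\xb} \Phi_N[\xb]$ involving $\D_x^2 v$, $\D_x \D_\mu v$ and $\D_\mu^2 v$. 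The main obstacle is then purely combinatorial: one must verify that the $1/N$ and $1/N^2$ prefactors in these explicit formulas exactly compensate the single and double summations, so that the $|\cdot|_N$-operator norms and the associated Lipschitz seminorms stay bounded by a constant depending only on $R_T$, $T$ and the $\Cpazo^{2,1}$-Wasserstein norm of $v(t,\cdot,\cdot)$ on $B(0,R_T)$, which is finite by \ref{hyp:H}-$(iii),(iv)$. Setting $\Lpazo_\K$ equal to the maximum of the finitely many uniform constants collected above then closes the proof.
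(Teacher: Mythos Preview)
Your proposal is correct and follows the same underlying approach as the paper, namely invoking hypotheses \ref{hyp:H}-$(iii)$--$(vi)$ together with the estimate \eqref{eq:C2_Correspondance} of Proposition \ref{prop:MF_Derivatives}. The paper's own proof is a one-line reference to these ingredients, whereas you spell out the details; in particular, your observation that $\xb \mapsto \tfrac{1}{N}\sum_i \langle r_i, v(t,\mu[\xb],x_i)\rangle$ is not literally a mean-field approximating sequence in the sense of Definition \ref{def:MF_Adapted} (because of the double dependence on $\xb$ through $\mu[\xb]$ and through the slot $x_i$) and therefore requires an explicit gradient/Hessian computation with a check on the $1/N$ and $1/N^2$ scalings, is a point the paper leaves implicit.
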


\begin{proof}
This result follows directly from the the Lipschitz regularity \ref{hyp:H}-$(iii)$ of the velocity field and the $\Cpazo^{2,1}_{\loc}$-Wasserstein regularity hypotheses \ref{hyp:H}-$(iv),(v),(vi)$, along with the estimate \eqref{eq:C2_Correspondance} of Proposition \ref{prop:MF_Derivatives}. 
\end{proof}


\paragraph*{Step 2 : Construction of Lipschitz-in-space optimal controls for $(\Ppazo_N)$.}

In this second step, we associate to any optimal pair $(\xb_N^*(\cdot),\ub^*_N(\cdot))$ for $(\Ppazo_N)$ a mean-field optimal control map $u^*_N \in \Lip([0,T] \times \R^d,U))$, which Lipschitz constant with respect to the space variable is uniformly bounded with respect to $N \geq 1$. 

\begin{prop}[Existence of locally optimal uniformly-Lipschitz feedbacks for $(\Ppazo_N)$]
\label{prop:MFLip_Feedback}
Assume that hypotheses \ref{hyp:H} hold and let $(\xb^*_N(\cdot),\rb_N^*(\cdot),\ub^*_N(\cdot)) \in \Lip([0,T],B(0,R_T)^N) \times \U_N$ be an optimal Pontryagin triple for $(\Ppazo_N)$ in the sense of Proposition \ref{prop:MFPMP} along which the mean-field coercivity estimate \ref{hyp:CO_N} holds. 

Then, for any $N \geq 1$, there exists a Lipschitz map $u^*_N(\cdot,\cdot) \in \Lip([0,T]\times \R^d,U)$ such that 
\begin{equation*}
u_N^*(t,x_i(t)) = u^*_i(t) \qquad \text{and} \qquad \Lip(u^*_N(t,\cdot) \, ; \R^d) \leq \Lpazo_U, 
\end{equation*}
for all times $t \in [0,T]$, where $\Lpazo_U > 0$ is independent of $N \geq 1$. 
\end{prop}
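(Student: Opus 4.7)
The plan is to reinterpret $(\Ppazo_N)$ as a finite-dimensional optimal control problem on the Hilbert space $\bigl((\R^d)^N,\langle\cdot,\cdot\rangle_N\bigr)$, apply Theorem \ref{thm:LipFeedback} there, and project the resulting $N$-particle feedback back to a map on $\R^d$. Proposition \ref{prop:MFPMP} already identifies the mean-field PMP \eqref{eq:ModifiedPMP} with the classical PMP \eqref{eq:Hamiltonian_FiniteDim}--\eqref{eq:maximisation_FiniteDim} for this lifted problem with Hamiltonian $\H_N$, and Remark \ref{rmk:MF_Hessian} shows that the mean-field Hessians $\BHess{\xb}$ and $\BHess{\ub}$ coincide with the ordinary Hessians of $\H_N$ and $\Bvarphi_N$ computed with respect to $\langle\cdot,\cdot\rangle_N$. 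Hypothesis \ref{hyp:CO_N} is then exactly the uniform coercivity estimate \eqref{eq:Coercivity}--\eqref{eq:linearised_FiniteDim} of Definition \ref{def:UniformCoercivity}, with a constant $\rho_T>0$ that does not depend on $N$.

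Proposition \ref{prop:MFPMP} simultaneously places the states and costates of any optimal triple in the common compact $B(0,R_T)^N$ with uniform Lipschitz-in-time bound $L_T$, and Corollary \ref{cor:UniformLip} supplies a single $C^{2,1}_N$-bound $\Lpazo_{\K}$ for all relevant data of $(\Ppazo_N)$. Applying Theorem \ref{thm:LipFeedback} in $\bigl((\R^d)^N,\langle\cdot,\cdot\rangle_N\bigr)$ therefore produces an open neighbourhood $\Npazo_N\subset[0,T]\times(\R^d)^N$ of $\Graph(\xb_N^*(\cdot))$ and a locally optimal Lipschitz feedback $\bar{\ub}_N\colon\Npazo_N\to U^N$ with $\bar{\ub}_N(t,\xb_N^*(t))=\ub_N^*(t)$, whose Lipschitz constant with respect to $|\cdot|_N$ depends only on $\Lpazo_{\K}$ and $\rho_T$, and is therefore bounded by some constant $\Lpazo>0$ independent of $N$.

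It then remains to project $\bar{\ub}_N$ from $(\R^d)^N$ down to a feedback on $\R^d$. I would first define $u_N^*$ on the trajectory set $\bigcup_{i=1}^N \Graph(x_i^*(\cdot))\subset[0,T]\times\R^d$ by $u_N^*(t,x_i^*(t)):=u_i^*(t)$, and then extend by a standard Lipschitz extension into the convex compact $U$ (Kirszbraun's theorem composed with the projection onto $U$, or equivalently Valentine's theorem). The uniform-in-$N$ Lipschitz bound on the partial map rests on the cancellation of $\sqrt{N}$-factors engineered into the rescaled Hilbert structure: a perturbation of the $i$-th coordinate of $\xb_N^*(t)$ by $h\in\R^d$ has size $|h|/\sqrt{N}$ in $|\cdot|_N$, while any individual output component of $\bar{\ub}_N$ can vary by at most $\sqrt{N}$ times its $|\cdot|_N$-variation, so the Lipschitz constant of the frozen-coordinate restriction of $\bar{\ub}_N$ is precisely $\Lpazo$, independently of $N$. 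Combining this with the symmetry of $\vb_N,\Lb_N,\Bvarphi_N$ under block permutations and with the uniqueness of $\bar{\ub}_N$ implied by strict convexity of $\psi$ together with \ref{hyp:CO_N}, the same argument furnishes a crossing estimate $|u_i^*(t)-u_j^*(t)|\leq\sqrt{2}\,\Lpazo\,|x_i^*(t)-x_j^*(t)|$ that simultaneously makes the assignment consistent at coinciding particles and supplies the spatial Lipschitz bound on the trajectory set; the Lipschitz-in-time regularity then follows from \eqref{eq:ClaimMFMPMP}. I expect this last projection step, in which the competing powers of $\sqrt{N}$ must be tracked and the symmetry/uniqueness of the PMP exploited where distinct optimal particles approach each other, to be the main obstacle, and this is precisely what the introduction refers to as the ``delicate projection arguments''.
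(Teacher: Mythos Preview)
Your overall strategy coincides with the paper's: recast $(\Ppazo_N)$ as a problem in $((\R^d)^N,\langle\cdot,\cdot\rangle_N)$, invoke Theorem~\ref{thm:LipFeedback} with the uniform data bounds of Corollary~\ref{cor:UniformLip} and the coercivity constant $\rho_T$ from \ref{hyp:CO_N}, exploit the cancellation of the $\sqrt{N}$-factors when only a single coordinate of $\xb_N^*(t)$ is perturbed, and finally extend to $\R^d$ by a Kirszbraun/McShane-type argument combined with projection onto $U$. The paper carries this out exactly: it defines the per-agent feedbacks $\tilde u_i(t,x)=\bar{\ub}_N^{\,i}(t,\hat\xb_i^{\,x}(t))$ where $\hat\xb_i^{\,x}(t)$ replaces only the $i$-th coordinate of $\xb_N^*(t)$ by $x$, and verifies the $N$-independent Lipschitz bound through the computation you describe.

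The only substantive divergence is in how you obtain the crossing estimate $|u_i^*(t)-u_j^*(t)|\lesssim|x_i^*(t)-x_j^*(t)|$. You argue by permutation equivariance of the feedback, comparing $\bar{\ub}_N(t,\xb_N^*(t))$ with $\bar{\ub}_N(t,\sigma_{ij}\xb_N^*(t))$. This is clean, but it requires $(t,\sigma_{ij}\xb_N^*(t))\in\Npazo_N$, i.e.\ that the tube radius $\epsilon$ in Theorem~\ref{thm:LipFeedback} exceeds $|\sigma_{ij}\xb_N^*(t)-\xb_N^*(t)|_N=\sqrt{2/N}\,|x_i^*(t)-x_j^*(t)|$. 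Theorem~\ref{thm:LipFeedback} asserts that the Lipschitz constant of the feedback depends only on $\Lpazo_{\K}$ and $\rho$, but says nothing of the sort about $\epsilon$, so this step is not justified as stated. The paper instead avoids the permuted state altogether: it uses item~(d) (local optimality) of Theorem~\ref{thm:LipFeedback} to argue that whenever $x_j^*(\tau)\in\Npazo_i$ the trajectory $x_j^*$ must follow the feedback $\tilde u_i$, so $u_j^*(t)=\tilde u_i(t,x_j^*(t))$ there; this yields the crossing bound via the already-established Lipschitz property of $\tilde u_i$, rules out collisions, lets the $\Npazo_i$ be taken disjoint, and then invokes McShane's extension. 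In short, your proposal is correct in architecture, but at the one point you flag as delicate the paper trades your symmetry argument for the local optimality clause of Theorem~\ref{thm:LipFeedback}, which does not require any control on the size of $\epsilon$.
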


\begin{proof}
Recall that first that by Corollary \ref{cor:UniformLip}, the bounded-Lipschitz norms in $t \in [0,T]$ and the $C^{2,1}_N$-norms in $(\xb,\ub) \in B(0,R_T)^N \times U^N$ of the datum of $(\Ppazo_N)$ are uniformly bounded over $\K = [0,T] \times B(0,R_T)^{2N} \times U^N$ by a constant $\Lpazo_{\K} > 0$. As mentioned in Section \ref{section:FiniteDimOC}, Theorem \ref{thm:LipFeedback} can be applied in $((\R^d)^N,\langle \cdot,\cdot \rangle_N)$ provided that \ref{hyp:CO_N} is indeed a strong positive-definiteness condition for the canonical linearised problem associated to $(\Ppazo_N)$. To verify this, consider $(\yb(\cdot),\sbold(\cdot),\wb(\cdot)) \in W^{1,2}([0,T],(\R^d)^N) \times W^{1,2}([0,T],(\R^d)^N) \times L^2(([0,T],U^N)$ such that $\ub_N^*(t) + \wb(t) \in U^N$ for $\Lcal^1$-almost every $t \in [0,T]$ and $i \in \{1,\dots,N\}$. Then, it holds
\begin{equation}
\label{eq:WassLin1}
\begin{aligned}
\vb_N (t,\xb_N^*(t) + \yb(t), x_i^*(t) +  y_i(t)) & = \vb_N (t , \xb_N^*(t) , x_i^*(t) \big) + \D_x \vb_N \big( t,\xb_N^*(t),x_i^*(t) ) y_i(t) \\
& \hspace{0.4cm} + \tfrac{1}{N} \mathsmaller{\sum}\limits_{j=1}^N \Db_{x_j} \vb_N ( t,\xb_N^*(t),x_i^*(t)) y_j(t) + o(|y_i(t)|) + o(|\yb(t)|_N), 
\end{aligned}
\end{equation}
for all times $t \in [0,T]$, where $\Db_{x_j} \vb_N(t,\xb,x_i)$ is the matrix whose rows are the mean-field gradients with respect to $x_j$ of the components $\xb \mapsto \vb^k_N(t,\xb,x_i)$ for $k \in \{1,\dots,d\} \in \R^d$. Analogously, one also has\footnote{Here for convenience, we use the matrix representation \eqref{eq:MF_HessianMatrix} introduced in Remark \ref{rmk:MF_Hessian} for mean-field Hessians.}
\begin{equation}
\label{eq:WassLin2}
\begin{aligned}
\BGrad{\xb} \H_N(t,& \, \xb_N^*(t)+\yb(t),\rb_N^*(t)+\sbold(t),\ub_N^*(t)+\wb(t)) \\
& = \BGrad{\xb} \H_N(t,\xb_N^*(t),\rb_N^*(t),\ub_N^*(t)) + \BHess{\xb} ~ \H_N(t,\xb_N^*(t),\rb_N^*(t),\ub_N^*(t))  \yb(t) \\
& \hspace{0.4cm} + \BHess{\rb \xb} \H_N(t,\xb_N^*(t),\rb_N^*(t),\ub_N^*(t))  \sbold(t) + o(|\yb(t)|_N) + o(|\wb(t)|_N),
\end{aligned}
\end{equation}
\begin{equation}
\label{eq:WassLin3}
\begin{aligned}
\BGrad{\ub} \H_N(t,& \, \xb_N^*(t)+\yb(t),\rb_N^*(t)+\sbold(t),\ub_N^*(t)+\wb(t)) \\
& = \BGrad{\ub} \H_N(t,\xb_N^*(t),\rb_N^*(t),\ub_N^*(t))+ \BHess{\ub} ~ \H_N(t,\xb_N^*(t),\rb_N^*(t),\ub_N^*(t))  \wb(t) \\
& \hspace{0.4cm} + \BHess{\rb \ub} \H_N(t,\xb_N^*(t),\rb_N^*(t),\ub_N^*(t))  \sbold(t) + o(|\sbold(t)|_N) + o(|\wb(t)|_N),
\end{aligned}
\end{equation}
\begin{equation}
\label{eq:WassLin4}
\BGrad{\xb} \Bvarphi_N(\xb_N^*(T) + \yb(T)) = \BGrad{\xb} \Bvarphi(\xb_N^*(T)) + \BHess{\xb} \Bvarphi_N(\xb_N^*(T))  \yb(T) + o(|\yb(T)|_N), 
\end{equation}
as a consequence of the chain rule of Proposition \ref{prop:MF_Derivatives}. Following \cite{Dontchev1993}, it can be checked that the first-order linearisation of the optimality system \eqref{eq:ModifiedPMP} obtain by combining \eqref{eq:WassLin1}, \eqref{eq:WassLin2}, \eqref{eq:WassLin3} and \eqref{eq:WassLin4} is the PMP of 
\begin{equation*}
(\Ppazo_N') ~
\left\{
\begin{aligned}
\min_{\wb(\cdot) \in \U'_N} \hspace{-0.1cm} & \left[ \INTSeg{\left( \tfrac{1}{2} \langle \Ab(t)  \yb(t) , \yb(t) \rangle_N \hspace{-0.06cm} + \hspace{-0.06cm} \tfrac{1}{2} \langle \Bb(t)  \wb(t) , \wb(t) \rangle_N \right)}{t}{0}{T} \hspace{-0.06cm} + \hspace{-0.06cm} \tfrac{1}{2} \langle \Cb(T)  \yb(T),\yb(T) \rangle_N \right] \\
\text{s.t.} & \left\{ 
\begin{aligned}
\dot y_i(t) &  = \D_x \vb_N(t,\xb^*_N(t),x_i^*(t))y_i(t) + \tfrac{1}{N} \mathsmaller{\sum}\limits_{j=1}^N \Db_{x_j} \vb_N(t,\xb^*_N(t),x_i^*(t)) y_j(t), \\
y_i(0) & = 0, 
\end{aligned}
\right.
\end{aligned}
\right.
\end{equation*}
where the set of admissible controls is defined by 
\begin{equation*}
\U'_N = \left\{ v \in L^{\infty}([0,T],U^N) ~\text{s.t.}~ \ub^*_N(t) + \wb(t) \in U^N ~ \text{for $\Lcal^1$-almost every $t \in [0,T]$} \right\},
\end{equation*}
and the matrices defining the cost functionals write
\begin{equation*}
\left\{
\begin{aligned}
\Ab(t) & = -\BHess{\xb} \H_N(t,\xb^*_N(t),\rb^*_N(t),\ub^*_N(t)),~~ \Cb(T) =\BHess{\xb} \Bvarphi_N(\xb^*_N(T)), \\
\Bb(t) & = -\BHess{\ub} \H_N(t,\xb^*_N(t),\rb^*_N(t),\ub^*_N(t)), \\
\end{aligned}
\right. 
\end{equation*}
for all times $t \in [0,T]$. Thus, the coercivity estimate \ref{hyp:CO_N} is indeed a strong positive-definiteness condition for $(\Ppazo_N')$ expressed in terms of the differential structure of $((\R^d)^N,\langle \cdot,\cdot \rangle_N)$. Hence, by Theorem \ref{thm:LipFeedback} applied to $(\Ppazo_N)$, there exists a neighbourhood $\Ncal \subset [0,T] \times (\R^d)^N$ of $\text{Graph}(\xb^*(\cdot))$ and a locally optimal feedback 
\begin{equation}
\label{eq:ProjectedControl_Def}
\tilde{\ub}_N : \Big( [0,T] \times B(0,R_T)^N \Big) \cap \Npazo \rightarrow U^N, 
\end{equation}
such that $\tilde{\ub}_N(t,\xb^*(t)) = \ub^*_N(t)$ for all times $t \in [0,T]$ and 
\begin{equation}
\label{eq:LipschitzEst1}
\big| \tilde{\ub}_N(t,\xb) - \tilde{\ub}_N(s,\yb) \big|_N \leq \Lpazo_U' \big( |t-s| + |\xb - \yb|_N \big), 
\end{equation}
for any $(t,\xb),(s,\yb) \in \Npazo$, where $\Lpazo_U' > 0$ depends only on the structural constant $\Lpazo_{\K}$ introduced in Corollary \ref{cor:UniformLip} and on the coercivity constant $\rho_T$ exhibited in \ref{hyp:CO_N}. In particular, $\Lpazo_U'$ is independent of $N \geq 1$. 

For any $i \in \{ 1,\dots,N \}$, we can in turn associate to each agent trajectory $x^*_i(\cdot)$ the projected control map
\begin{equation*}
\tilde{u}_i : (t,x) \in \Npazo_i \mapsto \tilde{\ub}_N^i(t,\hat{\xb}_i^x(t)), 
\end{equation*}
for any $(t,x) \in \Npazo_i$, where we introduced the notation
\begin{equation}
\label{eq:GluedState}
\hat{\xb}_i^x(t) := (x_1^*(t),\dots,x_{i-1}^*(t),x,x_{i+1}^*(t),\dots,x_N^*(t)), 
\end{equation} 
and where the agent-based neighbourhoods $\Npazo_i \subset [0,T] \times \R^d$ are defined by 
\begin{equation*}
\Npazo_i := \Big\{ (t,x) \in [0,T] \times \R^d ~\text{s.t.}~ \hat{\xb}_i^x(t) \in \Npazo \Big\}.
\end{equation*}
These sets are well-defined and non-empty, since the projection operations onto coordinates are open mappings. Moreover, for any $t \in [0,T]$ and $x,y \in \R^d$ such that $(t,x),(t,y) \in \Npazo_i$, it holds
\begin{equation}
\label{eq:ProjectedControl_Lip}
\begin{aligned}
|\tilde{u}_i(t,x) - \tilde{u}_i(t,y)| & = \big| \tilde{\ub}_N^i(t,\hat{\xb}_i^x(t)) - \tilde{\ub}_N^i(t,\hat{\xb}_i^y(t)) \big| \\
& \leq \bigg( \, \mathsmaller{\sum}\limits_{j=1}^N |\tilde{\ub}_N^j(t,\hat{\xb}_i^x(t)) - \tilde{\ub}_N^j(t,\hat{\xb}_i^y(t))|^2 \bigg)^{1/2} \\
& = \sqrt{N} \, \big| \tilde{\ub}_N(t,\hat{\xb}_i^x(t)) - \tilde{\ub}_N(t,\hat{\xb}_i^y(t)) \big|_N ~~ \leq \sqrt{N} \Lpazo_U' |\hat{\xb}^x_i(t) - \hat{\xb}^y_i(t)|_N,
\end{aligned}
\end{equation}
as a consequence of \eqref{eq:LipschitzEst1}. Observe now that by \eqref{eq:GluedState}, the quantity $|\hat{\xb}^y_i(t) - \hat{\xb}^x_i(t)|_N$ can be further estimated as 
\begin{equation}
\label{eq:LipschitzEst2}
|\hat{\xb}^x_i(t) - \hat{\xb}^y_i(t)|_N = \bigg( \tfrac{1}{N} \mathsmaller{\sum}\limits_{j=1}^N \big| (\hat{\xb}_i^x(t))_j - (\hat{\xb}_i^y(t))_j \big|^2 \bigg)^{1/2} = ~ \tfrac{1}{\sqrt{N}} |y-x|, 
\end{equation}
for all $t \in [0,T]$, since $(\hat{\xb}_i^x(t))_j = (\hat{\xb}_i^y(t))_j = x_j^*(t)$ for any $j \neq i$. By merging \eqref{eq:ProjectedControl_Lip} and \eqref{eq:LipschitzEst2}, we recover that the maps $\tilde{u}_i(\cdot,\cdot)$ defined in \eqref{eq:ProjectedControl_Def} are $\Lpazo_U'$-Lipschitz in space over $\Npazo_i$ for any $i \in \{1,\dots,N\}$. 

To conclude the proof of Proposition \ref{prop:MFLip_Feedback}, there remains to ``patch together'' the locally optimal agent feedbacks $\tilde{u}_i(\cdot,\cdot)$ defined above. First, observe that since the maps $x \mapsto \tilde{u}_i(t,x) \in U$ are Lipschitz for any $i \in \{1,\dots,N\}$, all the individual agent trajectories are solution of the well-posed Cauchy-Lipschitz ODEs
\begin{equation*}
\dot x_i^*(t) = \vb_N(t,\xb_N^*(t),x_i^*(t)) + \tilde{u}_i(t,x_i^*(t)), 
\end{equation*}
for $\Lcal^1$-almost every $t \in [0,T]$. Besides, if $x_j^*(\tau) \in \Npazo_i$ for some time $\tau \in [0,T]$ with $j \neq i$, then the fact that $\tilde{u}_i(\cdot,\cdot)$ is a locally optimal feedback necessarily implies that $u_j^*(t) = \tilde{u}_i(t,x_j^*(t))$ for all times $t \in [\tau,T]$ such that $x_j^*(t) \in \Npazo_i$. Thus, no finite-time collisions can occur between agents, so that the sets $\Npazo_i$ can be chosen to be disjoint and the map 
\begin{equation*}
u_N^* : (t,x) \in \bigcup_{i=1}^N \overline{\Npazo_i} \mapsto \tilde{u}_i(t,x) \in U \quad \text{whenever $(t,x) \in \overline{\Npazo_i}$}, 
\end{equation*} 
is well-defined. By using McShane's Extension Theorem (see e.g. \cite[Theorem 3.1]{EvansGariepy}) combined with a projection on the convex and compact set $U \subset \R^d$, one can define a global optimal control map $u^*_N : [0,T] \times \R^d \rightarrow U$ such that $u^*_N(t,x_i^*(t)) = u_i^*(t)$ for all $t \in [0,T]$ and 
\begin{equation*}
\Lip(u_N^*(t,\cdot);\R^d) \leq \Lpazo_U,
\end{equation*}
for $\Lcal^1$-almost every $t \in [0,T]$, where the new Lipschitz constant is $\Lpazo_U := \sqrt{d} \Lpazo_U'$.
\end{proof}


\paragraph*{Step 3 : Existence of Lipschitz optimal controls for problem $(\Ppazo)$.}

In this third step, we show that the sequence of optimal maps $(u_N^*(\cdot,\cdot))$ constructed in Proposition \ref{prop:MFLip_Feedback} is compact in a suitable topology and that the limits along subsequences are optimal solutions of problem $(\Ppazo)$. 

\begin{lem}[Compactness of Lipschitz-in-space optimal maps]
\label{lem:Compactness}
Let $\Lpazo_U > 0$ be a positive constant and $\Omega \subset \R^d$ be a bounded set. Then, the set 
\begin{equation*}
\U_{\Lpazo_U} = \Big\{ u(\cdot,\cdot) \in L^2([0,T],W^{1,\infty}(\Omega,U)) ~\text{s.t.}~ \sup\limits_{t \in [0,T]} \Norm{u^*(t,\cdot)}_{W^{1,\infty}(\Omega,\R^d)} \leq  \Lpazo_U \Big\},
\end{equation*}
is compact in the weak $L^2([0,T],W^{1,p}(\Omega,\R^d))$-topology for any $p \in (1,+\infty)$. 
\end{lem}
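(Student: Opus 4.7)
The plan is to combine reflexivity of $L^2([0,T],W^{1,p}(\Omega,\R^d))$ with Mazur's theorem, after verifying that $\U_{\Lpazo_U}$ is a bounded, convex, and strongly closed subset of this space.

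\textbf{Boundedness and reflexivity.} Since $\Omega$ is bounded, the continuous embedding $W^{1,\infty}(\Omega) \hookrightarrow W^{1,p}(\Omega)$ yields $\Norm{u(t,\cdot)}_{W^{1,p}(\Omega,\R^d)} \leq |\Omega|^{1/p} \Lpazo_U$ for $\Lcal^1$-almost every $t \in [0,T]$, whence $\U_{\Lpazo_U}$ is bounded in $L^2([0,T],W^{1,p}(\Omega,\R^d))$. For any $p \in (1,+\infty)$, $W^{1,p}(\Omega)$ is reflexive and separable, and so is the Bochner space $L^2([0,T],W^{1,p}(\Omega,\R^d))$. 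By the Banach-Alaoglu and Eberlein-\v{S}mulian theorems, every sequence in $\U_{\Lpazo_U}$ admits a weakly convergent subsequence in this Bochner space.

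\textbf{Convexity and strong closedness.} Convexity of $\U_{\Lpazo_U}$ is immediate from the convexity of $U$ together with that of the $W^{1,\infty}$-unit ball. To establish strong closedness, take $(u_n) \subset \U_{\Lpazo_U}$ converging to some $u$ in $L^2([0,T],W^{1,p}(\Omega,\R^d))$. Along a subsequence, $u_n(t,\cdot) \to u(t,\cdot)$ in $W^{1,p}(\Omega,\R^d)$ for $\Lcal^1$-almost every $t \in [0,T]$, hence in $L^p(\Omega,\R^d)$, and, along a further subsequence, $\Lcal^d$-almost everywhere on $\Omega$. Each $u_n(t,\cdot)$ admits an $\Lpazo_U$-Lipschitz representative taking values in the closed convex set $U$, so the pointwise limit inherits both properties on a set of full $\Lcal^d$-measure and extends uniquely to an $\Lpazo_U$-Lipschitz, $U$-valued map on $\overline{\Omega}$. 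This shows $u \in \U_{\Lpazo_U}$. Since $\U_{\Lpazo_U}$ is convex and strongly closed, Mazur's theorem implies it is weakly closed, and weak compactness follows by combining this with the boundedness derived above.

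\textbf{Main obstacle.} The only non-routine point is the preservation of the $W^{1,\infty}$-bound in the limit, which is an $L^\infty$-type constraint that cannot be read off directly from $W^{1,p}$-convergence by weak lower semicontinuity of the $W^{1,p}$-norm. The remedy is to descend along subsequences to $\Lcal^d$-almost-everywhere pointwise convergence in $\Omega$ for $\Lcal^1$-almost every $t \in [0,T]$, where the elementary fact that the pointwise limit of an equi-Lipschitz family on a dense subset is itself Lipschitz with the same constant resolves the difficulty.
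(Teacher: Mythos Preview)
Your proof is correct. The paper itself does not prove this lemma but simply cites \cite[Theorem 2.5]{Fornasier2014}, so there is no in-paper argument to compare against; your self-contained route via reflexivity of $L^2([0,T],W^{1,p}(\Omega,\R^d))$, convexity of $\U_{\Lpazo_U}$, strong closedness, and Mazur's theorem is a standard and valid way to obtain weak compactness, and your handling of the $W^{1,\infty}$-constraint in the limit (descending to pointwise convergence and using equi-Lipschitzness) is the natural remedy.

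One minor remark on presentation: when you write ``along a further subsequence, $\Lcal^d$-almost everywhere on $\Omega$'', this subsequence is $t$-dependent, which could raise a measurability concern. It is cleaner to note that for each fixed $t$ in the full-measure set, the equi-Lipschitz, uniformly bounded sequence $(u_n(t,\cdot))$ converges in $L^p(\Omega)$ and hence, by Arzel\`a--Ascoli and uniqueness of limits, uniformly on $\overline{\Omega}$ to an $\Lpazo_U$-Lipschitz, $U$-valued representative of $u(t,\cdot)$. This avoids any diagonal extraction and makes the $t$-by-$t$ conclusion transparent.
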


\begin{proof}
See e.g. \cite[Theorem 2.5]{Fornasier2014}.
\end{proof}

This allows to derive the following convergence result on the sequence of controls $(u_N^*(\cdot,\cdot))$ built in Step 2. 

\begin{cor}[Convergence of Lipschitz optimal control]
\label{cor:LimitControl}
There exists a map $u^*(\cdot,\cdot) \in L^2([0,T],W^{1,\infty}(\R^d,U))$ such that the sequence of Lipschitz optimal controls $(u_N^*(\cdot,\cdot))$ defined in Proposition \ref{prop:MFLip_Feedback} converges up to a subsequence towards $u^*(\cdot,\cdot)$ in the weak $L^2([0,T],W^{1,p}(\Omega,\R^d))$-topology for any $p \in (1,+\infty)$.  
\end{cor}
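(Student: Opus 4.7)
The plan is to apply Lemma \ref{lem:Compactness} to the sequence $(u_N^*(\cdot,\cdot))$, combined with a Cantor diagonal exhaustion of $\R^d$ by balls.

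First I would record the uniform bound obtained from Proposition \ref{prop:MFLip_Feedback}. Since $u_N^*(t,x) \in U$ and $U \subset B(0,R_U)$ for some $R_U > 0$ by \ref{hyp:H}-$(i)$, and since $x \mapsto u_N^*(t,x)$ is $\Lpazo_U$-Lipschitz for $\Lcal^1$-almost every $t \in [0,T]$, uniformly in $N \geq 1$, this yields
\begin{equation*}
\sup_{N \geq 1} \, \sup_{t \in [0,T]} \, \Norm{u_N^*(t,\cdot)}_{W^{1,\infty}(\Omega,\R^d)} \leq R_U + \Lpazo_U,
\end{equation*}
for every bounded set $\Omega \subset \R^d$. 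In particular, on any fixed ball $B(0,n) \subset \R^d$, the restrictions of the controls $(u_N^*(\cdot,\cdot))$ belong to the class $\U_{R_U+\Lpazo_U}$ introduced in Lemma \ref{lem:Compactness}.

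Next, I would apply Lemma \ref{lem:Compactness} on each ball $B(0,n)$ and combine the outputs by a standard Cantor diagonal argument. For each $n \geq 1$ and each $p \in (1,+\infty)$, the lemma provides a subsequence along which $(u_N^*(\cdot,\cdot))$ converges weakly in $L^2([0,T],W^{1,p}(B(0,n),\R^d))$. Moreover, because of the uniform $L^\infty$-bound available on both $u_N^*$ and $\nabla_x u_N^*$, weak-$^*$ $L^\infty$-convergence (obtained via Banach--Alaoglu on the separable predual $L^1$) automatically entails weak $L^p$-convergence for every $p \in [1,+\infty)$ on bounded domains, so the subsequence can be chosen independently of $p$. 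A single diagonal subsequence, still denoted $(u_N^*(\cdot,\cdot))$, then yields a map $u^*(\cdot,\cdot) : [0,T] \times \R^d \to \R^d$ such that $u_N^* \to u^*$ weakly in $L^2([0,T],W^{1,p}(B(0,n),\R^d))$ for every $n \geq 1$ and every $p \in (1,+\infty)$; consistency of the limits on nested balls follows from uniqueness of weak limits.

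Finally, I would verify the regularity claim $u^*(\cdot,\cdot) \in L^2([0,T],W^{1,\infty}(\R^d,U))$. The pointwise constraint $u^*(t,x) \in U$ almost everywhere follows from the closedness and convexity of $U$ via Mazur's Lemma applied to the weak convergence, while the uniform Lipschitz bound with constant $\Lpazo_U$ passes to the limit through the weak-$^*$ lower semicontinuity of the $L^\infty$-norm on each ball $B(0,n)$, or equivalently by passing to the limit in difference quotients. The only real subtlety here, namely producing a single limit defined globally on $\R^d$ while simultaneously preserving weak convergence for all admissible exponents $p \in (1,+\infty)$, is handled transparently by the diagonal extraction combined with the uniform $W^{1,\infty}$-bound, which renders the extraction essentially independent of $p$.
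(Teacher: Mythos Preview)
Your argument is correct, but it does more work than the paper's proof. The paper simply fixes the single bounded domain $\Omega := B(0,R_T)$, where $R_T>0$ is the radius provided by Proposition \ref{prop:MFPMP} such that all optimal trajectories $\xb_N^*(\cdot)$ live in $B(0,R_T)^N$, redefines $\Lpazo_U := \max\{L_U,\Lpazo_U\}$ to absorb the sup bound, and applies Lemma \ref{lem:Compactness} once. No diagonal extraction over an exhaustion of $\R^d$ is performed, because convergence on $B(0,R_T)$ is all that is ever used downstream (in Lemma \ref{lem:GeneralisedControl_Convergence} the measures $\mu_N^*(t)$ are supported in $\Omega = B(0,R_T)$).

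What your approach buys is a limit $u^*$ that is genuinely constructed on all of $\R^d$ with weak convergence on every bounded set, together with an explicit justification that the extracted subsequence is independent of $p$; the paper is terser on both points and implicitly relies on the fact that only the behaviour inside $B(0,R_T)$ matters. Your handling of the $p$-independence via weak-$^*$ $L^\infty$ compactness and of the constraint $u^*(t,x)\in U$ via Mazur's lemma is also more detailed than the paper, which leaves these as immediate consequences of Lemma \ref{lem:Compactness}.
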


\begin{proof}
This result comes from a direct application of Lemma \ref{lem:Compactness} to the sequence of optimal maps built in Proposition \ref{prop:MFLip_Feedback} up to choosing $\Omega := B(0,R_T)$ and redefining $\Lpazo_U := \max \{ L_U, \Lpazo_U \}$. 
\end{proof}

We now prove that the generalised optimal control $\Bnu^* \in \Ucal$ for problem $(\Ppazo_{\textnormal{meas}})$ is induced by the Lipschitz-in-space optimal control $u^*(\cdot,\cdot)$ defined in Corollary \ref{cor:LimitControl}. By construction, it holds for any $N \geq 1$ that
\begin{equation*}
\Bnu^*_N  :=  \INTDom{\left( \frac{1}{N} \sum_{i=1}^N u_i^*(t) \delta_{x_i^*(t)} \right)}{[0,T]}{\lambda(t)} =  \INTDom{\left( \frac{1}{N} \sum_{i=1}^N u_N^*(t,x_i^*(t)) \delta_{x_i^*(t)} \right)}{[0,T]}{\lambda(t)} = u^*_N(\cdot,\cdot) \tilde{\mu}^*_N, 
\end{equation*}
where $\Bnu^*_N \in \Ucal$ denotes the generalised empirical control measure introduced in Theorem \ref{thm:Fornasier2014}. In the following proposition, we prove that the sequence $(u_N^*(\cdot,\cdot) \tilde{\mu}_N^*)$ converges weakly-$^*$ towards $u^*(\cdot,\cdot)\tilde{\mu}^*$.
  
\begin{lem}[Convergence of generalised Lipschitz optimal controls]
\label{lem:GeneralisedControl_Convergence}
Let $R_T > 0$ be given by Proposition \ref{prop:MFPMP}, $\Omega := B(0,R_T)$ and $(\mu^*_N(\cdot)) \subset \Lip([0,T],\Pcal_1(\Omega))$ be the sequence of optimal measure curves associated with $(\Ppazo_N)$. Let $(u_N^*(\cdot,\cdot)) \subset L^2([0,T],W^{1,\infty}(\Omega,U))$ be as in Proposition \ref{prop:MFLip_Feedback} and $u^*(\cdot,\cdot)$ be one of its cluster points in the weak $L^2([0,T],W^{1,p}(\Omega,U))$-topology for some $p \in (d,+\infty)$. Then, $(\Bnu^*_N) := (u_N^*(\cdot,\cdot)\tilde{\mu}^*_N)$ converges towards $\Bnu^* = u^*(\cdot,\cdot) \tilde{\mu}^*$ in the weak-$^*$ topology of $\M([0,T] \times \Omega,U)$. 
\end{lem}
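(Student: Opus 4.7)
The goal is to show that for every $\Phi \in C^0([0,T] \times \bar{\Omega}, \R^d)$ one has $\langle \Bnu_N^* , \Phi\rangle \to \langle u^*(\cdot,\cdot)\tilde{\mu}^*, \Phi \rangle$. Since $\Omega = B(0,R_T)$ is bounded and the total variations of both $\Bnu_N^*$ and $u^*(\cdot,\cdot)\tilde{\mu}^*$ are uniformly dominated by $R_U := \sup_{u \in U}|u|$, a standard density argument reduces the task to the case $\Phi \in \Lip([0,T] \times \bar{\Omega}, \R^d)$. For such a test function, I would perform the decomposition
\[
\langle \Bnu_N^*, \Phi \rangle - \langle u^* \tilde{\mu}^*, \Phi \rangle \; = \; \underbrace{\INTDom{\langle \Phi, u_N^* \rangle}{[0,T] \times \Omega}{(\tilde{\mu}_N^* - \tilde{\mu}^*)}}_{(I)} \; + \; \underbrace{\INTDom{\langle \Phi, u_N^* - u^* \rangle}{[0,T] \times \Omega}{\tilde{\mu}^*}}_{(II)},
\]
and handle the two terms with two distinct sets of tools.

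For $(I)$, the crucial point is that the scalar map $x \mapsto \langle \Phi(t,x) , u_N^*(t,x) \rangle$ is Lipschitz in $x$ with a constant $C_\Phi := \NormC{\Phi}{0}{} \Lpazo_U + \Lip(\Phi) R_U$ which is uniform in $(N,t)$, as a consequence of Proposition \ref{prop:MFLip_Feedback} together with $U \subset B(0,R_U)$. Disintegrating $\tilde{\mu}^*_N$ and $\tilde{\mu}^*$ along $\lambda$ in the sense of Theorem \ref{thm:Disintegration} and applying the Kantorovich-Rubinstein duality \eqref{eq:Kantorovich_duality} slice-by-slice in $t$ yields
\[
|(I)| \; \leq \; C_\Phi \INTDom{W_1(\mu_N^*(t),\mu^*(t))}{[0,T]}{\lambda(t)} \; \leq \; C_\Phi \sup_{t \in [0,T]} W_1(\mu_N^*(t),\mu^*(t)),
\]
which tends to $0$ as $N \to +\infty$ by the uniform-in-$t$ convergence provided by Theorem \ref{thm:Fornasier2014}.

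For $(II)$, the critical ingredient is the hypothesis $p > d$, which makes the Sobolev embedding $W^{1,p}(\Omega) \hookrightarrow C^0(\bar{\Omega})$ continuous. This allows the linear form
\[
\Lpazo : v \in L^2([0,T], W^{1,p}(\Omega,\R^d)) \longmapsto \INTDom{\langle \Phi(t,x), v(t,x) \rangle}{[0,T] \times \Omega}{\tilde{\mu}^*(t,x)} \in \R
\]
to be continuous, as $|\Lpazo(v)| \leq \NormC{\Phi}{0}{} \INTDom{\NormC{v(t,\cdot)}{0}{\bar{\Omega}}}{[0,T]}{\lambda(t)} \leq C \, \Norm{v}_{L^2([0,T], W^{1,p}(\Omega,\R^d))}$ for a constant $C$ depending only on $\Phi$, $p$, $\Omega$ and $T$. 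Since $u^*_N \rightharpoonup u^*$ in the weak $L^2([0,T], W^{1,p}(\Omega,U))$-topology by Corollary \ref{cor:LimitControl} for the chosen exponent $p$, we immediately obtain $(II) = \Lpazo(u^*_N - u^*) \to 0$.

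The main obstacle I anticipate is reconciling two heterogeneous modes of convergence living in intrinsically different spaces, namely the weak-$^*$ convergence of the Radon measures $(\tilde{\mu}_N^*)$ and the weak Sobolev convergence of the vector fields $(u_N^*)$. The uniform-in-$N$ spatial Lipschitz bound from Proposition \ref{prop:MFLip_Feedback} is precisely what renders $(I)$ tractable via optimal transport, while the restriction $p > d$ is exactly what permits integrating weakly convergent $W^{1,p}$-maps against the genuinely measure-valued object $\tilde{\mu}^*$ in $(II)$.
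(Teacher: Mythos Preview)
Your proposal is correct and follows essentially the same route as the paper: the same two-term splitting, with the measure-difference term handled by the uniform Lipschitz bound on $u_N^*$ combined with Kantorovich--Rubinstein duality, and the control-difference term handled via the Morrey embedding $W^{1,p}(\Omega)\hookrightarrow C^0(\bar\Omega)$ for $p>d$ so that testing against $\tilde\mu^*$ defines a continuous linear form on $L^2([0,T],W^{1,p}(\Omega,\R^d))$. The only cosmetic differences are the ordering of the two terms and the density class of test functions (you use Lipschitz functions, the paper uses $C^1_c$).
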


\begin{proof}
Recall first that the topological dual of the Banach space $L^2([0,T],W^{1,p}(\Omega,U))$ can be identified with $L^2([0,T],W^{-1,p'}(\Omega,U))$, where $p'$ is the conjugate exponent of $p$. Hence, the fact that $u_N(\cdot,\cdot) \rightharpoonup u(\cdot,\cdot)$ in $L^2([0,T],W^{1,p}(\Omega,U))$ as $N \rightarrow +\infty$ can be reformulated as
\begin{equation}
\label{eq:WeakConvergence_Controls1}
\INTSeg{\langle \xi(t) , u_N^*(t,\cdot) \rangle_{W^{1,p}(\Omega,U)}}{t}{0}{T} ~\underset{\mathsmaller{N \rightarrow +\infty}}{\longrightarrow}~ \INTSeg{\langle \xi(t) , u^*(t,\cdot) \rangle_{W^{1,p}(\Omega,U)}}{t}{0}{T},
\end{equation}
for any $\xi \in L^{2}([0,T],W^{-1,p'}(\Omega,\R^d))$, where $\langle \cdot,\cdot \rangle_{W^{1,p}(\Omega,U)}$ denotes the duality bracket of $W^{1,p}(\Omega,U)$.

Since we assumed that $p \in (d,+\infty)$, it holds by Morrey's Embedding (see e.g. \cite[Theorem 9.12]{Brezis}) that $W^{1,p}(\Omega,U) \subset C^0(\Omega,U)$. By taking the topological dual of this inclusion, we obtain that $\M(\Omega,U) \subset W^{-1,p'}(\Omega,U)$. This relation, combined with \eqref{eq:DualityBracket_Measures} and \eqref{eq:WeakConvergence_Controls1}, yields 
\begin{equation}
\label{eq:WeakConvergence_Controls2}
\INTSeg{\INTDom{\langle \xi(t,x) , u_N^*(t,x) \rangle}{\R^d}{\sigma(t)(x)}}{t}{0}{T} ~\underset{N \rightarrow +\infty}{\longrightarrow}~ \INTSeg{\INTDom{\langle \xi(t,x) , u^*(t,x) \rangle}{\R^d}{\sigma(t)(x)}}{t}{0}{T},
\end{equation}
for any curve $\sigma(\cdot) \in C^0([0,T],\M(\Omega,\R_+))$ and any $\xi \in C^1_c([0,T] \times \Omega,\R^d)$. Moreover, for each $N \geq 1$ it holds
\begin{equation}
\label{eq:SobolevEstimation}
\begin{aligned}
& \left| \INTSeg{\INTDom{\langle \xi(t,x) , u^*(t,x) \rangle}{\R^d}{\mu^*(t)(x)}}{t}{0}{T}  - \INTSeg{\INTDom{\langle \xi(t,x) , u_N^*(t,x) \rangle}{\R^d}{\mu_N^*(t)(x)}}{t}{0}{T}  \right| \\
\leq ~ & \left| \INTSeg{\INTDom{\langle \xi(t,x) , u^*(t,x) - u_N^*(t,x) \rangle}{\R^d}{\mu^*(t)(x)}}{t}{0}{T} \right| + \left| \INTSeg{\INTDom{\langle \xi(t,x) , u_N^*(t,x) \rangle}{\R^d}{(\mu^*(t)-\mu_N^*(t))(x)}}{t}{0}{T} \right|.
\end{aligned}
\end{equation} 
The first term in the right-hand side of \eqref{eq:SobolevEstimation} vanishes as $N \rightarrow +\infty$ as a consequence of \eqref{eq:WeakConvergence_Controls2}. By invoking Kantorovich-Rubinstein duality formula \eqref{eq:Kantorovich_duality} along with the $\Lpazo_U$-Lipschitz regularity of the maps $x \in \R^d \mapsto u_N^*(t,x) \in U$, we obtain the following upper bound on the second term in the right-hand side of \eqref{eq:SobolevEstimation}
\begin{equation*}
\left| \INTSeg{\INTDom{\langle \xi(t,x) , u_N^*(t,x) \rangle}{\R^d}{(\mu^*(t)-\mu_N^*(t))(x)}}{t}{0}{T} \right| \leq C_{\xi} \hspace{-0.05cm} \max_{t \in [0,T]} \hspace{-0.05cm} W_1(\mu_N^*(t),\mu^*(t)) ~\underset{N \rightarrow +\infty}{\longrightarrow}~ 0, 
\end{equation*}
where $C_{\xi} := \Lpazo_U \max_{t \in [0,T]} \big( \NormC{\xi(t,\cdot)}{0}{\Omega} +  \Lip(\xi(t,\cdot);\Omega) \big)$. Therefore, we recover the convergence result
\begin{equation}
\label{eq:ConvXi}
\INTSeg{\INTDom{\langle \xi(t,x) , u_N^*(t,x) \rangle}{\R^d}{\mu_N^*(t)(x)}}{t}{0}{T} ~\underset{N \rightarrow +\infty}{\longrightarrow}~ \INTSeg{\INTDom{\langle \xi(t,x) , u^*(t,x) \rangle}{\R^d}{\mu^*(t)(x)}}{t}{0}{T}, 
\end{equation}
for any $\xi \in C^1_c([0,T] \times \R^d,\R^d)$. Since the measure curves $\mu^*_N(\cdot)$ are uniformly compactly supported in $\Omega \subset \R^d$, one can show that \eqref{eq:ConvXi} holds for any $\xi \in C^0_c([0,T] \times \R^d,\R^d)$ by a classical approximation argument (see e.g. \cite{Fornasier2019}). This precisely amounts to saying that $\Bnu^*_N \rightharpoonup^* u^*(\cdot,\cdot) \tilde{\mu}^*$ as $N \rightarrow +\infty$ along the same subsequence.
\end{proof}

By uniqueness of the weak-$^*$ limit in $\M([0,T] \times \R^d,U)$, we obtain by combining Lemma \ref{lem:GeneralisedControl_Convergence} with Theorem \ref{thm:Fornasier2014} that the optimal solution $\Bnu^* \in \Ucal$ of $(\Ppazo_{\textnormal{meas}})$ is induced by $u^*(\cdot,\cdot)$. Whence the pair $(\mu^*(\cdot),u^*(\cdot,\cdot))$ is a classical optimal pair for $(\Ppazo)$, which concludes the proof of Theorem \ref{thm:MainResult2}. 


\subsection{A sufficient condition for coercivity and proof of Theorem \ref{thm:MainResult1}}
\label{subsection:MainResult1}

In this section, we prove a simple and general sufficient condition for the coercivity estimate \ref{hyp:CO_N} to hold, and use it to deduce Theorem \ref{thm:MainResult1}  from Theorem \ref{thm:MainResult2}.

\begin{prop}[A sufficient condition for mean-field coercivity]
\label{prop:SufficientCoercivity}
Let $\mu^0 \in \Pcal_c(\R^d)$ and suppose that hypotheses \ref{hyp:H} hold. Then, there exists a constant $\lambda_{(\Ppazo)} \geq 0$ such that, if the control cost $\psi : U \rightarrow \R$ is strongly convex with constant $\lambda_{\psi} > \lambda_{(\Ppazo)}$, then the coercivity \ref{hyp:CO_N} holds along any optimal mean-field Pontryagin triple $(\xb^*_N(\cdot),\rb^*_N(\cdot),\ub^*_N(\cdot))$ with $\rho_T := \lambda_{\psi} - \lambda_{(\Ppazo)}$. Moreover, the constant $\lambda_{(\Ppazo)}$ is intrinsic to $(\Ppazo)$, in the sense that it only depends on the $\Cpazo^{2}_{\loc}$-Wasserstein norms of the dynamics and cost functionals. 
\end{prop}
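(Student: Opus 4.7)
The plan is to decompose the left-hand side of \ref{hyp:CO_N} into its three constituent summands and to estimate each separately. The crucial observation is that in the mean-field Hamiltonian \eqref{eq:Hamiltonien_MF}, the control $\ub$ enters only through the linear map $\ub \mapsto \tfrac{1}{N}\sum_i \langle r_i , u_i\rangle$ and the strictly convex term $-\tfrac{1}{N}\sum_i \psi(u_i)$. Using the matrix representation of Remark \ref{rmk:MF_Hessian}, one computes directly $\BHess{\ub} \H_N[t,\xb_N^*(t),\rb_N^*(t),\ub_N^*(t)](\wb,\wb) = -\tfrac{1}{N}\sum_{i=1}^N \langle \nabla^2 \psi(u_i^*(t)) w_i , w_i\rangle$, and the strong convexity assumption $\nabla^2 \psi \geq \lambda_{\psi} \I_d$ yields at once the positive contribution
\begin{equation*}
-\INTSeg{\BHess{\ub} \H_N[t,\xb^*_N(t),\rb^*_N(t),\ub^*_N(t)](\wb(t),\wb(t))}{t}{0}{T} \; \geq \; \lambda_{\psi} \INTSeg{|\wb(t)|_N^2}{t}{0}{T}.
\end{equation*}

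The second step is to dominate the two state-dependent terms by a multiple of $\INTSeg{|\wb(t)|_N^2}{t}{0}{T}$. I would rewrite the linearised system as $\dot{\yb}(t) = \Ab_N(t) \yb(t) + \wb(t)$ with $\yb(0) = \boldsymbol{0}$, where $\Ab_N(t)$ gathers the diagonal Jacobian blocks $\D_x \vb_N(t,\xb^*_N(t),x_i^*(t))$ and the non-local blocks $\Db_{x_j} \vb_N(t,\xb^*_N(t),x_i^*(t))$. By Proposition \ref{prop:MF_Derivatives} combined with Corollary \ref{cor:UniformLip}, the operator norm of $\Ab_N(t)$ with respect to $|\cdot|_N$ is bounded by a constant $L_v > 0$ depending only on the $\Cpazo^{2,1}$-Wasserstein norm of $v(t,\cdot,\cdot)$ over $\Pcal(B(0,R_T)) \times B(0,R_T)$, where $R_T$ is the uniform-in-$N$ support radius furnished by Proposition \ref{prop:MFPMP}. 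A standard energy estimate --- testing the system against $\yb(t)$ in the rescaled inner product, using Young's inequality, and applying Gr\"onwall's Lemma --- then produces a constant $C_T > 0$ depending only on $L_v$ and $T$ for which
\begin{equation*}
\max_{t \in [0,T]} |\yb(t)|_N^2 \; \leq \; C_T \INTSeg{|\wb(s)|_N^2}{s}{0}{T}.
\end{equation*}

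To conclude, I would combine the two previous steps with the uniform upper bounds $|\BHess{\xb} \Bvarphi_N[\xb^*_N(T)](\yb(T),\yb(T))| \leq C_{\varphi} |\yb(T)|_N^2$ and $|\BHess{\xb} \H_N[t,\xb^*_N(t),\rb^*_N(t),\ub^*_N(t)](\yb(t),\yb(t))| \leq C_{\H} |\yb(t)|_N^2$, both of which follow from Proposition \ref{prop:MF_Derivatives} applied to $\varphi$ and to the $\xb$-dependent part of $\H_N$. Putting everything together, one finds
\begin{equation*}
\textnormal{LHS of \ref{hyp:CO_N}} \; \geq \; -C_{\varphi} |\yb(T)|_N^2 - C_{\H} \INTSeg{|\yb(t)|_N^2}{t}{0}{T} + \lambda_{\psi} \INTSeg{|\wb(t)|_N^2}{t}{0}{T} \; \geq \; \big( \lambda_{\psi} - (C_{\varphi} + T C_{\H}) C_T \big) \INTSeg{|\wb(t)|_N^2}{t}{0}{T},
\end{equation*}
which is the claim with $\lambda_{(\Ppazo)} := (C_{\varphi} + T C_{\H}) C_T$ and $\rho_T := \lambda_{\psi} - \lambda_{(\Ppazo)}$. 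The main point requiring care throughout is the $N$-independence of the three structural constants $C_T,C_{\H},C_{\varphi}$: this is precisely where the regularity bound \eqref{eq:C2_Correspondance} of Proposition \ref{prop:MF_Derivatives}, which converts the $\Cpazo^{2,1}_{\loc}$-Wasserstein norms of the intrinsic data $v,L,\varphi$ into uniform bounds on the associated discrete mean-field Hessians, plays the decisive role, and it is also what ensures that $\lambda_{(\Ppazo)}$ depends only on intrinsic quantities of $(\Ppazo)$.
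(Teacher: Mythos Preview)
Your proposal is correct and follows essentially the same three-step strategy as the paper: exploit the strong convexity of $\psi$ for the $\ub$-Hessian term, control $|\yb(t)|_N^2$ in terms of $\int_0^T |\wb|_N^2$ via a Gr\"onwall argument on the linearised system, and bound the $\xb$-Hessian terms using the uniform $C^{2,1}_N$-regularity inherited from \eqref{eq:C2_Correspondance}. The only cosmetic difference is that the paper packages the lower bound on the state-Hessian terms into a separate lemma (Lemma \ref{lem:LambdaConvexity}), which derives it through a $\lambda$-convexity argument on $K^N$, whereas you invoke the two-sided Hessian norm bound from Proposition \ref{prop:MF_Derivatives} directly; both routes yield the same constant and the same $N$-independence.
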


The main ingredient involved in this result is contained in the following technical lemma, which proof is provided for the sake of completeness. 

\begin{lem}[$C^2_N$-functions are $\lambda$-convex on products of convex compact sets]
\label{lem:LambdaConvexity}
Let $K \subset \R^d$ be a convex compact set and $\phi : \Pcal_c(\R^d) \rightarrow \R$ be $\Cpazo^{2,1}_{\loc}$-Wasserstein regular with discrete approximating sequence $(\Bphi_N(\cdot))$. Then
\begin{equation*}
\begin{aligned}
\BHess{} \Bphi_N[\xb](\hb,\hb) & \geq - \max_{\mu \in \Pcal(K)} \Big( \NormC{\D_x \nabla_{\mu} \phi(\mu)(\cdot)}{0}{K,\R^{d \times d}} + \NormC{\D^2_{\mu} \phi(\mu)(\cdot,\cdot)}{0}{K \times K,\R^{d \times d}}  \Big) |\hb|_N^2,
\end{aligned}
\end{equation*} 
for any $\xb,\hb \in K^N$. 
\end{lem}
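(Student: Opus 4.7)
The plan is to start directly from the explicit expression \eqref{eq:MF_Hessian} of the mean-field Hessian bilinear form. Setting $\hb^1 = \hb^2 = \hb$, this gives
\begin{equation*}
\BHess{} \Bphi_N[\xb](\hb,\hb) = \tfrac{1}{N}\sum_{i=1}^N \langle \D_x \nabla_{\mu} \phi(\mu[\xb])(x_i) h_i, h_i\rangle + \tfrac{1}{N^2}\sum_{i,j=1}^N \langle \D^2_{\mu} \phi(\mu[\xb])(x_i,x_j) h_i, h_j \rangle,
\end{equation*}
and I would estimate each of the two sums separately. Since $\xb \in K^N$ implies $\mu[\xb] \in \Pcal(K)$, the supremum norms appearing in the right-hand side of the claimed bound provide uniform pointwise upper bounds on the operator norms of the matrices $\D_x \nabla_{\mu}\phi(\mu[\xb])(x_i)$ and $\D^2_{\mu}\phi(\mu[\xb])(x_i,x_j)$.

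For the first (diagonal) term, the lower bound $\langle \D_x \nabla_{\mu} \phi(\mu[\xb])(x_i) h_i, h_i\rangle \geq -\NormC{\D_x \nabla_{\mu} \phi(\mu)(\cdot)}{0}{K,\R^{d \times d}} |h_i|^2$ holds for every index, and summing and dividing by $N$ produces exactly $-\NormC{\D_x \nabla_{\mu} \phi(\mu)(\cdot)}{0}{K,\R^{d \times d}} |\hb|_N^2$ after recognising the definition of $|\hb|_N^2 = \tfrac{1}{N}\sum_i |h_i|^2$.

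The delicate point, which is the main (mild) obstacle, is to control the double sum by $|\hb|_N^2$ and not by something scaling like $N|\hb|_N^2$. To do so, I would apply Cauchy-Schwarz at the level of each individual matrix-vector pairing to obtain $|\langle \D^2_{\mu} \phi(\mu[\xb])(x_i,x_j) h_i, h_j\rangle| \leq \NormC{\D^2_{\mu} \phi(\mu)(\cdot,\cdot)}{0}{K \times K,\R^{d \times d}} |h_i||h_j|$, and then observe that the prefactor $\tfrac{1}{N^2}$ allows me to recognise $\tfrac{1}{N^2}\sum_{i,j}|h_i||h_j| = \bigl(\tfrac{1}{N}\sum_i|h_i|\bigr)^2$. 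A second application of Cauchy-Schwarz on $\R^N$ then yields $\tfrac{1}{N}\sum_i|h_i| \leq \bigl(\tfrac{1}{N}\sum_i|h_i|^2\bigr)^{1/2} = |\hb|_N$, so the double sum is bounded below by $-\NormC{\D^2_{\mu} \phi(\mu)(\cdot,\cdot)}{0}{K \times K,\R^{d \times d}} |\hb|_N^2$.

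Adding the two lower bounds gives the estimate in the statement. Note that the convexity of $K$ is not strictly needed for this inequality itself, but it ensures that $\mu[\xb] \in \Pcal(K)$ whenever $\xb \in K^N$, making the right-hand side genuinely controlled by intrinsic quantities on $\Pcal(K)$. This lemma is precisely the semi-convexity estimate for mean-field approximating maps that will subsequently allow, when $\psi(\cdot)$ is strongly convex with a sufficiently large constant, the $\ub$-variable contribution in \ref{hyp:CO_N} to dominate the sign-indefinite $\xb$-variable contributions arising from $\Bvarphi_N$ and $\H_N$ along admissible trajectories.
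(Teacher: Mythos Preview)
Your proof is correct and takes a genuinely different, more direct route than the paper. The paper first establishes $\lambda$-convexity of $\Bphi_N$ on $K^N$ via an integral Taylor expansion of the mean-field gradient along segments (which is where convexity of $K$ is actually used), then derives the Hessian lower bound from this $\lambda$-convexity by a second-order expansion, and finally bounds the semiconvexity constant $\Lip(\BGrad{}\Bphi_N(\cdot);K^N)$ by the operator norm of the Hessian --- which is essentially the same computation you perform in your last step. Your argument bypasses the detour through $\lambda$-convexity entirely by working directly on the explicit formula \eqref{eq:MF_Hessian} and using two Cauchy--Schwarz inequalities to control the off-diagonal double sum; this is shorter and, as you correctly observe, does not require convexity of $K$ for the inequality itself. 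The paper's route has the minor conceptual advantage of actually proving the $\lambda$-convexity announced in the lemma's title, but for the stated Hessian bound your approach is cleaner. One small inaccuracy in your closing remark: the implication $\xb \in K^N \Rightarrow \mu[\xb] \in \Pcal(K)$ holds for any $K$, convex or not, so convexity plays no role even there in your argument.
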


\begin{proof}
Let $\xb,\yb \in K^N$ and $t \in [0,1]$. As a consequence of Proposition \ref{prop:MF_Derivatives}, one can write the following integral Taylor formulas for $\Bphi_N(\cdot)$
\begin{equation}
\label{eq:Lambda_Convexity1}
\left\{
\begin{aligned}
\Bphi_N((1-t)\xb + t \yb) & = \Bphi_N(\xb) + \INTSeg{\langle \BGrad{} \Bphi_N(\xb + st(\yb-\xb)) , t(\yb - \xb) \rangle_N}{s}{0}{1}, \\
\Bphi_N(\yb) & = \Bphi_N(\xb) + \INTSeg{\langle \BGrad{} \Bphi_N(\xb + s(\yb-\xb)) , \yb - \xb \rangle_N}{s}{0}{1}. 
\end{aligned}
\right.
\end{equation}
Combining the two equations of \eqref{eq:Lambda_Convexity1}, it then holds
\begin{equation}
\label{eq:LambdaConv_Exp}
\begin{aligned}
\Bphi_N((1-t)\xb + t \yb) - (1-t)\Bphi_N(\xb) - t \Bphi_N(\yb) & = \INTSeg{\big\langle \BGrad{} \Bphi_N(\xb + st(\yb-\xb)) , t(\yb-\xb) \big\rangle_N}{s}{0}{1} \\
& \hspace{0.4cm} - \INTSeg{\big\langle \BGrad{} \Bphi_N(\xb + s(\yb-\xb)) , t(\yb-\xb) \big\rangle_N}{s}{0}{1} \\
& \leq \frac{t(1-t)}{2} \Lip \big( \BGrad{} \Bphi_N(\cdot);K^N \big) |\yb - \xb|_N^2,
\end{aligned}
\end{equation}
where we used the fact that both $(\xb + st(\yb-\xb))$ and $(\xb + s(\yb-\xb))$ belong to $K^N$, since this set is convex. Therefore, we have shown that the map $\Bphi_N(\cdot)$ is $\lambda$-convex over $K^N$ with $\lambda = -\Lip(\BGrad{}\Bphi_N(\cdot);K^N)$. 

Choosing in particular $\yb = \xb + s \hb$ with $s \in (0,1)$ small, the $\lambda$-convexity \eqref{eq:LambdaConv_Exp} of $\Bphi_N(\cdot)$ can be expressed as 
\begin{equation}
\label{eq:Lambda_Convexity2}
\Bphi_N(\xb + st \hb) \leq t \big( \Bphi_N(\xb + s\hb) - \Bphi_N(\xb) \big) + s^2 \frac{t(1-t)}{2} \Lip \big( \BGrad{} \Bphi_N(\cdot);K^N \big) |\hb|_N^2. 
\end{equation}
By applying the chain rule \eqref{eq:MF_Taylor} to \eqref{eq:Lambda_Convexity2}, we obtain
\begin{equation*}
(st)^2 \BHess{} \Bphi_N[\xb](\hb,\hb) \leq s^2 t \, \BHess{} \Bphi_N[\xb](\hb,\hb) + s^2 t(1-t)\Lip \big( \BGrad{} \Bphi_N(\cdot);K^N \big) |\hb|_N^2 + o((st)^2) + o(s^2t),  
\end{equation*}
so that dividing by $s^2t > 0$ and letting $s,t \rightarrow 0^+$, we finally recover that
\begin{equation*}
\BHess{} \Bphi_N[\xb](\hb,\hb) \geq -\Lip \big( \BGrad{} \Bphi_N(\cdot);K^N \big) |\hb|_N^2, 
\end{equation*}
for any $\xb,\hb \in K^N$. One can finally check that, as a consequence of \eqref{eq:MF_Gradient}, it holds
\begin{equation*}
\begin{aligned}
|\Lip( \BGrad{} \Bphi_N(\cdot) \, ; K^N)| & \leq \max_{\xb \in K^N} \max_{|\hb|_N = 1} |\BHess{} \Bphi_N[\xb](\hb,\hb)| \\
& \leq \max_{\mu \in \Pcal(K)} \Big( \NormC{\D_x \nabla_{\mu} \phi(\mu)(\cdot)}{0}{K,\R^{d \times d}} + \NormC{\D^2_{\mu} \phi(\mu)(\cdot,\cdot)}{0}{K \times K,\R^{d \times d}}  \Big),
\end{aligned}
\end{equation*}
which concludes the proof of our claim, since $\mu[\xb] \in \Pcal(K)$ for any $\xb \in K^N$. 
\end{proof}

\begin{proof}[Proof of Proposition \ref{prop:SufficientCoercivity}]
As a consequence of hypotheses \ref{hyp:H} together with Proposition \ref{prop:MF_Derivatives}, the partial Hamiltonian $(\xb,\ub) \in B(0,R_T)^N \times U^N \mapsto \H_N(t,\xb,\rb,\ub)$ and the final cost $\xb \in (\R^d)^N \mapsto \Bvarphi_N(\xb)$ are $C^{2,1}_N$-regular uniformly with respect to $(t,\rb) \in [0,T] \times B(0,R_T)^N$, with constants $\Lpazo_{\K} > 0$ that only depend on the $\Cpazo^{2,1}_{loc}$-Wasserstein norms of the dynamics and cost functionals, where $R_T > 0$ is given by Proposition \ref{prop:MFPMP}. 

By repeating the Gr\"onwall estimates made on the costate variables in the proof of Proposition \ref{prop:MFPMP}, one can check that the solutions $\yb(\cdot)$ of the mean-field linearised system described in \ref{hyp:CO_N} are contained in a product of compact sets $K^N \subset (\R^d)^N$. Moreover, they also satisfy the estimate 
\begin{equation*}
\max \bigg\{ |\yb(T)|_N^2 ~,~ \INTSeg{|\yb(t)|_N^2}{t}{0}{T} \bigg\} \leq C_T \INTSeg{|\wb(t)|_N^2}{t}{0}{T}, 
\end{equation*}
for a given uniform constant $C_T > 0$. Merging these facts together along with the statement of Lemma \ref{lem:LambdaConvexity}, there exists an intrinsic constant $\lambda_{(\Ppazo)} \geq 0$ independent of $N \geq 1$ such that
\begin{equation}
\label{eq:LambdaConv1}
\begin{aligned}
\BHess{\xb} \Bvarphi_N[\xb_N^*(T)](\yb(T),\yb(T))- \INTSeg{ \BHess{\xb} \H_N[t,\xb_N^*(t),\rb_N^*(t),\ub_N^*(t)](\yb(t),\yb(t))}{t}{0}{T} \geq - \lambda_{(\Ppazo)} \INTSeg{|\wb(t)|^2_N}{t}{0}{T},
\end{aligned}
\end{equation}
along any linearising pair $(\yb(\cdot),\wb(\cdot))$. Observe now that if $\psi(\cdot)$ is $\lambda_{\psi}$-strongly convex, it also holds 
\begin{equation}
\label{eq:LambdaConv2}
\begin{aligned}
- \INTSeg{\BHess{\ub} \H_N[t,\xb_N^*(t),\rb_N^*(t),\ub_N^*(t)](\wb(t),\wb(t))}{t}{0}{T} & = \INTSeg{\bigg( \frac{1}{N} \sum_{i=1}^N  \langle \nabla^2 \psi(u_i^*(t)) w_i(t),w_i(t) \rangle \bigg)}{t}{0}{T} \\
& \geq \lambda_{\psi} \INTSeg{|\wb(t)|_N^2}{t}{0}{T}, 
\end{aligned}
\end{equation}
for any map $\wb(\cdot) \in L^2([0,T],U^N)$. Combining \eqref{eq:LambdaConv1} and \eqref{eq:LambdaConv2}, we obtain the uniform coercivity-type estimate
\begin{equation*}
\begin{aligned}
\BHess{\xb} \Bvarphi_N[\xb_N^*(T)](\yb(T),\yb(T)) - \INTSeg{ \BHess{\xb} \H_N[t,\xb_N^*(t),\rb_N^*(t),\ub_N^*(t)](\yb(t),\yb(t))}{t}{0}{T} & \\
- \INTSeg{\BHess{\ub} \H_N[t,\xb_N^*(t),\rb_N^*(t),\ub_N^*(t)](\wb(t),\wb(t))}{t}{0}{T} \geq & (\lambda_{\psi} - \lambda_{(\Ppazo)}) \INTSeg{|\wb(t)|_N^2}{t}{0}{T}.
\end{aligned}
\end{equation*}
Therefore, up to choosing a control cost with strong convexity constant $\lambda_{\psi} > \lambda_{(\Ppazo)}$, the coercivity estimate \ref{hyp:CO_N} holds along any optimal mean-field Pontryagin triple with $\rho_T := \lambda_{\psi} - \lambda_{(\Ppazo)}$.
\end{proof}

We can now use this intermediate result together with Theorem \ref{thm:MainResult2} to prove Theorem \ref{thm:MainResult1}. 

\begin{proof}[Proof of Theorem \ref{thm:MainResult1}]
By Theorem \ref{thm:Fornasier2014} and under hypotheses \ref{hyp:H}, one can associate to any sequence of uniformly compactly supported measures $(\mu^0_N) \subset \Pcal_c(\R^d)$ such that $W_1(\mu^0_N,\mu^0) \rightarrow 0$ as $N \rightarrow +\infty$ a sequence of generalised trajectory-control pairs $(\mu^*_N(\cdot),\Bnu^*_N) \in \Lip([0,T],\Pcal_1(\R^d)) \times \Ucal$ which converges to an optimal pair for $(\Ppazo)$. 

Moreover, we assumed that $\psi(\cdot)$ is strongly convex with $\lambda_{\psi} > \lambda_{(\Ppazo)}$, where $\lambda_{(\Ppazo)} \geq 0$ is the intrinsic constant introduced in Proposition \ref{prop:SufficientCoercivity}. Thus, the coercivity estimate \ref{hyp:CO_N} holds along any optimal mean-field Pontryagin triple for $(\Ppazo_N)$. Thus, by Theorem \ref{thm:MainResult2} there exist a constant $\Lpazo_U > 0$ together with a mean-field optimal control $u^*(\cdot,\cdot)$ for $(\Ppazo)$ such that $x \in \R^d \mapsto u^*(t,x) \in U$ is $\Lpazo_U$-Lipschitz for $\Lcal^1$-almost every $t \in [0,T]$. 
\end{proof}


\section{Sharpness of the coercivity estimate $\ref{hyp:CO_N}$}
\label{section:Discussion}

In this section, we develop an example in which the mean-field coercivity condition \ref{hyp:CO_N} is both necessary and sufficient for the Lipschitz-in-space regularity of optimal controls. With this goal in mind, we consider the mean-field optimal control problem
\begin{equation*}
(\Ppazo^V) ~~ \left\{
\begin{aligned}
\min_{u \in \U} & \left[ \frac{\lambda}{2} \INTSeg{\INTDom{|u(t,x)|^2}{\R}{\mu(t)(x)}}{t}{0}{T} - \frac{1}{2} \INTDom{\left| x - \bar{\mu}(T) \right|^2}{\R}{\mu(T)(x)} \right] \\
\text{s.t.} & \left\{
\begin{aligned}
& \partial_t \mu(t) + \nabla \cdot (u(t,\cdot) \mu(t)) = 0, \\
& \mu(0) = \tfrac{1}{2} \mathds{1}_{[-1,1]} \Lcal^1.
\end{aligned}
\right.
\end{aligned}
\right.
\end{equation*}
In $(\Ppazo^V)$, one aims at maximising the variance at time $T > 0$ of a measure curve $\mu(\cdot)$ starting from the normalised indicator function of $[-1,1]$, while penalising the running $L^2(\mu(t))$-norm of the control . Here, the set of admissible control values is $U = [-C,C]$ for a positive constant $C > 0$, and the parameter $\lambda > 0$ is the relative weight between the final cost and the control penalisation. It can be verified straightforwardly that this problem satisfies hypotheses \ref{hyp:H} of Section \ref{section:Fornasier2014}. 

Given a sequence of empirical measures $(\mu^0_N) := (\mu[\xb_N]) \subset \Pcal_N([-1,1])$ converging in the $W_1$-metric towards $\mu^0$, we can define the family $(\Ppazo^V_N)$ of discretised multi-agent problems associated to $(\Ppazo^V)$ as
\begin{equation*}
(\Ppazo^V_N) ~~ \left\{
\begin{aligned}
\min_{\ub(\cdot) \in \U_N} & \left[ \frac{\lambda}{2N} \sum_{i=1}^N \INTSeg{\hspace{-0.15cm} u_i^2(t)}{t}{0}{T} - \frac{1}{2N} \sum_{i=1}^N |x_i(T)-\bar{\xb}(T)|^2 \right] \\
\text{s.t.} & \left\{
\begin{aligned}
\dot x_i(t) & = u_i(t), \\
 x_i(0) & = x_i^0. 
\end{aligned}
\right.
\end{aligned}
\right.
\end{equation*}
where $\bar{\xb}(\cdot) = \tfrac{1}{N} \sum_{i=1}^N x_i(\cdot)$ and $\U_N = L^{\infty}([0,T],U^N)$. As a consequence of Proposition \ref{prop:Existence_FiniteDim}, there exists for any $N \geq 1$ an optimal trajectory-control pair $(\xb^*_N(\cdot),\ub^*_N(\cdot)) \in \Lip([0,T],(\R^d)^N) \times \U_N$ solution of $(\Ppazo^V_N)$. The mean-field Hamiltonian associated with $(\Ppazo^V_N)$ is given by 
\begin{equation}
\label{eq:ExplicitExp_Hamiltonian}
\H_N : (t,\xb,\rb,\ub) \in [0,T] \times (\R^2)^N \times [-C,C]^N \mapsto \frac{1}{N}\sum_{i=1}^N \Big( \langle r_i , u_i\rangle - \tfrac{\lambda}{2}|u_i|^2 \Big). 
\end{equation}
By the mean-field PMP of Proposition \ref{prop:MFPMP}, there exists a covector $\rb^*_N(\cdot) \in \Lip([0,T],\R^N)$ such that 
\begin{equation*}
\left\{
\begin{aligned}
\dot r_i^*(t) & = - \BGrad{x_i} \H_N(t,\xb^*_N(t),\rb^*_N(t),\ub^*_N(t)) = 0, \\
r_i^*(T) & = \hspace{0.27cm} \BGrad{x_i} \Var_N(\xb^*_N(T)) = x_i^*(T) - \bar{\xb}^*(T), \\
u_i^*(t) & \in \underset{v \in U}{\textnormal{argmax}} \, [ r_i^*(t) v - \tfrac{\lambda}{2}v^2]. 
\end{aligned}
\right.
\end{equation*}
Therefore, the optimal covector $\rb_N^*(\cdot)$ is constant and uniquely determined via
\begin{equation*}
r_i^*(t) = x_i^*(T) - \bar{\xb}^*(T),
\end{equation*}
for any $i \in \{1,\dots,N\}$. As a consequence of the maximisation condition one can express the components of the optimal control $\ub^*_N(\cdot)$ explicitly as
\begin{equation}
\label{eq:ExplicitExp_OptimalControl}
u_i^*(t) = \pi_U \Big( \frac{r_i^*(t)}{\lambda} \Big) \equiv \pi_{[-C,C]} \Big( \frac{x_i^*(T) - \bar{\xb}^*(T)}{\lambda} \Big), 
\end{equation}
for all $i \in \{ 1,\dots,N \}$, where $\pi_K : \R \rightarrow U$ is the standard projection onto the closed convex set $U := [-C,C] \subset \R$. It follows directly from this expression that
\begin{equation*}
\dot{\bar{\xb}}^*(t) = \frac{1}{N} \sum_{i=1}^N u_i^*(t) = \frac{1}{N} \sum_{i=1}^N \pi_{[-C,C]} \left( \frac{x_i^*(T) - \bar{\xb}^*(T)}{\lambda} \right), 
\end{equation*}
for all times $t \in [0,T]$. In the following lemma, we derive a simple and explicit necessary and sufficient condition such that \ref{hyp:CO_N} holds for $(\Ppazo^V)$. 

\begin{lem}[Charaterisation of the coercivity condition for $(\Ppazo^V)$]
\label{lem:LambdaConv_PV}
The mean-field coercivity condition \ref{hyp:CO_N} holds for $(\Ppazo^V)$ if and only if $\lambda > T$. In this case, the optimal coercivity constant is given by $\rho_T = \lambda - T$. 
\end{lem}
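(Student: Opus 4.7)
The strategy is to exploit the particularly simple structure of $(\Ppazo^V)$: the mean-field Hamiltonian $\H_N$ in \eqref{eq:ExplicitExp_Hamiltonian} is independent of $\xb$, the running cost is purely quadratic in $\ub$, and the final cost equals minus one half of the empirical variance. Consequently the linearised system in \ref{hyp:CO_N} reduces to $\dot y_i = w_i$ with $y_i(0)=0$, so that $y_i(t) = \int_0^t w_i(s)\,\textnormal{d}s$, and the coercivity inequality becomes a purely quadratic relation between $\wb$ and its antiderivative. The plan is to first compute the three Hessians appearing in \ref{hyp:CO_N} explicitly, then to deduce the sufficient direction by a single application of Cauchy-Schwarz, and finally to saturate that inequality with a well-chosen probing variation.

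For the Hessians, from \eqref{eq:ExplicitExp_Hamiltonian} we read off $\BHess{\xb}\H_N \equiv 0$ and $\BHess{\ub}\H_N[\cdot](\wb,\wb) = -\lambda|\wb|_N^2$. For the final cost $\Bvarphi_N(\xb) = -\tfrac{1}{2N}\sum_{i=1}^N(x_i-\bar{\xb})^2$, applying Proposition \ref{prop:MF_Derivatives} with the easily computed Wasserstein derivatives of $\varphi(\mu) = -\tfrac{1}{2}\int |x - \bar{\mu}|^2\,\textnormal{d}\mu$ (or equivalently computing the Euclidean Hessian of $\Bvarphi_N$ directly) yields
\[
\BHess{\xb}\Bvarphi_N[\xb](\hb,\hb) = -\frac{1}{N}\sum_{i=1}^N (h_i - \bar{h})^2, \qquad \bar{h} := \frac{1}{N}\sum_{j=1}^N h_j.
\]
Substituting these identities into \ref{hyp:CO_N} reduces the coercivity estimate to the scalar inequality
\[
\lambda \int_0^T |\wb(t)|_N^2\,\textnormal{d}t - \frac{1}{N}\sum_{i=1}^N \left(\int_0^T (w_i(s)-\bar{w}(s))\,\textnormal{d}s\right)^{2} \ge \rho_T \int_0^T |\wb(t)|_N^2 \, \textnormal{d} t.
\]
A direct application of Cauchy-Schwarz to each integral, namely $\left(\int_0^T(w_i-\bar{w})\,\textnormal{d}s\right)^2 \le T\int_0^T(w_i-\bar{w})^2\,\textnormal{d}s$, followed by averaging over $i$ and discarding the nonpositive contribution $-\bar{w}(s)^2$, bounds the second summand above by $T\int_0^T |\wb(s)|_N^2\,\textnormal{d}s$, which proves that \ref{hyp:CO_N} holds with $\rho_T = \lambda - T$ as soon as $\lambda > T$.

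Conversely, to obtain both the sharpness of $\rho_T = \lambda - T$ and the necessity of $\lambda > T$, the plan is to evaluate the reduced inequality against $\wb(t) \equiv \mathbf{c} \in \R^N$ constant in time with $\sum_i c_i = 0$. Equality is then attained in Cauchy-Schwarz and the inequality collapses to $T(\lambda - T)|\mathbf{c}|_N^2 \ge \rho_T T |\mathbf{c}|_N^2$, forcing $\rho_T \le \lambda - T$; in particular no positive coercivity constant can exist when $\lambda \le T$. The only delicate point --- and the main obstacle I anticipate --- is admissibility: the probing variation must satisfy $\ub^*_N(t) + \wb(t) \in U^N = [-C,C]^N$ almost everywhere. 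Since $\mu^0 = \tfrac{1}{2}\mathds{1}_{[-1,1]}\Lcal^1$ has bounded support, the explicit formula \eqref{eq:ExplicitExp_OptimalControl} shows that $u_i^*$ stays in the interior of $[-C,C]$ whenever $C$ is large enough relative to $\lambda$ and $T$, so that a small rescaling $s\mathbf{c}$ is automatically admissible and the saturation argument runs through unchanged; in constraint-active regimes one instead selects the components of $\mathbf{c}$ so that their signs point strictly into $U^N$ at each boundary-active index while preserving the zero-mean condition, which yields the same upper bound on $\rho_T$.
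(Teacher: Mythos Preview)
Your argument is correct and follows essentially the same route as the paper: both compute the three Hessians, reduce the linearised system to $\dot y_i = w_i$, obtain the lower bound via Cauchy--Schwarz on $|\yb(T)|_N^2 \leq T\int_0^T|\wb|_N^2$, and saturate with a time-constant zero-mean perturbation. The only place where the paper is more explicit is the admissibility of the probing variation: rather than invoking a ``large $C$'' regime (which is not at your disposal, since $C$ is a fixed datum of $(\Ppazo^V)$), the paper observes directly from \eqref{eq:ExplicitExp_OptimalControl} that the optimal controls always contain two components of opposite sign, and then builds $\wb_c$ supported on exactly those two indices with $(\wb_c)_i = -\sign(u_i^*)\epsilon = -(\wb_c)_j$, which is automatically admissible and zero-mean for small $\epsilon$.
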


\begin{proof}
We first compute the Hessians involved in the coercivity estimate. For any $\xb,\yb,\ub,\wb \in \R^N$, one has
\begin{equation*}
\BHess{\xb} \Var_N[\xb](\yb,\yb) = |\yb|_N^2 - |\bar{\yb}|^2 \leq |\yb|_N^2 \qquad \text{and} \qquad \BHess{\ub} \H_N[t,\xb,\rb,\ub](\wb,\wb) = \lambda |\wb|^2_N. 
\end{equation*}
Let $(\yb(\cdot),\wb(\cdot)) \in W^{1,2}([0,T],\R^N) \times \in L^2([0,T],U^N)$ be the solution of the linearised Cauchy problem along a given optimal pair $(\xb^*_N(\cdot),\ub^*_N(\cdot))$ for $(\Ppazo^V_N)$, which writes
\begin{equation}
\label{eq:linearised_MeanField}
\dot \yb(t) = \wb(t), \qquad \yb(0) = 0, 
\end{equation}
with $\ub^*_N(t) + \wb(t) \in U^N$. By Cauchy-Schwarz's inequality, one can estimate $|\yb(T)|_N^2$ as 
\begin{equation*}
|\yb(T)|_N^2 = \Big| \mathsmaller{\INTSeg{\wb(t)}{t}{0}{T}} \, \Big|_N^2 \leq T \INTSeg{|\wb(t)|_N^2}{t}{0}{T}, 
\end{equation*}
which allows us to recover
\begin{equation*}
\begin{aligned}
- \BHess{\xb} \Var_N[\xb^*_N(T)](\yb(T),\yb(T)) - \INTSeg{\BHess{\ub} \H_N[t,\xb^*_N(t),\rb^*_N(t),\ub^*_N(t)](\wb(t),\wb(t))}{t}{0}{T} & \\
\geq (\lambda - T) \INTSeg{|\wb(t)|_N^2}{t}{0}{T}&.
\end{aligned} 
\end{equation*}
Thus, the mean-field coercivity condition \ref{hyp:CO_N} holds whenever $\lambda > T$.

Conversely, let us choose a constant admissible control perturbation $\wb_c(\cdot) := \wb_c$ such that $\bar{\wb}_c = 0$. It is always possible to make such a choice, since by \eqref{eq:ExplicitExp_OptimalControl}, there exist at least two indices $i,j \in \{1,\dots,N\}$ such that $\sign(u_i^*(t)) = - \sign(u_j^*(t))$ for all times $t \in [0,T]$. It is then sufficient to choose $\wb_c \in [-C,C]^N$ such that 
\begin{equation*}
\left\{
\begin{aligned}
(\wb_c)_i & = - \sign(u_i) \epsilon, \hspace{0.65cm}(\wb_c)_j = -(\wb_c)_i, \\
(\wb_c)_k & = 0 ~~ \text{if $k \in \{1,\dots,N \}$ and $k \neq i,j$}, \\
\end{aligned}
\right.
\end{equation*}
where $\epsilon > 0$ is a small parameter. As a consequence of \eqref{eq:linearised_MeanField}, the corresponding state perturbation $\yb_c(\cdot)$ is such that $\bar{\yb}_c(\cdot) \equiv 0$. Moreover, it also holds that
\begin{equation*}
|\yb_c(T)|_N^2 = T^2 |\wb_c|^2_N = T \INTSeg{|\wb_c|^2_N}{t}{0}{T}. 
\end{equation*}
Therefore, we have shown that this particular linearised trajectory-control pair is such that
\begin{equation*}
\begin{aligned}
& -\BHess{} \Var_N[\xb^*_N(T)](\yb_c(T),\yb_c(T)) \\
& \hspace{0.4cm} - \INTSeg{\BHess{\ub} \H_N[t,\xb^*_N(t),\rb^*_N(t),\ub^*_N(t)](\wb_c(t),\wb_c(t))}{t}{0}{T} = (\lambda - T) \INTSeg{|\wb(t)|_N^2}{t}{0}{T}, 
\end{aligned}
\end{equation*}
so that $\rho_T = \lambda - T$ is the sharp mean-field coercivity constant of $(\Ppazo)$, and \ref{hyp:CO_N} holds if and only if $\lambda > T$. 
\end{proof}

\begin{rmk}[Connection with the sufficient coercivity conditions]
\label{rmk:ConvexLag}
In Lemma \ref{lem:LambdaConv_PV}, we have proven that the sharpest constant depending for $(\Ppazo^V)$ which may serve as a sufficient lower-bound for coercivity via Proposition \ref{prop:SufficientCoercivity} is given by $\lambda_{(\Ppazo^V)} := T$. Performing the same computations in the context of a final variance \textnormal{minimisation}, our sharp constant would be given by $\lambda_{(\Ppazo^V)} := 0$, so that \ref{hyp:CO_N} would hold for every $\lambda > 0$. 
\end{rmk}

We can now use this characterisation of the coercivity condition to show that it is itself equivalent to the uniform Lipschitz regularity in space of the optimal controls. For the sake of computational tractability, we will assume that the initial condition $\xb^0 = (x_1^0,\dots,x_N^0)$ is symmetric with respect to the origin and that $\bar{\xb}^*(\cdot) \equiv 0$.

\begin{prop}[Coercivity and regularity for $(\Ppazo^V)$]
\label{prop:MF_Coercivity}
The following assertions are equivalent.
\begin{enumerate}
\item[\textnormal{(a)}] The mean-field coercivity condition $\lambda > T$ holds. 
\item[\textnormal{(b)}] For any sequence of symmetrically distributed empirical measures $(\mu_N^0) \subset \Pcal_N([-1,1])$ converging narrowly towards $\mu^0 = \tfrac{1}{2} \mathds{1}_{[-1,1]}\Lcal^1$ with associated discrete optimal pairs $(\xb^*_N(\cdot),\ub^*_N(\cdot))$, it holds 
\begin{equation*}
|u_i^*(t)-u_j^*(t)| \leq \frac{1}{\rho_T} |x_i^*(t)-x_j^*(t)|, 
\end{equation*} 
for all times $t \in [0,T]$, where $\rho_T := \lambda - T$ is the sharp coercivity constant of $(\Ppazo^V)$.
\end{enumerate}
\end{prop}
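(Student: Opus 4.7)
The starting point is the explicit closed form \eqref{eq:ExplicitExp_OptimalControl}: once the symmetry assumption $\bar{\xb}^*(\cdot)\equiv 0$ is used, each optimal control is constant in time with
\begin{equation*}
u_i^*(t) \,\equiv\, u_i^* \,=\, \pi_{[-C,C]}\!\left(\tfrac{x_i^*(T)}{\lambda}\right),
\end{equation*}
so that $x_i^*(t) = x_i^0 + t\,u_i^*$ for every $t\in[0,T]$. Both implications will be extracted from this identity together with elementary properties of the scalar projection.

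For $(a)\Rightarrow(b)$, I would first use the $1$-Lipschitz regularity of $\pi_{[-C,C]}$ to obtain
\begin{equation*}
|u_i^* - u_j^*| \,\leq\, \tfrac{1}{\lambda}\,|x_i^*(T) - x_j^*(T)|.
\end{equation*}
Since the controls are constant in time, one has $x_i^*(T)-x_j^*(T) = (x_i^*(t)-x_j^*(t)) + (T-t)(u_i^*-u_j^*)$; inserting this in the previous inequality and rearranging yields
\begin{equation*}
(\lambda - T + t)\,|u_i^* - u_j^*| \,\leq\, |x_i^*(t) - x_j^*(t)|
\end{equation*}
for every $t\in[0,T]$. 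Under (a), the coefficient $\lambda-T+t$ is bounded below by $\rho_T=\lambda-T>0$, which is exactly the content of (b) (with a strict improvement of the constant for $t>0$).

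For the reverse implication, I would argue by contrapositive. Assume $\lambda\leq T$. A direct computation shows that the cost of $(\Ppazo^V)$, evaluated on a single particle starting at $x_0$ with constant admissible control $u$ and under the symmetry constraint $\bar{\xb}^*(T)=0$, reduces to
\begin{equation*}
J(u) \,=\, \tfrac{(\lambda-T)T}{2}\,u^2 \,-\, T\,u\,x_0 \,-\, \tfrac{1}{2}\,x_0^2,
\end{equation*}
which is concave in $u$ (strictly so when $\lambda<T$, linear when $\lambda=T$). Its minimum over $[-C,C]$ is therefore attained at $u=\sign(x_0)\,C$ whenever $x_0\neq 0$; this choice is consistent both with the Hamiltonian maximization $u_i^*=\pi_{[-C,C]}(r_i^*/\lambda)$ and with the preservation of $\bar{\xb}^*(T)=0$ along symmetric configurations. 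Selecting inside the refining sequence $\mu^0_N\rightharpoonup\mu^0$ two symmetric particles $x^0_{i_N}=-x^0_{j_N}=\epsilon_N\downarrow 0$, one then obtains $|u^*_{i_N}-u^*_{j_N}|=2C$ while $|x^*_{i_N}(0)-x^*_{j_N}(0)|=2\epsilon_N\to 0$. Hence no finite constant can realise the estimate appearing in (b), which forces $\rho_T>0$, i.e.\ (a).

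The main obstacle lies in this reverse direction, and precisely in selecting the genuine minimiser of $J(\cdot)$ when $\lambda<T$: the PMP fixed-point equation $u=\pi_{[-C,C]}((x_0+Tu)/\lambda)$ also admits, for $|x_0|<(T-\lambda)C$, the interior critical point $u=x_0/(\lambda-T)$. The resolution is that the negative sign of the coefficient of $u^2$ in $J(\cdot)$ turns this interior point into a \emph{maximum} of the cost, so the global minimiser must be one of the boundary fixed points $\pm C$. This is what produces the sign discontinuity at the origin responsible for the blow-up of the Lipschitz constant.
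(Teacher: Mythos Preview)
Your argument is correct, and the contrapositive direction $(\neg(a)\Rightarrow\neg(b))$ is essentially the paper's own proof: compute the reduced cost, observe that $\lambda\le T$ makes it concave (or affine) so that the minimiser is the bang-bang control $u_i^*=\sign(x_i^0)\,C$, and then exploit the narrow convergence to $\tfrac12\mathds{1}_{[-1,1]}\Lcal^1$ to find opposite-sign particles arbitrarily close to the origin, forcing the Lipschitz constant to blow up at $t=0$. Your remark on the spurious interior PMP fixed point is also well placed.

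The forward direction $(a)\Rightarrow(b)$, however, is genuinely different from the paper and considerably cleaner. The paper partitions the indices into $I_N=\{i:|x_i^0|\le\rho_T C\}$ (interior regime) and $J_N=\{1,\dots,N\}\setminus I_N$ (saturated regime), computes the optimal control explicitly in each regime, and then handles the four pairings $I_N\times I_N$, $J_N\times J_N$, $I_N\times J_N$ with same or opposite signs separately (equations (45)--(48) in the paper). Your argument bypasses all of this: from $u_i^*=\pi_{[-C,C]}(x_i^*(T)/\lambda)$ and the $1$-Lipschitzianity of the projection you get $\lambda|u_i^*-u_j^*|\le|x_i^*(T)-x_j^*(T)|$, and the identity $x_i^*(T)-x_j^*(T)=(x_i^*(t)-x_j^*(t))+(T-t)(u_i^*-u_j^*)$ then gives $(\rho_T+t)|u_i^*-u_j^*|\le|x_i^*(t)-x_j^*(t)|$ in one line. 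This is uniform over all pairs $(i,j)$ and even yields the time-improved constant $\tfrac{1}{\rho_T+t}$, which the paper obtains only for certain pairings. The paper's case analysis buys an explicit description of which agents saturate the control constraint, but for the Lipschitz estimate itself your route is shorter and loses nothing.
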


\begin{proof}
First, suppose that (a) does not hold, i.e. $\lambda \leq T$. Since the optimal controls are constant over $[0,T]$ as a consequence of \eqref{eq:ExplicitExp_OptimalControl} and we assumed that $\bar{\xb}(T) = 0$, the total cost of $(\Ppazo^V_N)$ can be rewritten as
\begin{equation*}
\Ccal(u_1,\dots,u_N) = \frac{1}{2N} \sum_{i=1}^N \Big( T(\lambda - T) u_i^2 - 2 T x_i^0 u_i - |x_i^0|^2 \Big), 
\end{equation*}
for any $N$-tuple $\ub = (u_1,\dots,u_N) \in [-C,C]^N$. Since $\lambda \leq T$, the minimum of $\Ccal$ is achieved by taking $u_i^* = \sign(x_i^0) C$ for all $i \in \{1,\dots,N\}$. This further implies 
\begin{equation*}
|u_i^*(t) - u_j^*(t)| = \left\{
\begin{aligned}
& 0 ~~ & \text{if $\sign(x_i) = \sign(x_j)$},	\\
& 2C ~~ & \text{otherwise}, 
\end{aligned}
\right.
\end{equation*}
so that for any pair of indices such that $\sign(x_i^0) = -\sign(x_j^0)$, it holds
\begin{equation}
\label{eq:NonLip}
|u_i^*(t) - u_j^*(t)| = \frac{2C}{|x_i^0-x_j^0| + 2C t} |x_i^*(t) - x_j^*(t)|.
\end{equation}
The fact that $\mu_N \rightharpoonup^* \mu^0 = \tfrac{1}{2} \mathds{1}_{[-1,1]} \Lcal^1$ as $N \rightarrow +\infty$ implies that for all $\epsilon > 0$, there exists $N_{\epsilon} \geq 1$ such that for any $N \geq N_{\epsilon}$, there exists at least one pair of indices $i,j \in \{1,\dots,N\}$ such that $\sign(x_i^0) = -\sign(x_j^0)$ and $|x_i^0-x_j^0| \leq \epsilon$. Thus, it follows from \eqref{eq:NonLip} that (b) fails to hold for some pairs of indices, at least for small times. 

Suppose now that (a) is true, i.e. $\lambda > T$, and denote by $\rho_T := \lambda - T$ the corresponding sharp coercivity constant. Let $I_N,J_N \subset \{1,\dots,N\}$ be the sets of indices defined respectively by 
\begin{equation*}
I_N = \left\{ i \in \{1,\dots,N\} ~\text{s.t.}~ |x_i^0| \leq \rho_T C \right\}, \qquad J_N = \{1,\dots,N\} \backslash I_N.
\end{equation*}
For $N \geq 1$ sufficiently large, $I_N$ is necessarily non-empty since $\rho_T > 0$ and $(\mu^0_N)$ narrowly converges towards $\mu^0$. Then for any $i \in I_N$, one has that
\begin{equation*}
|x_i^*(T)| \leq |x_i^0| + C T \leq (\rho_T + T)C = \lambda C, 
\end{equation*}
and for any such indices, the optimal controls are given by $u_i^* = \tfrac{1}{\lambda} x_i^*(T)$. In which case, one has  
\begin{equation*}
x_i^*(T) = \frac{x^0_i}{1 - T/\lambda} \qquad \text{and} \qquad u_i^*(t) = \frac{x_i^*(t)}{\rho_T + t}, 
\end{equation*}
so that 
\begin{equation}
\label{eq:Lip1}
|u_i^*(t) - u_j^*(t)| \leq \frac{1}{\rho_T + t} |x_i^*(t)-x_j^*(t)|, 
\end{equation}
for any pair of indices $i,j \in I_N$. It can be checked reciprocally that $u_i^* = \sign(x_i^0) C$ for any $i \in J_N$, which furthermore yields by \eqref{eq:NonLip} that 
\begin{equation}
\label{eq:Lip2}
|u_i^*(t) - u_j^*(t)| \leq \left\{
\begin{aligned}
& 0 ~~ & \text{if $\sign(x_i) = \sign(x_j)$}, \\
& \frac{|x_i^*(t) - x_j^*(t)|}{\rho_T + t} ~~ & \text{otherwise}.
\end{aligned}
\right.
\end{equation}
Indeed, in this case $|x_i^0 - x_j^0| \geq 2 \rho_T C$ whenever $i,j \in J_N$ and $\sign(x_i) = -\sign(x_j)$. Suppose now that we are given a pair of indices $i,j \in \{1,\dots,N \}$ such that $i \in I_N$ and $j \in J_N$. If $\sign(x_i^0) = \sign(x_j^0)$, it holds that 
\begin{equation}
\label{eq:Lip3}
\begin{aligned}
|u_i^*(t) - u_j^*(t)| = u_j^*(t) - u_i^*(t) & = \sign(x_j^0)C - \frac{x_i^*(t)}{\rho_T + t} \\
& = \frac{x_j^*(t) C}{|x_j^*(t)|} - \frac{x_i^*(t)}{\rho_T + t} \leq \frac{x_j^*(t)-x_i^*(t)}{\rho_T} = \frac{|x_i^*(t)-x_j^*(t)|}{\rho_T}, 
\end{aligned}
\end{equation}
since $|x_j^*(t)| \geq \rho_T C$ by definition of $J_N$. Symmetrically if $\sign(x_i^0) = -\sign(x_j^0)$, one can easily show that 
\begin{equation}
\label{eq:Lip4}
|u_i^*(t) - u_j^*(t)| \leq \frac{1}{\rho_T}|x_i^*(t)-x_j^*(t)|. 
\end{equation}
By merging \eqref{eq:Lip1}, \eqref{eq:Lip2}, \eqref{eq:Lip3} and \eqref{eq:Lip4}, we conclude that (b) holds whenever $\lambda > T$.
\end{proof}   

In Proposition \ref{prop:MF_Coercivity}, we have proven that the mean-field coercivity estimate is both necessary and sufficient for the existence of a uniform Lipschitz constant for the sequence of finite-dimensional optimal controls with symmetric initial data. Since we assumed that $(\mu^0_N) \subset \Pcal_N([-1,1])$ and $\mu^0_N \rightharpoonup^* \mu^0 := \tfrac{1}{2} \mathds{1}_{[-1,1]} \Lcal^1$, the fact that the initial distribution are symmetric about the origin holds up to a small error as $N \rightarrow +\infty$. Observing in addition that for any $\mu^0_N \in \Pcal([-1,1])$ the discrete optimal trajectory-control pairs $(\xb^*_N(\cdot),\ub^*_N(\cdot)) \in \Lip([0,T],\R^N) \times \U_N$ are uniquely determined, we conclude that the mean-field coercivity condition \ref{hyp:CO_N} is necessary and sufficient in the limit for the existence of a Lipschitz-in-space optimal control for $(\Ppazo^V)$.  

\medskip

{\footnotesize 
\paragraph*{Acknowledgements:} This research was partially supported by the Padua University grant SID 2018 ``Controllability, stabilizability and infimun gaps for control systems'', prot. BIRD 187147. The first author was supported by the Archim\`ede
Labex (ANR-11-LABX-0033) and by the A*MIDEX project (ANR-11-IDEX-0001-02), funded by the ``Investissements d'Avenir'' French Government program managed by the French National Research Agency (ANR).}


{\footnotesize
\bibliographystyle{plain}
\bibliography{../ControlWassersteinBib}

\end{document}}